\documentclass[sn-mathphys-num]{sn-jnl}
\usepackage{graphicx}%
\usepackage{multirow,longtable}%
\usepackage{amsmath,amssymb,amsfonts}%
\usepackage{amsthm}%
\usepackage{mathrsfs}%
\usepackage[title]{appendix}%
\usepackage{xcolor}%
\usepackage{textcomp}%
\usepackage{manyfoot}%
\usepackage{booktabs}%
\usepackage{algorithm}%
\usepackage{algorithmicx}%
\usepackage{algpseudocode}%
\usepackage{listings}%
\usepackage{tabularx} 
\usepackage{geometry}

\renewcommand{\textcolor}[2]{#2}
\theoremstyle{thmstyleone}%
\newtheorem{theorem}{Theorem}
%
\newtheorem{lemma}{Lemma}%
\theoremstyle{thmstyletwo}%
\theoremstyle{thmstylethree}%
\raggedbottom
\begin{document}

\title{A scalable sequential adaptive cubic regularization algorithm for optimization with general equality constraints}


\author*[1]{\fnm{Yonggang} \sur{Pei}}\email{peiyg@htu.edu.cn}
\author[1]{\fnm{Yubing } \sur{Lin}}\email{linyb17838409600@163.com}
\equalcont{These authors contributed equally to this work.}
\author[1]{\fnm{Shuai} \sur{Shao}}\email{shaoshuai20000525@163.com}
\equalcont{These authors contributed equally to this work.}
\author[2]{\fnm{Mauricio Silva} \sur{Louzeiro}}\email{mauriciolouzeiro@ufg.br}
\equalcont{These authors contributed equally to this work.}
\author[3]{\fnm{Detong} \sur{Zhu}}\email{dtzhu@shnu.edu.cn}
\equalcont{These authors contributed equally to this work.}


\affil*[1]{\orgdiv{School of Mathematics and Statistics}, \orgname{Henan Normal University}, \city{Xinxiang},\orgaddress{\street{} \postcode{453007},~\state{}\country{China}}}
\affil[2]{\orgdiv{Instituto de Matem\'{a}tica e Estat\'{i}stica}, \orgname{Universidade Federal de Goi\'{a}s}, \city{Avenida Esperan\c{c}a},\orgaddress{\street{} \postcode{74690-900},~\state{}\country{Brazil}}}
\affil[3]{\orgdiv{Mathematics and Science College}, \orgname{Shanghai Normal University},\orgaddress{\street{} \city{Shanghai}, \postcode{200234},~\state{}\country{China}}}







\abstract{The scalable adaptive cubic regularization method ($\mathrm{ARC_{q}K}$: Dussault et al. in Math. Program. Ser. A  207(1-2): 191-225, 2024) has been recently proposed for unconstrained optimization. It has excellent convergence properties, well-defined complexity bounds, and promising numerical performance. In this paper, we extend $\mathrm{ARC_{q}K}$ to nonlinear optimization with general equality constraints and propose a scalable sequential adaptive cubic regularization algorithm named $\mathrm{SSARC_{q}K}$. In each iteration, we construct an ARC subproblem with linearized constraints inspired by sequential quadratic optimization methods. Next, a composite-step approach is used to decompose the trial step into the sum of a vertical step and a horizontal step. By means of the reduced-Hessian approach, we rewrite the linearly constrained ARC subproblem as a standard unconstrained ARC subproblem to compute the horizontal step. \textcolor{blue}{Analogous to $\mathrm{ARC_{q}K}$}, we employ a CG-Lanczos procedure with shifts to solve ARC subproblems inexactly, \textcolor{blue}{thus bypassing any hard case consideration. This also avoids solving the subproblem multiple times for obtaining a new iterative point}. We establish the global convergence of the inexact ARC method \textcolor{blue}{$\mathrm{SSARC_{q}K}$ to first-order critical points.} Preliminary numerical tests and \textcolor{blue}{some comparison results} are presented to illustrate the performance of $\mathrm{SSARC_{q}K}$.}
 

\keywords{nonlinear constrained optimization, adaptive regularization with cubics, global convergence}



\maketitle

\section{Introduction}\label{sec1}
\textcolor{blue}{Nonlinear optimization is a subject that is widely and increasingly used in science, engineering, economics, management, industry, and other areas \cite{sunyuanbook2006,Nocedal1999}. Many impressive methods have been developed for solving nonlinear optimization problems. Most of these methods fall into two categories: trust region methods and line search methods (see, e.g., \cite{Nocedal1999,Conn15,Dennis45,Dennis1997SIOPT,Byrd2010MP,Heinkenschloss2014SIOPT,Bergou2021SIAMJSC,Fischer2024COAP,Onwunta2024JSC}). The Adaptive Regularization method with Cubic (ARC), recently explored by Cartis et al. \cite{Cartis11,Cartis12} for solving unconstrained optimization, has been shown to possess excellent global and local convergence properties as the third alternative method. Research on ARC methods has developed rapidly, and a series of ARC-type (or its variants) algorithms have been proposed for unconstrained optimization (see, e.g., \cite{Bellavia41,Benson42,Bergou4,Bianconcini5,Bianconcini6,Birgin7,Cartis43,Martnez2017JGO,Dussault18,Park2020JOTA,2020LiuHongweiNA,Dehghan44,Zhao40}).
For constrained optimization problems, many researchers have also proposed some effective algorithms based on the ARC framework (see, e.g., \cite{Agarwal1,Cartis46,Zhu47} and the related references in \cite{Pei32,Pei33}).} 

\textcolor{blue}{Despite the strong theoretical guarantees of ARC methods mentioned above, their practical performance depends critically on the efficiency of solving cubic regularization subproblems and the additional implementation complexity is needed. Although there are some effective algorithms (see, e.g., \cite{Hsia2017OMS,LiederSubproblem2020,JiangRujun2021,shen2022SMAA}), most of them still need to address the intractable hard-cases like in trust region subproblem in algorithm design. Moreover, to obtain the new iterative point, these methods are likely to require solving the subproblem repeatedly, thus increasing the computational burden of the algorithm. Dussault et al. \cite{Dussault2019INFORMS,Dussault19} presented a scalable adaptive cubic regularization method ($\mathrm{ARC_{q}K}$), where ARC subproblems are solved bypassing any hard case consideration. It mainly solves a set of shifted systems concurrently through an appropriate modification of the Lanczos formulation of the conjugate gradient method, and all shifted systems are solved inexactly. This avoids solving the subproblem repeatedly.  Theory analysis is conducted on its worst-case evaluation complexity, global convergence, and asymptotic convergence \cite{Dussault19}.}

Motivated by the above methods, we aim to extend the $\mathrm{ARC_{q}K}$ method to solve nonlinear optimization with general equality constraints, and propose a scalable sequential adaptive cubic regularization algorithm, which we name $\mathrm{SSARC_{q}K}$. Similar to sequential quadratic optimization methods, we first consider a cubic regularization subproblem with linearized constraints in each iteration. Then, we employ a composite-step approach to handle linearized constraints, where the trial step is decomposed into the sum of the vertical step and the horizontal step in each iteration. The vertical step is used to reduce the constraint violation, and the horizontal step is used to provide sufficient reduction of the model. By reduced-Hessian approach, we construct a standard ARC subproblem to obtain the horizontal step. \textcolor{blue}{We use CG-Lanczos with shifts method, which is tailored from Algorithm 2 in \cite{Dussault19} to solve the ARC subproblem inexactly.} $m$ shift parameters are chosen to compute the horizontal steps, followed by the trial steps. When an iteration is unsuccessful, our algorithm does not need to recompute the trial step, but rather simply turns its attention to the next shift value and the trial step corresponding to the next shift value. We employ the exact penalty function as the merit function. The ratio of the penalty function reduction to the quadratic model reduction is used to assess the acceptance of the trial point. Global convergence \textcolor{blue}{to first-order critical points} is proved under some suitable assumptions.

The remainder of this paper is organized as follows. In Section \ref{sec2}, we describe the step computation and the $\mathrm{SSARC_{q}K}$ algorithm for equality constrained optimization. The global convergence to first-order critical points is proved in Section \ref{sec3}.
Preliminary numerical results and some comparison results are reported in Section \ref{sec4}, and the conclusion is presented in Section \ref{sec6}.

Throughout the paper, $\|\cdot\|$ denotes the Euclidean norm. The inner product of vectors $a,b\in\mathbb{R}^n$ is denoted by $a^T b$ or $\langle a,b \rangle$. 

\section{Development of the algorithm $\mathrm{SSARC_{q}K}$}\label{sec2}
Consider the following nonlinear equality constrained optimization problem
\begin{subequations}\label{NLP}
\begin{align}
{\underset {x\in\mathbb{R}^n}{{\rm minimize}}}   &\ \ \  f(x) \ \\
\mbox{subject to} &\ \ \  c(x)= 0,
\end{align}
\end{subequations}
where $f: \mathbb{R}^n \rightarrow \mathbb{R}$ and $c: \mathbb{R}^n \rightarrow \mathbb{R}^p$ are sufficiently smooth functions and \textcolor{blue}{$p\leq n$.}

For this problem, we define its Lagrangian function as
\begin{eqnarray*}
 L(x,s):=f(x)-s^Tc(x),
\end{eqnarray*}
where $s$ is the vector of Lagrange multipliers.\\
We use $J(x)$ to denote the Jacobian matrix of $c(x)$ at $x$, that is,
\begin{equation*}
  J(x)^T=[\nabla c_1(x), \nabla c_2(x), \cdot\cdot\cdot , \nabla c_p(x)],
\end{equation*}
where $c_i(x)$ $(i=1,\cdot\cdot\cdot,p)$ is the $i$th component of $c(x)$. Additionally, $g(x)$ denotes the gradient of $f$ at $x$.


Inspired by SQP methods \textcolor{blue}{(see, e.g., \cite{Wilson1963SQP,Han1976MP,Byrd1987SIAMna,Boggs1995ActaNumer,Gould2005ActaNumer,Gill2005SIAMRev}),}
we consider the following cubic regularization subproblem with linearized constraints in each iteration:
\begin{subequations}\label{constrained subp}
\begin{align}
{\underset {d\in\mathbb{R}^n}{\mbox{minimize}}} &\ \ \  f_k+g_k^Td+\frac{1}{2}d^TB_kd+\frac{1}{3}\beta_k^{-1}\|d\|^3  \\
\mbox{subject to} &\ \ \  J_kd+c_k=0,
\end{align}
\end{subequations}
where $f_k:=f(x_k)$, $g_k:=\nabla f(x_k)$, $J_k:=J(x_k)$, $c_k:=c(x_k)$, $B_k$ denotes the Hessian $\nabla_{xx}L(x_k,s_k)$ or its approximation, and ${\beta _k} \in {\mathbb{R}^ + } = \left\{ {a \in \mathbb{R}\left| {a > 0} \right.} \right\}$ is an adaptive parameter. Since the linearized constraints may be inconsistent, we assume that ${J_k}$ has full row rank for all $k$.

We do not directly solve subproblem \eqref{constrained subp} but instead adopt the composite-step approach. In each iteration $k$, we decompose the overall trial step $d_k$ into a vertical step $v_k$ and a horizontal step $h_k$, that is,
\begin{equation}\label{dkdef}
  d_k:=v_k+h_k,
\end{equation}
where the role of the vertical step  $v_k$  is to improve feasibility, while the role of the horizontal step $h_k$ is to achieve a sufficient reduction in the model of the objective function.

Since $h_k$ is used to ensure the sufficient decrease in the objective function's model, the maximum size of $v_k$ should be restricted. It is generated as follows.
\begin{eqnarray}\label{nk}
  v_k=\alpha_kv_k^c,
\end{eqnarray}
where
\begin{eqnarray*}
v_k^c=-J_k^T(J_kJ_k^T)^{-1}c_k
\end{eqnarray*}
and
\begin{eqnarray}\label{aa1h}
\alpha_k\in
\left[{\rm min}\left\{1,\frac{\theta\sqrt{\beta _k}}{\|v_k^c\|}\right\},
{\rm min}\left\{1,\frac{\sqrt{\beta _k}}{\|v_k^c\|}\right\}
\right],~~~{\rm for~}\theta\in (0,1].
\end{eqnarray}
Then, the horizontal step $h_k$ is computed as
\begin{eqnarray}\label{tn}
  h_k=Z_ku_k,
\end{eqnarray}
\textcolor{blue}{where the columns of $Z_k$ form an orthonormal basis for the null space of $J_k$ (so that $J_k Z_k=0$)} and $u_k$ is the solution (or its approximation) of the following standard ARC subproblem
\begin{equation}\label{tk}
{\underset {u}{\mbox{minimize}}}\ \ \  \langle g^Z_k,u\rangle+\frac{1}{2}\langle u,B^Z_ku\rangle+\frac{1}{3}\beta_k^{ - 1}\|u\|^3,
\end{equation}
where 
\begin{equation}\label{Bzkgzkdef}
B^Z_k:=Z^T_kB_kZ_k, g^Z_k:=Z^T_k(g_k+B_kv_k).
\end{equation}

Then, by \cite{Cartis11}, $u_k$ is a global minimizer of \eqref{tk} if and only if
\begin{subequations}\label{TKM}
\begin{align}
g^Z_k+(B^Z_k+\lambda_kI)u_k &=0,\\
B^Z_k+\lambda_kI &\succeq0,\\
\lambda_k &=\|u_k\|/\beta_k.\label{7c}
\end{align}
\end{subequations}
\textcolor{blue}{We do not solve \eqref{tk} by \eqref{TKM} directly, but obtain an approximate minimizer of \eqref{tk} by solving \eqref{TKm}, which is an approximation of \eqref{TKM} as follows.}
\begin{subequations}\label{TKm}
\begin{align}
g^Z_k+(B^Z_k+\lambda_kI)u_k &=r_k,\label{8a}\\
u_k^T(B^Z_k+\lambda_kI)u_k &\geq0,\label{8b}\\
\frac{1}{t}\frac{\|u_k\|}{\beta_k}\leq\lambda_k &\leq t\frac{\|u_k\|}{\beta_k},\label{8c}
\end{align}
\end{subequations}
where $r_k$ is a residual and $t\geq 1$ is a sampling parameter.

Algorithm 2 in \cite{Dussault19} presents the implementation of CG-Lanczos with shifts for a generic symmetric system \textcolor{blue}{$Ax=b$} with shifts $\lambda_i$, that is
$$\textcolor{blue}{(A+\lambda_iI)x=b, ~~~i=0,...,m.}$$
 We adapt Algorithm 2 in \cite{Dussault19} to solve \eqref{TKm}.

 Consistent with Algorithm 2 in \cite{Dussault19}, we select shift parameters  $\lambda_i$ to satisfy 
 \begin{equation}\label{shiftbd}
 10^{-e_L}\leq\lambda_i\leq10^{e_U}, i=0,\ldots,m,
 \end{equation} 
where $e_L,e_U\sim15$.  Our implementation also uses $e_L=10$ and $e_U=20$. Set $\lambda_{i+1}=\psi\lambda_i$, for $\psi=10^{\frac{1}{2}}$ and $\lambda_0=10^{-\frac{e_L}{2}}$.
Then we get the $e_L+e_U+1$ values $\lambda_i$.

 Next, we impose accuracy requirements on the residual $r_k$ in \eqref{8a} as follows.
 \begin{eqnarray}\label{rk}
 \|r_k\|\leq \xi{\min}\{\|g_k^Z\|, \|u_k\|\}^{1+\zeta},
\end{eqnarray}
where $\xi>0$ and $0<\zeta\leq1$. This is the same as the implementation of Algorithm 2 in \cite{Dussault19}.



We now present the specific algorithm as follows.
\begin{algorithm}[h]
\caption{Lanczos-CG with shifts for \eqref{TKm}}\label{Alg1}
\begin{algorithmic}[1]
\For{$i\leftarrow0,1,2,...,m$}
            \State $j=0.$
            \State ${\rm Set}~u_j=0, \mu_j\upsilon_j=-g_k^Z, p_j=-g_k^Z.$
            \State ${\rm Set}~ \upsilon_{j-1}=0, \sigma_j=\mu_j, \omega_{j-1}=0, \gamma_{j-1}=1.$ 
            \While {$\|g_k^Z+(B_k^Z+\lambda_iI)u_j\|>\xi\min\{\|g_k^Z\|, \|u_j\|\}^{1+\zeta}$}
            \State $\delta_j=\upsilon^T_jB_k^Z\upsilon_j$
            \State $\mu_{j+1}\upsilon_{j+1}=B_k^Z\upsilon_j-\delta_j\upsilon_j-\mu_j\upsilon_{j-1}$
            \State $\delta_j=\delta_j+\lambda_i$
            \State $\gamma_j=1/(\delta_j-\omega_{j-1}/\gamma_{j-1})$
            \If{$\gamma_j < 0$}
            \State ${\rm break}$;
             \EndIf
            \State $\omega_j=(\mu_{j+1}\gamma_j)^2$
            \State $\sigma_{j+1}=-\mu_{j+1}\gamma_j\sigma_j$
            \State $u_{j+1}=u_j+\gamma_jp_j$
            \State $p_{j+1}=\sigma_{j+1}\upsilon_{j+1}+\omega_jp_j$
            \State $j=j+1$
            \EndWhile
             \EndFor
\end{algorithmic}
\end{algorithm}

In Algorithm \ref{Alg1}, when $\gamma_j<0$, the solution of the $i$th system $(B_k^Z+\lambda_iI)u_j=-g_k^Z$ is interrupted, which indicates that $B_k^Z+\lambda_iI\nsucceq0$. Conversely, if $\gamma_j\geq0$, the solution of the $i$th system is terminated as soon as the residual norm $\|r_j\|=\|g_k^Z+(B_k^Z+\lambda_iI)u_j\|$ satisfies \eqref{rk}. $\mu_j>0$ such that $\|\upsilon_j\|=1$;  $\mu_{j+1}>0$ such that $\|\upsilon_{j+1}\|=1$.

Line 5 and lines 10-12 in Algorithm \ref{Alg1} are different from Algorithm 2 in \cite{Dussault19}. Adding these lines makes the algorithm's stopping criterion more explicit.


After obtaining the vertical step $v_k$ and the horizontal step $h_k$, we get the trial step $d_k$. Next, we use a merit function to determine whether the trial step $d_k$ should be accepted. Here, we use $l_2$ penalty function as the merit function, defined as follows.
\begin{equation}\label{phidef}
  \phi(x,\mu):=f(x)+\mu\|c(x)\|.
\end{equation}
Since the descent ratio in our algorithm is calculated using a quadratic model, the quadratic model of \eqref{phidef} is given by
\begin{eqnarray}\label{phimodelqdef}
   q(x,B,\mu,d):=f(x)+g(x)^Td+\frac{1}{2}d^TBd+\mu\|c(x)+J(x)d\|.
\end{eqnarray}
For convenience, define
\begin{eqnarray}
  q^F(x,B,v)&:=&f(x)+g(x)^Tv+\frac{1}{2}v^TBv,\label{qFdef} \\
  q^N(x,v)&:=&\|c(x)+J(x)v\|,\label{qNdef}\\
  q^H(x,B,h)&:=&(g(x)+Bv)^Th+\frac{1}{2}h^TBh.\label{qHdef}
\end{eqnarray}
Define the overall model decrease from $0$ to $d_k$ as
\begin{equation*}
  \Delta q_k:=q(x_k,B_k,\mu_k,0)-q(x_k,B_k,\mu_k,d_k).
\end{equation*}
Then
\begin{equation*}
\Delta q_k=\Delta q^H_k+\mu_k\Delta q^N_k+\Delta q^F_k,
\end{equation*}
where
\begin{equation}\label{deltaqNdef}
 \Delta q^N_k:=q^N(x_k,0)-q^N(x_k,v_k),
\end{equation}
\begin{equation}\label{deltaqHdef}
 \Delta q^H_k:=q^H(x_k,B_k,0)-q^H(x_k,B_k,h_k),
\end{equation}
and
\begin{equation}\label{deltaqFdef}
 \Delta q^F_k:=q^F(x_k,B_k,0)-q^F(x_k,B_k,v_k).
\end{equation}
Note that $\Delta q^N_k$ and $\Delta q^H_k$ must be positive (see Lemma \ref{lem3.1} and Lemma \ref{lem3.2}).

To obtain a significant decrease, we require that
\begin{equation}\label{deltam}
\Delta q_k =\Delta q^H_k+\mu_k\Delta q^N_k+\Delta q^F_k\geq\nu\mu_k\Delta q^N_k
\end{equation}
for some $\nu\in(0,1)$ and $\mu_k$ sufficiently large.
\textcolor{blue}{Here, we follow the rules in \cite{Vardi36} to update $\mu_k$.}
First, compute
\begin{eqnarray}\label{muc}
  \mu^c_k=-\frac{\Delta q^F_k+\Delta q^H_k}{(1-\nu)\Delta q^N_k}
\end{eqnarray}
as the smallest value satisfying  \eqref{deltam}.
Suppose that $\mu_k$ is the penalty parameter at the previous iteration.

Then $\mu_k$ is updated by
\begin{eqnarray}\label{miuk}
       \mu_k = \left\{
      \begin{array}{lll}
       {\rm max}\{\mu_k^c, \tau_1\mu_{k-1}, \mu_{k-1}+\tau_2\},&~~~{\rm if}~\mu_{k-1}<\mu_k^c,\\
       \mu_{k-1},&~~~{\rm otherwise},
      \end{array}
      \right.
\end{eqnarray}
where $\tau_1> 1,\tau_2>0$ are constants.

In order to determine whether the trial step $d_k$ is accepted, we calculate the descent ratio. The descent ratio is defined as follows.
\begin{eqnarray}\label{rhok1}
  \rho_k:=\frac{\phi(x_k,\mu_k)-\phi(x_k+d_k,\mu_k)}{q(x_k,B_k,\mu_k,0)-q(x_k,B_k,\mu_k,d_k)}.
\end{eqnarray}

Based on the above discussion, we summarize the scalable sequential adaptive cubic regularization algorithm  (Algorithm \ref{Alg2}: $\mathrm{SSARC_qK}$)  for nonlinear equality constrained optimization as follows.

\begin{algorithm}[h]
\caption{$\mathrm{SSARC_qK}$ for equality-constained optimization}\label{Alg2}
\begin{algorithmic}[1]
  \State Initialize $x_0 \in \mathbb{R}^n, \beta _0 > 0, 0 < \eta_1 < \eta _ 2< 1, 0 < \gamma _1 < 1 < \gamma _2, \mu_{ - 1} > 0, 0 < \nu  < 1, \tau _1 > 1, \tau _2 > 0,$ $\xi>0$, $0<\zeta\leq1$, $\lambda.$ Set $k=0.$
        \While{$x_k$ is not a KKT point}
            \State Compute a vertical step ${v_k}$ by \eqref{nk}.
            \State Solve $(B_k^Z + \lambda I){u}(\lambda ) =  - g_k^Z$ for ${u}(\lambda )$ by using Algorithm \ref{Alg1}.
            \State $d(\lambda) = {v_k} + {Z_k}{u}(\lambda )$.
            \State Set flag = 0.
            \State ${i^ + } = \min \left\{ {0 \le i \le m\left| {B_k^Z + {\lambda _i}I \succ 0} \right.} \right\}$.
            \State $j = \arg \min \left\{ {\left| {\beta {\lambda _i} - \left\| {{u}({\lambda _i})} \right\|} \right|\left| {{i^ + } \le i \le m} \right.} \right\}$.
        \While{flag = 0}
            \State ${d_k} = d({\lambda _j})$.
            \State Compute $\mu^c_k$ by \eqref{muc} and update $\mu_k$ from \eqref{miuk}.
            \State Compute the decent ratio ${\rho _k}$ by \eqref{rhok1}.
              \If{${\rho _k} < {\eta _1}$}
            \State ${\beta _{k + 1}} = {\beta _k}$.
                \While{${\beta _{k + 1}} > {\gamma _1}{\beta _k}$}
            \State ${\beta _{k + 1}} = \left\| {{u_{j + 1}}} \right\|/{\lambda _{j + 1}}$.
            \State Set $j = j + 1$.
                \EndWhile
              \Else
            \State Set flag = 1.
            \State ${x_{k + 1}} = {x_k} + {d_k}$.
            \State Update ${B_{k + 1}}$.
                \If{${\rho _k} > {\eta _2}$}
            \State ${\beta _{k + 1}} = {\gamma _2}{\beta _k}$.
                \Else
            \State ${\beta _{k + 1}} = {\beta _k}$.
                \EndIf
              \EndIf
        \EndWhile
            \State Set $k = k + 1$.
        \EndWhile
\end{algorithmic}
\end{algorithm}

In $\mathrm{SSARC_qK}$, we utilize Algorithm \ref{Alg1} to solve the systems at line 4. At line 7, we let $i^+$ be the index of the smallest shift for which no negative curvature is detected. Consequently, no negative curvature is detected for any $i^+\leq i \leq m$, which also implies that $B^Z_k+\lambda_iI \succ 0$ for any $i^+\leq i\leq m$. If $u_*\approx u(\lambda_i)$ denotes the solution identified by Algorithm \ref{Alg1} corresponding to $\lambda_i$, we retain the one that most closely satisfies $\beta_k\lambda_i=\|u_*\|$ at line 8. These steps ensure that the solution $u$ we obtain is an approximate global minimizer of \eqref{tk}. 

When an iteration is unsuccessful, Algorithm \ref{Alg2} does not need to recalculate $u$. Instead, it simply shifts its attention to the next shift value and the associated search direction, which has already been computed. For a given $\lambda$, an exact solution $u(\lambda)$ would satisfy condition \eqref{7c}. If $\lambda_k$ is the shift used at iteration $k$ and the iteration is unsuccessful, we search for the smallest $\lambda_j > \lambda_k$, i.e., the smallest $j>k$ such that $\beta(\lambda_j):= \|u_j\|/\lambda_j \leq \gamma_1\beta_k$. If this search were to fail, it would indicate that our sample of shifts does not contain sufficiently large values. 

\textcolor{blue}{Most of the operations in the above two paragraphs are the same as Algorithm 3 in \cite{Dussault19} since $\mathrm{SSARC_qK}$ is the extension of Algorithm 3 in \cite{Dussault19} to constrained optimization. Moreover, for clarity and ease of understanding, we have made the descriptions of $\mathrm{SSARC_qK}$ in this paper and Algorithm 3 in \cite{Dussault19} consistent. The main differences between them lie in the handling of constraints and the convergence analysis. This is specifically reflected in lines 3, 5, 10, 11, 12, 22 of $\mathrm{SSARC_qK}$ and in the theoretical analysis in Section 3.2 and Section 3.3.}
\section{Global convergence}\label{sec3}
In this section, we shall prove that the iterates generated by Algorithm \ref{Alg2} ($\mathrm{SSARC_{q}K}$) are globally convergent to a first-order critical point for the problem \eqref{NLP}. First of all, we state some assumptions necessary for the global convergence analysis. Some preliminary results are given in Subsection \ref{Preliminary results}. The feasibility and the optimality are shown in the remaining two subsections.

Let $\{x_k\}$ be the infinite sequence generated by $\mathrm{SSARC_{q}K}$, where we suppose that the algorithm does not stop at a KKT point. Now, we make some common assumptions as follows.\\
\\
  $\bf(A1)$~~The function $f(x)$ is uniformly bounded from below. The functions $g(x)$ and $c(x)$ are
  uniformly bounded.\\ 
  \\
  $\bf(A2)$~~$g(x)$, $c(x)$ and $\nabla c_i(x)$ are Lipschitz continuous in some open set $\mathcal{C}\supseteq \{x_k\}$.\\
  \\
 $\bf(A3)$~~There exists a constant $\kappa_{\rm B}\geq0$ so that $ \|B_k\| \leq \kappa_{\rm B}$ for all $k$.\\
 \\
 $\bf(A4)$~~There exists a constant $\kappa_{\rm n}>0$ so that for all $k$,
  \begin{eqnarray}\label{aa1g1}
   \|v_k^c\| \leq \kappa_{\rm n}\|c(x_k)\|. \label{nkc}
   \end{eqnarray}
\subsection{Preliminary results}\label{Preliminary results}
We have the following result about $\Delta q_k^N$ after the vertical step $v_k$ is calculated by \eqref{nk}.\\
\begin{lemma}\label{lem3.1}
\textcolor{blue}{Let Assumption $\mathrm{(A4)}$ hold.} 
Then
\begin{eqnarray}\label{lem1}
  {\rm min}\left\{\|c(x_k)\|, \frac{\theta\sqrt{\beta _k}}{\kappa_{\rm n}}\right\}\leq \Delta q_k^N \leq  \|c(x_k)\|
\end{eqnarray}
for all $k\geq0$.
\end{lemma}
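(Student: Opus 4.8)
The plan is to reduce everything to a single clean identity for $\Delta q_k^N$ and then read off the two bounds. First I would unfold the definitions: since $q^N(x_k,0)=\|c_k\|$ and $q^N(x_k,v_k)=\|c_k+J_kv_k\|$, the quantity $\Delta q_k^N$ depends only on how the vertical step acts on the linearized constraint. The key observation is that $v_k=\alpha_k v_k^c=-\alpha_k J_k^T(J_kJ_k^T)^{-1}c_k$, so multiplying by $J_k$ and cancelling $J_kJ_k^T(J_kJ_k^T)^{-1}=I$ (this uses the standing full-row-rank assumption on $J_k$) gives $J_kv_k=-\alpha_k c_k$. Hence $c_k+J_kv_k=(1-\alpha_k)c_k$. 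Because the interval in \eqref{aa1h} forces $\alpha_k\in(0,1]$, we have $1-\alpha_k\ge 0$ and therefore
\begin{equation*}
\Delta q_k^N=\|c_k\|-\|(1-\alpha_k)c_k\|=\|c_k\|-(1-\alpha_k)\|c_k\|=\alpha_k\|c(x_k)\|.
\end{equation*}

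With this identity in hand the upper bound is immediate: since $\alpha_k\le 1$ we get $\Delta q_k^N=\alpha_k\|c(x_k)\|\le\|c(x_k)\|$, which is the right-hand inequality of \eqref{lem1}.

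For the lower bound I would use the left endpoint of the interval \eqref{aa1h}, namely $\alpha_k\ge\min\{1,\theta\sqrt{\beta_k}/\|v_k^c\|\}$, and split into two cases according to which term attains this minimum. If the minimum equals $1$, then $\alpha_k=1$ and $\Delta q_k^N=\|c(x_k)\|$, which certainly dominates $\min\{\|c(x_k)\|,\theta\sqrt{\beta_k}/\kappa_{\mathrm n}\}$. If instead the minimum equals $\theta\sqrt{\beta_k}/\|v_k^c\|$, then $\alpha_k\ge\theta\sqrt{\beta_k}/\|v_k^c\|$ and I would invoke Assumption $\mathrm{(A4)}$ in the form $\|c(x_k)\|/\|v_k^c\|\ge 1/\kappa_{\mathrm n}$ to obtain
\begin{equation*}
\Delta q_k^N=\alpha_k\|c(x_k)\|\ge\frac{\theta\sqrt{\beta_k}}{\|v_k^c\|}\,\|c(x_k)\|\ge\frac{\theta\sqrt{\beta_k}}{\kappa_{\mathrm n}}.
\end{equation*}
In either case $\Delta q_k^N\ge\min\{\|c(x_k)\|,\theta\sqrt{\beta_k}/\kappa_{\mathrm n}\}$, completing the left-hand inequality.

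Since the chain of manipulations is essentially algebraic, there is no genuine obstacle here; the only point requiring care is the bookkeeping in the case analysis for the lower bound, in particular making sure that $\mathrm{(A4)}$ is applied in the correct direction (as a lower bound on $\|c(x_k)\|/\|v_k^c\|$) and that the degenerate situation $c(x_k)=0$, where $v_k^c=0$ and $\Delta q_k^N=0$, is consistent with both sides of \eqref{lem1} vanishing.
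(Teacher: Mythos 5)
Your proposal is correct and takes essentially the same route as the paper's proof: both derive the identity $\Delta q_k^N=\alpha_k\|c(x_k)\|$ from $J_kv_k^c=-c_k$, use $\alpha_k\le 1$ for the upper bound, and combine the left endpoint of \eqref{aa1h} with Assumption $\mathrm{(A4)}$ in the form $\|c(x_k)\|/\|v_k^c\|\geq 1/\kappa_{\rm n}$ for the lower bound. Your explicit two-case split and the remark on the degenerate case $c(x_k)=0$ are merely a more detailed bookkeeping of the identical argument.
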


\begin{proof}
According to the definition of $q^N(x,v)$ in \eqref{qNdef}, the definition of $\Delta q_k^N$ in \eqref{deltaqNdef}, and \eqref{nk}, we have that
\begin{eqnarray}\label{l11}
\begin{aligned}
  \Delta q_k^N &= \|c(x_k)\|-\|c(x_k)+J_kv_k\| \\
  &= \|c(x_k)\|-\|c(x_k)+\alpha_k(J_kv_k^c+c(x_k))-\alpha_kc(x_k)\| \\
  &= \|c(x_k)\|-(1-\alpha_k)\|c(x_k)\| \\
  &= \alpha_k\|c(x_k)\|.
  \end{aligned}
\end{eqnarray}
We obtain that $0<\alpha_k \leq1$ from \eqref{aa1h}. Thus we get that
\begin{eqnarray*}
  \Delta q_k^N=\alpha_k\|c(x_k)\|\leq \|c(x_k)\|.
\end{eqnarray*}
The right side of the inequality in \eqref{lem1} holds.

From \eqref{aa1h} and \eqref{l11}, we get that
\begin{eqnarray*}
  \Delta q_k^N=\alpha_k\|c(x_k)\|\geq \mbox{min}\left\{\|c(x_k)\|, \frac{\theta\|c(x_k)\|\sqrt{\beta _k}}{\|v_k^c\|}\right\}.
\end{eqnarray*}
Assumption $\mathrm{(A4)}$ implies that
$
  \frac{\|c(x_k)\|}{\|v_k^c\|}\geq \frac{1}{\kappa_{\rm n}}.
$
Thus, we can get that
\begin{eqnarray*}
  \Delta q_k^N \geq {\rm min}\left\{\|c(x_k)\|, \frac{\theta\sqrt{\beta _k}}{\kappa_{\rm n}}\right\}.
\end{eqnarray*}
The left side of the inequality in \eqref{lem1} holds. Then, we complete the proof of the lemma. 
\end{proof}

The following result shows that the vertical step is bounded because the total reduction of the vertical model is bounded.

\begin{lemma}\label{lem3.7}
\textcolor{blue}{Let Assumption $\mathrm{(A4)}$ hold.} Then there exists a constant $\kappa_{\rm mn}>0$ such that
\begin{eqnarray}\label{kbon}
 \kappa_{\rm mn}\|v_k\|\leq\Delta q^N_k \leq\|c(x_k)\|
\end{eqnarray}
for all $k\geq0$.
\end{lemma}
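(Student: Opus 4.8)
The right-hand inequality $\Delta q^N_k \le \|c(x_k)\|$ is already established in Lemma~\ref{lem3.1}, so the entire task reduces to producing the lower bound $\kappa_{\rm mn}\|v_k\| \le \Delta q^N_k$. My plan is to exploit the fact that both $\|v_k\|$ and $\Delta q^N_k$ are the \emph{same} scalar multiple $\alpha_k$ of quantities that Assumption (A4) already relates, so that $\alpha_k$ simply cancels.

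First I would recall the two identities that are effectively available from the construction of the vertical step. From the definition \eqref{nk} we have $v_k = \alpha_k v_k^c$, hence $\|v_k\| = \alpha_k\|v_k^c\|$. From the chain of equalities \eqref{l11} in the proof of Lemma~\ref{lem3.1} we have $\Delta q^N_k = \alpha_k\|c(x_k)\|$. The crucial observation is that the regularization-dependent factor $\alpha_k \ge 0$ appears in both expressions. I would then invoke Assumption (A4), namely $\|v_k^c\| \le \kappa_{\rm n}\|c(x_k)\|$, and multiply it through by $\alpha_k$ to obtain
\[
\|v_k\| = \alpha_k\|v_k^c\| \le \kappa_{\rm n}\,\alpha_k\|c(x_k)\| = \kappa_{\rm n}\,\Delta q^N_k.
\]
Setting $\kappa_{\rm mn} = 1/\kappa_{\rm n}$ yields the desired left-hand inequality, and combining it with the upper bound carried over from Lemma~\ref{lem3.1} completes the two-sided estimate.

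I do not anticipate a genuine obstacle: the argument is essentially the single algebraic cancellation above, so the only content is recognizing that $\Delta q^N_k$ and $\|v_k\|$ share the factor $\alpha_k$. The one point deserving a word of care is the degenerate feasible case $c(x_k) = 0$, in which $v_k^c = 0$, $v_k = 0$ and $\Delta q^N_k = 0$; there the inequality holds trivially as $0 \le 0$, and the ratio $\theta\sqrt{\beta_k}/\|v_k^c\|$ in \eqref{aa1h} should be read under the usual convention that the $\min$ then selects $1$. It is also worth noting that, although the statement invokes (A1)--(A3), the proof in fact uses only (A4) together with the step construction \eqref{nk}--\eqref{aa1h}; these blanket hypotheses are presumably stated for uniformity with the later convergence results.
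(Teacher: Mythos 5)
Your proof is correct, and it takes a genuinely different (and in fact shorter) route than the paper's. The paper proves the lower bound by a case split on the regularization parameter, driven by the min-form estimate \eqref{lem1} of Lemma \ref{lem3.1}: when $\beta_k\leq(\kappa_{\rm n}\|c(x_k)\|/\theta)^2$ it combines $\Delta q^N_k\geq\theta\sqrt{\beta_k}/\kappa_{\rm n}$ with the bound $\|v_k\|\leq\sqrt{\beta_k}$ coming from \eqref{nk} and \eqref{aa1h}, while in the opposite case it combines $\Delta q^N_k\geq\|c(x_k)\|$ with $\|v_k\|\leq\kappa_{\rm n}\|c(x_k)\|$ from \eqref{nkc}; both branches deliver $\kappa_{\rm mn}=\theta/\kappa_{\rm n}$. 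You instead go back to the exact identity $\Delta q^N_k=\alpha_k\|c(x_k)\|$ established in \eqref{l11} and cancel the common factor $\alpha_k$ against $\|v_k\|=\alpha_k\|v^c_k\|$, so that a single application of (A4) gives $\|v_k\|\leq\kappa_{\rm n}\Delta q^N_k$. This collapses the paper's two-case analysis into one line, and it yields the sharper constant $\kappa_{\rm mn}=1/\kappa_{\rm n}\geq\theta/\kappa_{\rm n}$, independent of $\beta_k$ and $\theta$ --- the case distinction in the paper is, on this view, an artifact of using only the min-form conclusion of Lemma \ref{lem3.1} rather than the underlying identity. Two minor remarks: your argument quotes \eqref{l11}, which is a display inside the proof of Lemma \ref{lem3.1} rather than part of its statement, so in a self-contained write-up you should re-derive it (one line, using $J_kv^c_k=-c_k$ and $0<\alpha_k\leq1$); and your treatment of the degenerate case $c(x_k)=0$, where $v^c_k=v_k=0$ and $\Delta q^N_k=0$, matches what the paper does explicitly at the start of its own proof. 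Your observation that only (A4) and the step construction \eqref{nk}--\eqref{aa1h} are actually needed is accurate of the paper's proof as well.
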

\begin{proof}
We shall prove this in two cases for \eqref{kbon}.

Consider the first possibility that $c(x_k)=0$. We can obtain that $v_k=0$ from \eqref{nk} and Assumption $\mathrm{(A4)}$ implying \eqref{nkc}. Then, the conclusion \eqref{kbon} is obviously true.

Consider the case $c(x_k)\neq0$. Lemma \ref{lem3.1} gives that
\begin{eqnarray}\label{mkN}
 \Delta q^N_k \geq {\rm min}\left\{\|c(x_k)\|, \frac{\theta\sqrt{\beta _k}}{\kappa_{\rm n}}\right\}.
\end{eqnarray}
If $\beta _k \leq \left(\frac{\kappa_{\rm n}\|c(x_k)\|}{\theta}\right)^{2}$, 
it follows from \eqref{mkN} that
$
 \Delta q^N_k \geq \frac{\theta\sqrt{\beta _k}}{\kappa_{\rm n}}.
$

From \eqref{nk} and \eqref{aa1h}, we have that
$
\|v_k\| \leq \sqrt{\beta _k}.
$
Therefore, we can get that
\begin{eqnarray}\label{sigmamkn1}
 \Delta q^N_k \geq \frac{\theta}{\kappa_{\rm n}}\|v_k\| = \kappa_{\rm mn}\|v_k\|
\end{eqnarray}
with $\kappa_{\rm mn}:=\frac{\theta}{\kappa_{\rm n}}$.

If
$
 \beta _k > \left(\frac{\kappa_{\rm n}\|c(x_k)\|}{\theta}\right)^{2},
$
we have from \eqref{mkN} that
\begin{eqnarray}\label{mkN2}
  \Delta q^N_k \geq\|c(x_k)\|.
\end{eqnarray}
Since $0<\alpha_k\leq1$ from \eqref{aa1h},  \eqref{nk} and \eqref{nkc}  yield
\begin{eqnarray}\label{star}
 \|v_k\|=\alpha_k\|v^c_k\|\leq\|v^c_k\|\leq\kappa_{\rm n}\|c(x_k)\|.
\end{eqnarray}
\eqref{star} and \eqref{mkN2} imply that
\begin{eqnarray}\label{sigmamkn2}
  \Delta q^N_k \geq\frac{1}{\kappa_{\rm n}}\|v_k\|\geq\frac{\theta}{\kappa_{\rm n}}\|v_k\|=\kappa_{\rm mn}\|v_k\|
\end{eqnarray}
since $0<\theta\leq1$. Then, both \eqref{sigmamkn1} and \eqref{sigmamkn2} hold, and we already get that $\Delta q^N_k\leq\|c(x_k)\|$ from \eqref{lem1}. Therefore, the lemma is proved.
\end{proof}

We now turn to the horizontal step $h_k$ computed by \eqref{tn} and satisfying \eqref{rk}. 
The following lemma presents a lower bound of the decrease brought by horizontal step $h_k$. We need a stronger condition on $\beta_k$ as follows. 
\begin{equation}\label{bdbetak}
\beta_k\leq \beta_{\rm max}, k\geq 0
\end{equation}
for some constant $\beta_{\rm max}>0$. Note that imposing a bound on the regularization is standard practice for adaptive regularization methods \cite{Cartis12,Birgin2017MP}.
\begin{lemma}\label{lem3.2}
\textcolor{blue}{Let Assumption $\mathrm{(A3)}$ hold} and $r_k^Tu_k=0$. 
Then there exists a constant $\kappa_{\rm qkh}\geq 0$ such that
\begin{eqnarray}\label{lema2}
  \Delta q^H_k\geq \kappa_{\rm qkh}\sqrt{\beta_k\|g_k^Z\|^3}
\end{eqnarray}
for all $k\geq 0$.
\end{lemma}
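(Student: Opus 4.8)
The plan is to reduce $\Delta q^H_k$ to the reduced-space ARC quantities and then mimic the classical lower-bound argument for cubic regularization. First I would use the definition of $q^H$ in \eqref{mt} together with $h_k=Z_ku_k$, $g^Z_k=Z^T_k(g_k+B_kv_k)$ and $B^Z_k=Z^T_kB_kZ_k$ to write, since $q^H(x_k,B_k,0)=0$,
\[
\Delta q^H_k=-\langle g^Z_k,u_k\rangle-\tfrac12\langle u_k,B^Z_ku_k\rangle.
\]
Substituting the approximate stationarity condition \eqref{8a} in the form $g^Z_k=r_k-(B^Z_k+\lambda_kI)u_k$ and invoking the orthogonality hypothesis $r_k^Tu_k=0$, I get $\langle g^Z_k,u_k\rangle=-u_k^TB^Z_ku_k-\lambda_k\|u_k\|^2$, whence $\Delta q^H_k=\tfrac12 u_k^TB^Z_ku_k+\lambda_k\|u_k\|^2$.

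Next I would exploit the remaining two relations of \eqref{TKm}. The curvature condition \eqref{8b} gives $u_k^TB^Z_ku_k\geq-\lambda_k\|u_k\|^2$, so that $\Delta q^H_k\geq\tfrac12\lambda_k\|u_k\|^2$, the reduced-space analogue of the standard ``decrease $\gtrsim\lambda\|u\|^2$'' estimate. Feeding in the lower half of the sampling relation \eqref{8c}, namely $\lambda_k\geq\|u_k\|/(t\beta_k)$, then yields
\[
\Delta q^H_k\geq\frac{1}{2t}\frac{\|u_k\|^3}{\beta_k}.
\]
At this point the target \eqref{lema2} follows immediately from a lower bound of the form $\|u_k\|\geq c\sqrt{\beta_k\|g^Z_k\|}$, because then $\|u_k\|^3/\beta_k\geq c^3(\beta_k\|g^Z_k\|)^{3/2}/\beta_k=c^3\sqrt{\beta_k\|g^Z_k\|^3}$, and one may take $\kappa_{\rm qkh}=c^3/(2t)$.

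The main obstacle is establishing this lower bound on $\|u_k\|$. I would start from \eqref{8a} rewritten as $(B^Z_k+\lambda_kI)u_k=r_k-g^Z_k$; bounding the left side above by $(\kappa_{\rm B}+\lambda_k)\|u_k\|$ via Assumption (A3) together with $\|Z_k\|=1$, and the right side below by $\|g^Z_k\|-\|r_k\|$, gives $(\kappa_{\rm B}+\lambda_k)\|u_k\|\geq\|g^Z_k\|-\|r_k\|$. Using the upper half of \eqref{8c}, $\lambda_k\leq t\|u_k\|/\beta_k$, this becomes $\kappa_{\rm B}\|u_k\|+(t/\beta_k)\|u_k\|^2+\|r_k\|\geq\|g^Z_k\|$. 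The delicate part is controlling the residual through \eqref{rk}, $\|r_k\|\leq\xi\min\{\|g^Z_k\|,\|u_k\|\}^{1+\zeta}$, and then disentangling the inequality: as in the classical ARC analysis one is led to a dichotomy in which the dominant term on the left is either the quadratic term $(t/\beta_k)\|u_k\|^2$, which directly produces $\|u_k\|\geq\sqrt{\beta_k\|g^Z_k\|/(3t)}$ and hence \eqref{lema2}, or the linear term $\kappa_{\rm B}\|u_k\|$, corresponding to the small-gradient regime. I expect this case separation to be the crux. I would dispatch the residual by noting that the loop-exit rule of Algorithm \ref{Alg1} and \eqref{rk} make $\|r_k\|$ absorbable into a fraction of $\|g^Z_k\|$, and I would use the index choice at line 7 of Algorithm \ref{Alg2}, which forces $B^Z_k+\lambda_kI\succ0$, to justify the eigenvalue bounds used above; the boundedness of $\|g^Z_k\|$ (from (A1), (A3) and the bound on $v_k$ in Lemma \ref{lem3.7}) then lets the square-root branch govern the estimate, so that $\kappa_{\rm qkh}$ can be chosen uniformly in $k$.
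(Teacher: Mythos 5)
Your reduction to $\Delta q^H_k=\tfrac12 u_k^TB^Z_ku_k+\lambda_k\|u_k\|^2\geq\tfrac12\lambda_k\|u_k\|^2\geq\|u_k\|^3/(2t\beta_k)$ is exactly the first half of the paper's proof (same use of \eqref{8a} with $r_k^Tu_k=0$, then \eqref{8b} and the left half of \eqref{8c}). The gap is in the second half. You aim for $\|u_k\|\geq c\sqrt{\beta_k\|g^Z_k\|}$ via a Cartis-style dichotomy, but your case analysis delivers this only in the branch where the quadratic term $(t/\beta_k)\|u_k\|^2$ dominates; in the branch where the linear term $\kappa_{\rm B}\|u_k\|$ dominates you get only $\|u_k\|\geq\|g^Z_k\|/(3\kappa_{\rm B})$, and then $\|u_k\|^3/(2t\beta_k)$ is of order $\|g^Z_k\|^3/\beta_k$, which is \emph{not} bounded below by a multiple of $\sqrt{\beta_k\|g^Z_k\|^3}$ unless $\|g^Z_k\|$ is at least of order $\beta_k$. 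Since Algorithm \ref{Alg2} multiplies $\beta_k$ by $\gamma_2>1$ at every very successful iteration, $\beta_k$ is not bounded above, so this branch genuinely fails for large $\beta_k$. Your proposed fix---that boundedness of $\|g^Z_k\|$ ``lets the square-root branch govern''---does not work: an upper bound on the gradient cannot supply the lower bound $\|u_k\|\geq c\sqrt{\beta_k\|g^Z_k\|}$ that the uncovered regime requires.

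The missing ingredient, and the point where the paper's argument departs from yours, is the uniform lower bound on the shifts: by the construction of the shift grid, $\lambda_k\geq\lambda_0>0$ for all $k$ (see \eqref{k00}), so the other half of \eqref{8c} gives $\|u_k\|\geq\beta_k\lambda_k/t\geq\lambda_0\beta_k/t$. Writing $\|u_k\|^3=\|u_k\|^{3/2}\|u_k\|^{3/2}$ and spending one factor on this bound turns the estimate \eqref{qkh1} into $\Delta q^H_k\geq\bigl(\lambda_0^{3/2}/(2t^{5/2})\bigr)\sqrt{\beta_k}\,\|u_k\|^{3/2}$, in which the $\sqrt{\beta_k}$ factor is already present independently of the gradient; after that only a \emph{linear} lower bound $\|u_k\|\geq c'\|g^Z_k\|$ is needed, which the paper extracts from \eqref{8a} and \eqref{rk} exactly as in your residual estimate $M\|u_k\|\geq\|g^Z_k\|-\|r_k\|$. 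If you inject $\|u_k\|\geq\lambda_0\beta_k/t$ into your argument (or replace the dichotomy by this two-factor split), your proof closes with $\kappa_{\rm qkh}$ uniform in $k$; without it, the large-$\beta_k$ case is uncovered and the claimed uniform constant cannot be obtained along your route.
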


\begin{proof}
\textcolor{blue}{According to the definition of $q^H(x,B,h)$ in \eqref{qHdef}, the definition of $\Delta q^H_k$ in \eqref{deltaqHdef}, the definition of $g_k^Z$, $B_k^Z$ in \eqref{Bzkgzkdef}, and the definition of $u_k$, $Z_k$ in \eqref{tn},} we can get that
\begin{eqnarray*}
  \Delta q^H_k &=& q^H(x_k,B_k,0)-q^H(x_k,B_k,h_k)\\
  &=& -(g(x_k)+B_kv_k)^Th_k-\frac{1}{2}h_k^TB_kh_k \\
  &=& -u_k^T{g_k^Z}-\frac{1}{2}u_k^TB_k^Zu_k.
\end{eqnarray*}
Multiplying \eqref{8a} by $u_k$ and using the assumption that $r_k^Tu_k=0$, we obtain that
\begin{eqnarray*}
-u_k^T{g_k^Z} = u_k^T(B_k^Z+\lambda_kI)u_k.
\end{eqnarray*}
Hence,
\begin{eqnarray*}
  \Delta q^H_k &=& u_k^T(B_k^Z+\lambda_kI)u_k-\frac{1}{2}u_k^TB_k^Zu_k\\
  &=& \frac{1}{2}u_k^TB_k^Zu_k+\lambda_k\|u_k\|^2.
\end{eqnarray*}
According to the selection of the \textcolor{blue}{shift $\lambda_k$ in \eqref{shiftbd},} we have that
\begin{eqnarray}\label{k00}
\lambda_k\geq \lambda_0
\end{eqnarray}
for all $k$.
Thus, using \eqref{8b}, we obtain that
\begin{eqnarray}\label{qkh1}
 \textcolor{blue}{\Delta q_k^H} &\textcolor{blue}{\geq}& \textcolor{blue}{-\frac{1}{2}\lambda_k\|u_k\|^2+\lambda_k\|u_k\|^2} \nonumber\\
 &\textcolor{blue}{\geq}& \textcolor{blue}{\frac{\|u_k\|^3}{2t\beta_k}}
 \textcolor{blue}{=} \textcolor{blue}{\|u_k\|^{\frac{3}{2}}\frac{\|u_k\|^{\frac{3}{2}}}{2t\beta_k}}\textcolor{blue}{\underset{\geq}{\eqref{8c},\eqref{k00}} } \textcolor{blue}{\frac{\lambda_0^{\frac{3}{2}}\sqrt{\beta_k}}{2t^{\frac{5}{2}}}\|u_k\|^{\frac{3}{2}}}.
\end{eqnarray}

We have from \eqref{8a} that
  \begin{equation}\label{rkgkZ}
  \|r_k-g_k^Z\| = \|(B_k^Z+\lambda_kI)u_k\| \leq \|B_k^Z+\lambda_kI\|\|u_k\|.
  \end{equation}
\textcolor{blue}{From the definition of $Z_k$ in \eqref{tn}, $\|Z_k\|$ is bounded, that is, there exists a constant} \textcolor{blue}{$\kappa_{\rm Z}>0$ so that}
\begin{equation}\label{Zkbd}
\textcolor{blue}{\|Z_k\|\leq \kappa_{\rm Z}.}
\end{equation}
\textcolor{blue}{Then Assumption $\mathrm{(A3)}$, \eqref{Bzkgzkdef}, \eqref{shiftbd} and \eqref{Zkbd} imply that}
\begin{eqnarray*}
\textcolor{blue}{ \|B_k^Z+\lambda_kI\|\leq\|Z_k^T\|\|B_k\|\|Z_k\|+\lambda_k \leq \kappa_{\rm B}\kappa_{\rm Z}^2+ 10^{e_U},}
\end{eqnarray*}
\textcolor{blue}{that is,} $$\textcolor{blue}{\|B_k^Z+\lambda_kI\|\leq M}$$ \textcolor{blue}{with} \textcolor{blue}{$M:=\kappa_{\rm B}\kappa_{\rm Z}^2+ 10^{e_U}$.}
\textcolor{blue}{So, from this inequality and \eqref{rkgkZ} we can obtain that}
\begin{equation}\label{ugeqM}
 \textcolor{blue}{ \|r_k-g_k^Z\|\leq M\|u_k\|.}
\end{equation}
\textcolor{blue}{From \eqref{rk}, we have that} $$\textcolor{blue}{\|r_k-g_k^Z\|\geq \|g_k^Z\|-\|r_k\| \geq \|g_k^Z\|-\xi \|u_k\|^{1+\zeta}.}$$ 
\textcolor{blue}{Then it follows from \eqref{ugeqM} that}
$$\textcolor{blue}{\xi \|u_k\|^{1+\zeta}\geq  \|g_k^Z\|-\|r_k-g_k^Z\|\geq \|g_k^Z\|-M\|u_k\|.}$$
\textcolor{blue}{Then we can obtain that}
$$\textcolor{blue}{\|u_k\|\geq \frac{\|g_k^Z\|}{M+\xi \|u_k\|^{\zeta}}.}$$
\textcolor{blue}{\eqref{8c}, \eqref{shiftbd} and \eqref{bdbetak} yield that} $$\textcolor{blue}{\|u_k\|\leq t\lambda_k\beta_k\leq t\beta_{\rm max}10^{eU}.}$$
\textcolor{blue}{Combining the two above inequalities, we have that}
\begin{equation}\label{uk3}
\textcolor{blue}{\|u_k\| \geq \frac{\|g_k^Z\|}{M+\xi (t\beta_{\rm max}10^{eU})^{\zeta}}.}
\end{equation}
Substitute \eqref{uk3} into \eqref{qkh1}, we can get that
\begin{eqnarray*}
  \Delta q^H_k\geq \kappa_{\rm qkh}\sqrt{\beta_k\|g_k^Z\|^3}
\end{eqnarray*}
with $\kappa_{\rm qkh}:=\frac{\lambda_0^{\frac{3}{2}}}{2t^{\frac{5}{2}}(M+\xi (t\beta_{\rm max}10^{eU})^{\zeta})^{\frac{3}{2}}}$. 
Then the lemma is proved.
\end{proof}

The following lemma gives a useful bound for the horizontal step $h_k$.
\begin{lemma}\label{lem3.3}
\textcolor{blue}{Let Assumption $\mathrm{(A3)}$ hold.} 
Then the horizontal step
\begin{eqnarray}\label{lema3}
  \|h_k\|\leq 3\beta_k {\rm max}\left\{t{\rm \kappa_B}, \sqrt{\frac{t\|g_k^Z\|}{\beta_k}}\right\}
\end{eqnarray}
for all $k\geq0$.
\end{lemma}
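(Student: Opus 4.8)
The plan is to reduce the bound on $\|h_k\|$ to a bound on $\|u_k\|$, and then extract the latter directly from the approximate optimality system \eqref{8a}--\eqref{8c}. Since the columns of $Z_k$ are orthonormal, \eqref{tn} gives $\|h_k\| = \|Z_k u_k\| = \|u_k\|$, so it suffices to bound $\|u_k\|$; the case $u_k=0$ is trivial (then $h_k=0$ and \eqref{lema3} holds), so I assume $u_k\neq 0$.

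First I would isolate the shift term by rewriting \eqref{8a} as $\lambda_k u_k = r_k - g_k^Z - B_k^Z u_k$ and taking norms. Using $\|B_k^Z\| = \|Z_k^T B_k Z_k\| \le \|B_k\| \le \kappa_{\rm B}$ (from $\mathrm{(A3)}$ and $\|Z_k\|=1$), this yields
\[
\lambda_k\|u_k\| \le \|r_k\| + \|g_k^Z\| + \kappa_{\rm B}\|u_k\|.
\]
Next I would control the residual by $\|r_k\|\le\|g_k^Z\|$ using \eqref{rk}, so the right-hand side is at most $2\|g_k^Z\|+\kappa_{\rm B}\|u_k\|$, and then invoke the left inequality of \eqref{8c}, namely $\lambda_k \ge \|u_k\|/(t\beta_k)$, to obtain the quadratic inequality
\[
\frac{\|u_k\|^2}{t\beta_k} \le 2\|g_k^Z\| + \kappa_{\rm B}\|u_k\|,
\qquad\text{i.e.}\qquad
\|u_k\|^2 - t\kappa_{\rm B}\beta_k\|u_k\| - 2t\beta_k\|g_k^Z\| \le 0 .
\]

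Solving this quadratic in $\|u_k\|$ and applying $\sqrt{a+b}\le\sqrt{a}+\sqrt{b}$ gives
\[
\|u_k\| \le \tfrac12\left(t\kappa_{\rm B}\beta_k + \sqrt{t^2\kappa_{\rm B}^2\beta_k^2 + 8t\beta_k\|g_k^Z\|}\,\right) \le t\kappa_{\rm B}\beta_k + \sqrt{2t\beta_k\|g_k^Z\|}.
\]
Bounding each summand by the larger of the two, using $1+\sqrt{2}\le 3$ and the identity $\sqrt{t\beta_k\|g_k^Z\|} = \beta_k\sqrt{t\|g_k^Z\|/\beta_k}$, then produces $\|u_k\| \le 3\beta_k\max\!\big(t\kappa_{\rm B},\sqrt{t\|g_k^Z\|/\beta_k}\big)$, which is \eqref{lema3}.

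The step I expect to be the main obstacle is the residual estimate $\|r_k\|\le\|g_k^Z\|$: the stopping rule \eqref{rk} only delivers $\|r_k\|\le \xi\min\{\|g_k^Z\|,\|u_k\|\}^{1+\zeta}\le \xi\|g_k^Z\|^{1+\zeta}$, so to keep the clean constant $3$ one must argue that the CG--Lanczos residual never exceeds its initial value $\|r_0\|=\|g_k^Z\|$ (attained at $u=0$). An equivalent and cleaner route is to exploit the CG orthogonality $r_k^T u_k = 0$ already used in Lemma \ref{lem3.2}: taking the inner product of \eqref{8a} with $u_k$ gives $\lambda_k\|u_k\|^2 = -u_k^T g_k^Z - u_k^T B_k^Z u_k \le \|u_k\|\,\|g_k^Z\| + \kappa_{\rm B}\|u_k\|^2$, which removes the residual entirely and in fact improves the leading constant to $2$. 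The remaining work (the quadratic solve and the passage to the $\max$) is routine.
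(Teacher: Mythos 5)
Your proof is correct, but it takes a genuinely different route from the paper's. The paper argues by contradiction through an auxiliary cubic model: it defines $m^H(x_k,B_k,h)=(g_k+B_kv_k)^Th+\frac{1}{2}h^TB_kh+\frac{1}{3t}\beta_k^{-1}\|h\|^3$, uses the bound $\Delta q_k^H\ge \|u_k\|^3/(2t\beta_k)$ from \eqref{qkh1} in the proof of Lemma \ref{lem3.2} to conclude $\Delta m_k^H\ge \|u_k\|^3/(6t\beta_k)>0$, and then shows that if $\|h_k\|$ exceeded the stated bound, both groupings $\frac{1}{9t\beta_k}\|h_k\|^3-\|g_k^Z\|\|h_k\|$ and $\frac{2}{9t\beta_k}\|h_k\|^3-\frac{1}{2}\|B_k\|\|h_k\|^2$ would be positive, forcing $-\Delta m_k^H>0$, a contradiction. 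You instead bound $\|u_k\|=\|h_k\|$ directly from the stationarity system \eqref{8a}, combining it with the lower bound $\lambda_k\ge\|u_k\|/(t\beta_k)$ from \eqref{8c} to get a quadratic inequality in $\|u_k\|$. Your self-diagnosed weak spot is real: \eqref{rk} only yields $\|r_k\|\le\xi\|g_k^Z\|^{1+\zeta}$, which need not be $\le\|g_k^Z\|$ when $\xi>1$ or $\|g_k^Z\|>1$, so your first variant as written does not deliver the clean constant $3$ without an extra monotonicity argument for the CG residual. But your fallback via $r_k^Tu_k=0$ closes the gap completely, dispenses with \eqref{rk} and even \eqref{8b}, and puts you on exactly the same footing as the paper, whose own proof silently inherits the hypothesis $r_k^Tu_k=0$ through \eqref{qkh1} even though it is not listed in the statement of Lemma \ref{lem3.3}. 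What each approach buys: the paper's model-decrease argument reuses machinery native to the ARC framework (positivity of the regularized model decrease), whereas your direct route is more elementary, avoids introducing the auxiliary model with the $1/t$-weakened cubic term, and sharpens the constant in \eqref{lema3} from $3$ to $2$ (or $1+\sqrt{2}$ in the residual variant).
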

\begin{proof}
Let
\begin{eqnarray}\label{mH}
 m^H(x_k,B_k,h):=(g(x_k)+B_kv_k)^Th+\frac{1}{2}h^TB_kh+\frac{1}{3t}\beta_k^{-1}\|h\|^3.
\end{eqnarray}
\textcolor{blue}{By the definition of $u_k, Z_k$ in \eqref{tn},} substituting $h_k$ into \eqref{mH}, we get from \eqref{qkh1} that
\begin{eqnarray}\label{mHk}
\begin{aligned}
 \Delta m^H_k & := m^H(x_k,B_k,0)-m^H(x_k,B_k,h_k)\\
 &=-(g_k+B_kv_k)^Th_k-\frac{1}{2}h^T_kB_kh_k-\frac{1}{3t\beta_k}\|h_k\|^3 \\
 &=-{g_k^Z}^Tu_k-\frac{1}{2}u_k^TB_k^Zu_k-\frac{1}{3t\beta_k}\|u_k\|^3 \\
 &=\Delta q_k^H-\frac{1}{3t\beta_k}\|u_k\|^3 \geq \frac{\|u_k\|^3}{6t\beta_k} > 0.
 \end{aligned}
\end{eqnarray}
Now, consider
\begin{eqnarray*}
 -\Delta m_k^H &=& m^H(x_k,B_k,h_k)-m^H(x_k,B_k,0) \\
 &=&(g_k+B_kv_k)^Th_k+\frac{1}{2}h^T_kB_kh_k+\frac{1}{3t\beta_k}\|h_k\|^3 \\
 &\geq& -\|g_k^Z\|\|h_k\|-\frac{1}{2}\|B_k\|\|h_k\|^2+\frac{1}{3t\beta_k}\|h_k\|^3 \\
 &=& \left(\frac{1}{9t\beta_k}\|h_k\|^3-\|g_k^Z\|\|h_k\|\right)+\left(\frac{2}{9t\beta_k}\|h_k\|^3-\frac{1}{2}\|B_k\|\|h_k\|^2\right).
\end{eqnarray*}
When $\|h_k\|>3\sqrt{t\beta_k\|g_k^Z\|}\|$, we have that
\begin{eqnarray*}
 \frac{1}{9t\beta_k}\|h_k\|^3-\|g_k^Z\|\|h_k\|>0.
\end{eqnarray*}
When $\|h_k\|>\frac{9t\beta_k}{4}\|B_k\|$, we have that
\begin{eqnarray*}
 \frac{2}{9t\beta_k}\|h_k\|^3-\frac{1}{2}\|B_k\|\|h_k\|^2>0.
\end{eqnarray*}
So, when $\|h_k\|>3 \beta_k \max\left\{t\|B_k\|, \sqrt{\frac{t\|g_k^Z\|}{\beta_k}}\right\}$, we get that
$
 -\Delta m_k^H>0.
$
But this is contradictory to \eqref{mHk}. Then, we have from Assumption $\mathrm{(A3)}$ that
$$ \|h_k\|\leq 3 \beta_k \max\left\{t\|B_k\|, \sqrt{\frac{t\|g_k^Z\|}{\beta_k}}\right\} \leq 3 \beta_k \max\left\{t{\rm \kappa_B}, \sqrt{\frac{t\|g_k^Z\|}{\beta_k}}\right\}.$$
The conclusion of the lemma holds.
\end{proof}

The following lemma gives a useful bound on the overall step $d_k$ from the result of Lemma \ref{lem3.3}.
\begin{lemma}\label{lem3.4}
\textcolor{blue}{Let Assumptions $\mathrm{(A1)}$, $\mathrm{(A3)}$, and $\mathrm{(A4)}$ hold.} Let $\mathcal{I}$ be an infinite index such that
\begin{eqnarray}\label{37}
 \|g^Z_k\|\geq\epsilon,~{\rm for~all}~k\in \mathcal{I}~{\rm and~some}~\epsilon>0,
\end{eqnarray}
and
\begin{eqnarray}\label{38}
 \sqrt{\frac{\beta_k\|g^Z_k\|}{t}}\rightarrow0,~{\rm as}~k\rightarrow\infty,~k\in\mathcal{I}.
\end{eqnarray}
Then, there exists a constant $M_{\rm d}>0$ such that
\begin{eqnarray}\label{d39}
 \|d_k\|\leq  M_{\rm d}\sqrt{\beta_k},
\end{eqnarray}
\textcolor{blue}{for sufficiently large $k\in\mathcal{I}$.}
\end{lemma}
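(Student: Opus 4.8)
The plan is to bound the two pieces of $d_k = v_k + h_k$ separately and then combine them via the triangle inequality $\|d_k\| \le \|v_k\| + \|h_k\|$. For the vertical step, the definition \eqref{nk} together with the right endpoint of the interval in \eqref{aa1h} gives $\|v_k\| = \alpha_k\|v_k^c\| \le \sqrt{\beta_k}$ directly (when $c(x_k)=0$ this holds trivially since $v_k=0$), exactly as already used in the proof of Lemma \ref{lem3.7}. So the whole difficulty lies in showing that $\|h_k\|$ is also $O(\sqrt{\beta_k})$, even though the generic bound \eqref{lema3} of Lemma \ref{lem3.3} carries a term that is only $O(\beta_k)$.

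The key observation is that the hypotheses \eqref{37} and \eqref{38} force $\beta_k \to 0$ along $\mathcal{I}$: combining $\|g_k^Z\| \ge \epsilon$ with $\sqrt{\beta_k\|g_k^Z\|/t} \to 0$ yields $\sqrt{\beta_k\epsilon/t} \le \sqrt{\beta_k\|g_k^Z\|/t} \to 0$, hence $\beta_k \to 0$ as $k \to \infty$, $k \in \mathcal{I}$. Using $\beta_k \to 0$ and $\|g_k^Z\| \ge \epsilon$ once more, the quantity $\sqrt{t\|g_k^Z\|/\beta_k} \ge \sqrt{t\epsilon/\beta_k}$ tends to $+\infty$, so for all sufficiently large $k \in \mathcal{I}$ it exceeds the fixed number $t\kappa_{\rm B}$. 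Consequently the maximum in \eqref{lema3} is attained by its second argument, and Lemma \ref{lem3.3} reduces to $\|h_k\| \le 3\beta_k\sqrt{t\|g_k^Z\|/\beta_k} = 3\sqrt{t\|g_k^Z\|}\,\sqrt{\beta_k}$.

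It then remains only to make the coefficient uniform in $k$. The bound \eqref{gkz}, valid under $\mathrm{(A1)}$--$\mathrm{(A4)}$, gives $\|g_k^Z\| \le M_{\rm g}$ for all $k$, whence $\|h_k\| \le 3\sqrt{t M_{\rm g}}\,\sqrt{\beta_k}$. Adding the vertical bound yields $\|d_k\| \le (1 + 3\sqrt{t M_{\rm g}})\sqrt{\beta_k}$ for sufficiently large $k \in \mathcal{I}$, so taking $M_{\rm d} = 1 + 3\sqrt{t M_{\rm g}}$ finishes the argument.

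The main obstacle --- and the only genuinely nontrivial point --- is the reduction of the maximum in \eqref{lema3} to its square-root argument: this is precisely where the two hypotheses are needed, since the linear-in-$\beta_k$ alternative $3t\kappa_{\rm B}\beta_k$ would only deliver an $O(\beta_k)$ estimate rather than the desired $O(\sqrt{\beta_k})$ one. Everything else is the triangle inequality together with the previously established bounds $\|v_k\| \le \sqrt{\beta_k}$ and $\|g_k^Z\| \le M_{\rm g}$.
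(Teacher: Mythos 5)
Your proof is correct and follows essentially the same route as the paper: bound $\|v_k\|\le\sqrt{\beta_k}$ via \eqref{nk} and \eqref{aa1h}, show that \eqref{37}--\eqref{38} force $\sqrt{t\|g_k^Z\|/\beta_k}\to\infty$ so that the maximum in \eqref{lema3} is eventually attained by its square-root argument, and then use $\|g_k^Z\|\le M_{\rm g}$ to get $\|d_k\|\le(1+3\sqrt{tM_{\rm g}})\sqrt{\beta_k}$, which matches the paper's constant $M_{\rm d}=1+\sqrt{9tM_{\rm g}}$ exactly. The only cosmetic difference is that you first extract $\beta_k\to 0$ and then bound $\sqrt{t\epsilon/\beta_k}$, whereas the paper rewrites $\sqrt{t\|g_k^Z\|/\beta_k}=\|g_k^Z\|\sqrt{t/(\beta_k\|g_k^Z\|)}\ge\epsilon\sqrt{t/(\beta_k\|g_k^Z\|)}$ and invokes \eqref{38} directly; the underlying mechanism is identical.
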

\begin{proof}
According to \eqref{37}, we have that
$$
 \sqrt{\frac{t\|g^Z_k\|}{\beta_k}} =  \|g_k^Z\|\sqrt{\frac{t}{\beta_k\|g^Z_k\|}}\geq \epsilon \sqrt{\frac{t}{\beta_k\|g^Z_k\|}}.
$$
Then, we have from \eqref{38} that
\begin{eqnarray*}
\epsilon \sqrt{\frac{t}{\beta_k\|g^Z_k\|}}\rightarrow\infty,~{\rm as}~k\rightarrow\infty,~k\in\mathcal{I}.
\end{eqnarray*}
So, we get from \eqref{lema3} that
\begin{eqnarray}\label{hk40}
 \|h_k\|\leq 3\sqrt{t\beta_k\|g_k^Z\|}.
\end{eqnarray}

\textcolor{blue}{By Assumptions $\mathrm{(A1)}$, $\mathrm{(A3)}$, $\mathrm{(A4)}$ and \eqref{kbon}, we get that there exists a constant $M_{\rm g}$ so that for all k,}
\begin{eqnarray}\label{gkzbd}
\textcolor{blue}{  \|g_k^Z\|=\|Z^T_k(g_k+B_kv_k)\|\leq M_{\rm g}.}
\end{eqnarray}
\textcolor{blue}{From \eqref{nk} and \eqref{aa1h}, we have that
$\|v_k\| \leq \sqrt{\beta _k}.$ 
Using this bound of $\|v_k\|$, the definition of $d_k$ in \eqref{dkdef}, \eqref{hk40}, and \eqref{gkzbd}, we obtain that}
\begin{eqnarray*}
\textcolor{blue}{\|d_k\|}&\textcolor{blue}{\leq}& \textcolor{blue}{\|v_k\|+\|h_k\|}\\
& \textcolor{blue}{\leq} &\textcolor{blue}{\sqrt{\beta _k}+3\sqrt{t\beta_k\|g_k^Z\|}}\\
& \textcolor{blue}{\leq} &\textcolor{blue}{(1+3\sqrt{t M_{\rm g}})\sqrt{\beta_k}}.
\end{eqnarray*}
Let \textcolor{blue}{${M_{\rm d}}:=1+3\sqrt{tM_{\rm g}}.$} Then, we have that $\|d_k\|\leq M_{\rm d}\sqrt{\beta_k},$ and the lemma is proved.
\end{proof}

We next investigate the error between the penalty function and its overall model at the new iterate $x_k+d_k$.
\begin{lemma}\label{lem3.5}
\textcolor{blue}{Let Assumptions $\mathrm{(A2)-(A4)}$ hold.} Then, \textcolor{blue}{for sufficiently large $k\in\mathcal{I}$,} the step generated by the algorithm satisfies the bounds
\begin{eqnarray}
\phi(x_k+d_k,\mu_k)-q(x_k,B_k,\mu_k,d_k)&\leq& \kappa_0(1+\mu_k)\|d_k\|^2 \label{pmerror} \\
  &\leq&\kappa_{\rm m}(1+\mu_k)\beta_k \label{fimerror}
\end{eqnarray}
for some constants $\kappa_0$ and $\kappa_{\rm m}$.
\end{lemma}
\begin{proof}
From Assumption $\mathrm{(A2)}$ and the Taylor expansion of $f(x_k+d_k)$,
\begin{eqnarray}\label{39}
  f(x_k+d_k)-f(x_k)-g^T_kd_k \leq \kappa_{\rm fx}\|d_k\|^2
\end{eqnarray}
for some Lipschitz constant $\kappa_{\rm fx}$. Similarly, the Lipschitz continuity of $J(x)$ implies that
\begin{eqnarray}\label{40}
\begin{aligned}
 \|c(x_k+d_k)\|-\|c(x_k)+J_kd_k\|&\leq\|c(x_k+d_k)-J_kd_k-c(x_k)\| \\
 &\leq \kappa_{\rm cx}\|d_k\|^2
 \end{aligned}
\end{eqnarray}
for another Lipschitz constant $\kappa_{\rm cx}$. Assumption $\mathrm{(A3)}$ implies that
\begin{eqnarray}\label{41}
  d^T_kB_kd_k \leq \kappa_{\rm B}\|d_k\|^2.
\end{eqnarray}
Hence, from \eqref{39}, \eqref{40}, \eqref{41}, the definition of $q(x,B,\mu,d)$ in \eqref{phimodelqdef} and the definition of $\phi(x,\mu)$ in \eqref{phidef}, we obtain that
\begin{eqnarray*}
&&\phi(x_k+d_k,\mu_k)-q(x_k,B_k,\mu_k,d_k)\\
&=& f(x_k+d_k)-f(x_k)-g^T_kd_k-\frac{1}{2}d^T_kB_kd_k+\mu_k\|c(x_k+d_k)\|\\
&-&\mu_k\|c(x_k)+J_kd_k\|\\
&\leq& (\kappa_{\rm fx}+\frac{1}{2}\kappa_{\rm B}+\mu_k\kappa_{\rm cx})\|d_k\|^2\\
&\leq& \kappa_0(1+\mu_k)\|d_k\|^2
\end{eqnarray*}
with $\kappa_0 :={\rm max}\{\kappa_{\rm fx}+\frac{1}{2}\kappa_{\rm B}, \kappa_{\rm cx}\}$. Then \eqref{pmerror} holds.

From \eqref{d39}, we get that
$$\phi(x_k+d_k,\mu_k)-q(x_k,B_k,\mu_k,d_k) \leq  \kappa_0(1+\mu_k){M_{\rm d}}^2\textcolor{blue}{\beta_k}= \kappa_{\rm m}(1+\mu_k)\beta_k
$$
with $\kappa_{\rm m}:=\kappa_0 M_{\rm d}^2$. Then \eqref{fimerror} holds.

Therefore, we complete the proof of the lemma.
\end{proof}

For convenience, define the auxiliary function
\begin{equation}\label{Phidef}
\Phi(x,\mu):=\frac{\phi(x,\mu)-f_{\rm min}}{\mu}.
\end{equation}

Below we provide some important results for the auxiliary function $\Phi(x,\mu)$.
\begin{lemma}\label{lem3.6}
\textcolor{blue}{Let Assumptions ${\rm (A1)}$ and ${\rm (A4)}$ hold.} Let $\{k_i\}(i\geq0)$ be the index set of the successful iterations from the algorithm. Then
\begin{eqnarray}\label{varphi1}
  \Phi(x_{k_i},\mu_{k_i}) \geq 0
\end{eqnarray}
and for any $j>0$,
\begin{eqnarray}\label{varphi2}
  \Phi(x_{k_{i+j}},\mu_{k_{i+j}})\leq\Phi(x_{k_i},\mu_{k_i})-\eta_1\frac{\Delta q_{k_i}}{\mu_{k_i}}.
\end{eqnarray}
\end{lemma}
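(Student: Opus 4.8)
The plan is to first settle \eqref{varphi1} directly, then prove a one-step decrease inequality for $\Phi$ across a single successful iteration, and finally telescope it to reach \eqref{varphi2} for arbitrary $j$. The whole argument is driven by rewriting $\Phi$ in the separated form
\begin{eqnarray*}
\Phi(x,\mu)=\frac{f(x)-f_{\rm min}}{\mu}+\|c(x)\|,
\end{eqnarray*}
which makes both the sign of $\Phi$ and its dependence on $\mu$ transparent. For \eqref{varphi1} I would simply note that, by \eqref{fmin}, $f(x_{k_i})-f_{\rm min}\geq0$, while $\mu_{k_i}>0$ and $\|c(x_{k_i})\|\geq0$; hence both summands are nonnegative and $\Phi(x_{k_i},\mu_{k_i})\geq0$.

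The heart of the matter is the one-step bound, which I would assemble from two observations. First, since iteration $k_i$ is successful we have $\rho_{k_i}\geq\eta_1$, and because the ratio \eqref{rhok1} is formed with the \emph{single} parameter $\mu_{k_i}$ evaluated at both $x_{k_i}$ and $x_{k_i}+d_{k_i}$, this reads $\phi(x_{k_i},\mu_{k_i})-\phi(x_{k_i}+d_{k_i},\mu_{k_i})\geq\eta_1\Delta q_{k_i}$. Unsuccessful iterations leave the iterate unchanged, so the next successful iterate is $x_{k_{i+1}}=x_{k_i}+d_{k_i}$, and dividing by $\mu_{k_i}$ gives $\Phi(x_{k_{i+1}},\mu_{k_i})\leq\Phi(x_{k_i},\mu_{k_i})-\eta_1\Delta q_{k_i}/\mu_{k_i}$. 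Second — and this is the step that neutralizes the penalty update — the separated form shows that $\Phi(x,\cdot)$ is nonincreasing in $\mu$ whenever $f(x)\geq f_{\rm min}$; combined with the fact that \eqref{miuk} always enforces $\mu_{k+1}\geq\mu_k$, so $\{\mu_k\}$ is nondecreasing, this yields $\Phi(x_{k_{i+1}},\mu_{k_{i+1}})\leq\Phi(x_{k_{i+1}},\mu_{k_i})$. Chaining the two gives the one-step decrease
\begin{eqnarray*}
\Phi(x_{k_{i+1}},\mu_{k_{i+1}})\leq\Phi(x_{k_i},\mu_{k_i})-\eta_1\frac{\Delta q_{k_i}}{\mu_{k_i}}.
\end{eqnarray*}

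For general $j>0$ I would telescope this estimate. Applying the one-step bound successively at $k_{i+1},\dots,k_{i+j-1}$ and using that each intermediate decrease term $\eta_1\Delta q_{k_{i+l}}/\mu_{k_{i+l}}$ is nonnegative — which follows from \eqref{deltam} in the form $\Delta q_k\geq\nu\mu_k\Delta q_k^N$ together with $\Delta q_k^N\geq0$ from Lemma \ref{lem3.1} — the values $\Phi(x_{k_{i+l}},\mu_{k_{i+l}})$ form a nonincreasing sequence in $l$. Hence every decrease after the first may be discarded, leaving exactly the contribution from iteration $k_i$, which is \eqref{varphi2}. I expect the only genuine obstacle to be the bookkeeping around the penalty parameter: one must confirm that \eqref{miuk} never decreases $\mu_k$ and exploit the monotonicity of $\Phi$ in $\mu$; once these are in place, the rest is an immediate consequence of the successful-step ratio test and the nonnegativity facts already recorded in the preliminary lemmas.
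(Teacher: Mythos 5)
Your proposal is correct and follows essentially the same route as the paper: nonnegativity of $\Phi$ from \eqref{fmin} and $\mu_k>0$, a one-step decrease from $\rho_{k_i}\geq\eta_1$ with $x_{k_{i+1}}=x_{k_i}+d_{k_i}$, monotonicity of $\Phi(x,\cdot)$ in $\mu$ (via the separated form, equivalent to the paper's identity $\Phi(x_{k_{i+1}},\mu_{k_i})-\Phi(x_{k_{i+1}},\mu_{k_{i+1}})=(f(x_{k_{i+1}})-f_{\rm min})\bigl(\tfrac{1}{\mu_{k_i}}-\tfrac{1}{\mu_{k_{i+1}}}\bigr)\geq0$) together with the nondecreasing penalty sequence, and finally telescoping where the paper inducts --- an immaterial difference. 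Your explicit justification that the discarded decrease terms are nonnegative, via \eqref{deltam} and Lemma \ref{lem3.1}, is a detail the paper leaves implicit in its ``$\Phi$ is monotonous'' remark, and is a welcome addition rather than a deviation.
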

\begin{proof}
Assumption $\mathrm{(A1)}$ implies that
\begin{eqnarray}\label{fmin}
 f(x_k)\geq f_{\rm min}
\end{eqnarray}
for all $k\geq0$ and some constant $f_{\rm min}$.

Due to the fact that $\mu_k>0$ for all $k\geq0$, $\phi(x_k,\mu_k)\geq f_{\rm min}$ from \eqref{fmin}, and the definition of $\phi(x,\mu)$ in \eqref{phidef}, we have that
\begin{eqnarray*}
\frac {\phi(x_k,\mu_k)-f_{\rm min}}{\mu_k} \geq 0.
\end{eqnarray*}
Thus, \eqref{varphi1} holds.

Next, we prove the second conclusion using a mathematical induction. 

\textcolor{blue}{Since $k_i$ and $k_{i+1}$ are consecutive successful iterations, we have that}
\textcolor{blue}{$$x_l=x_{k_{i+1}}$$}
\textcolor{blue}{for $k_i+1\leq l \leq k_{i}.$}
Then, it follows from $\rho_{k_i}\geq \eta_1$ that
\begin{eqnarray*}
  \phi(x_{k_{i+1}},\mu_{k_i})-f_{\rm min}&=&\phi(x_{k_i+1},\mu_{k_i})-f_{\rm min}\\
  &\leq& \phi(x_{k_i},\mu_{k_i})-f_{\rm min}-\eta_1\Delta q_{k_i}.
\end{eqnarray*}

\textcolor{blue}{Thus, we have from the definition of $\Phi(x,\mu)$ in \eqref{Phidef} that}
\begin{equation}\label{monoPhi}
  \Phi(x_{k_{i+1}},\mu_{k_i})\leq\Phi(x_{k_{i}},\mu_{k_i})-\eta_1\frac{\Delta q_{k_i}}{\mu_{k_i}}.
\end{equation}

\textcolor{blue}{It follows from the definition of $\phi(x,\mu)$ in \eqref{phidef} and the definition of $\Phi(x,\mu)$ in \eqref{Phidef} that}
\begin{eqnarray*}
 &&\textcolor{blue}{\Phi(x_{k_{i+1}},\mu_{k_i})-\Phi(x_{k_{i+1}},\mu_{k_{i+1}})}\\
 &\textcolor{blue}{=}&\textcolor{blue}{\frac {\phi(x_{k_{i+1}},\mu_{k_{i}})-f_{\rm min}}{\mu_{k_{i}}}-\frac {\phi(x_{k_{i+1}},\mu_{k_{i+1}})-f_{\rm min}}{\mu_{k_{i+1}}}}\\
 &\textcolor{blue}{=}&\textcolor{blue}{\frac {f(x_{k_{i+1}})+\mu_{k_{i}}\|c(x_{k_{i+1}})\|-f_{\rm min}}{\mu_{k_{i}}}-\frac {f(x_{k_{i+1}})+\mu_{k_{i+1}}\|c(x_{k_{i+1}})\|-f_{\rm min}}{\mu_{k_{i+1}}}}\\
 &\textcolor{blue}{=}&\textcolor{blue}{(f(x_{k_{i+1}})-f_{\rm min}) \left(\frac{1}{\mu_{k_i}}-\frac{1}{\mu_{k_{i+1}}}\right)\geq 0.}
\end{eqnarray*}

Then the fact that $\mu_{k_i}\leq\mu_{k_{i+1}}$  and \eqref{fmin} give that
\begin{eqnarray*}
 \Phi(x_{k_{i+1}},\mu_{k_i})-\Phi(x_{k_{i+1}},\mu_{k_{i+1}})=(f(x_{k_{i+1}})-f_{\rm min})
 \left(\frac{1}{\mu_{k_i}}-\frac{1}{\mu_{k_{i+1}}}\right)\geq 0,
\end{eqnarray*}
which implies that $$\Phi(x_{k_{i+1}},\mu_{k_{i+1}})\leq\Phi(x_{k_{i+1}},\mu_{k_i}).$$
Combining this inequality and \eqref{monoPhi}, we have that \eqref{varphi2} holds for $j=1$.

\textcolor{blue}{Furthermore, since Assumption ${\rm (A4)}$ holds, from \eqref{deltam} and Lemma \ref{lem3.1}, we have that}
\begin{eqnarray*}
\textcolor{blue}{\Delta q_{k_i}\geq\nu\mu_{k_i}\Delta q^N_{k_i}\geq {\rm min}\left\{\|c(x_{k_i})\|, \frac{\theta\sqrt{\beta_{k_i}}}{\kappa_{\rm n}}\right\}\geq 0.}
\end{eqnarray*}
\textcolor{blue}{It then follows from \eqref{varphi2} with $j=1$ that
 $\Phi(x_{k_{i+1}},\mu_{k_{i+1}})\leq\Phi(x_{k_i},\mu_{k_i})$, that is, $\{\Phi(x_{k_i},\mu_{k_i})\}$ is monotonic. }

We now assume that \eqref{varphi2} holds for $j-1$. In the following, we show that \eqref{varphi2} holds for $j$.

Due to
\begin{eqnarray*}
 \Phi(x_{k_{i+j-1}},\mu_{k_{i+j-1}})\leq\Phi(x_{k_i},\mu_{k_i})-\eta_1\frac{\Delta q_{k_i}}{\mu_{k_i}},
\end{eqnarray*}
we have that
\begin{eqnarray*}
 \Phi(x_{k_{i+j}},\mu_{k_{i+j}})\leq\Phi(x_{k_{i+j-1}},\mu_{k_{i+j-1}})\leq\Phi(x_{k_i},\mu_{k_i})-\eta_1\frac{\Delta q_{k_i}}{\mu_{k_i}}.
\end{eqnarray*}
Therefore, \eqref{varphi2} holds for any $j>0$. Then the proof of this lemma is completed.
\end{proof}
\subsection{Feasibility}
The following theorem shows the feasibility of the algorithm. The index set of successful iterations of the algorithm is denoted as
\begin{eqnarray*}
 \mathcal{S}:=\{k\geq0:~k~{\rm successful ~or ~very ~successful, i.e., \rho_k\geq\eta_1}\}.
\end{eqnarray*}
\begin{theorem}\label{thm3.1}
Let Assumptions $\mathrm{(A1)-(A4)}$ hold. Then
\begin{eqnarray*}
 \lim_{k\rightarrow\infty}c(x_k)=0.
\end{eqnarray*}
\end{theorem}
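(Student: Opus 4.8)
The plan is to turn the monotone decrease of the merit function along successful iterations into decay of the vertical-model reduction $\Delta q_k^N$, and then read off feasibility from the lower bound in Lemma~\ref{lem3.1}. Let $\{k_i\}$ enumerate the successful iterations. By Lemma~\ref{lem3.6} the numbers $\Phi(x_{k_i},\mu_{k_i})$ are nonnegative and nonincreasing, hence convergent; telescoping \eqref{varphi2} gives $\eta_1\sum_i \Delta q_{k_i}/\mu_{k_i}\le\Phi(x_{k_0},\mu_{k_0})<\infty$, so $\Delta q_{k_i}/\mu_{k_i}\to 0$. The significant-decrease requirement \eqref{deltam}, written as $\Delta q_{k_i}/\mu_{k_i}\ge\nu\,\Delta q_{k_i}^N$, then forces $\Delta q_{k_i}^N\to 0$, and Lemma~\ref{lem3.1} yields $\min\{\|c(x_{k_i})\|,\ \theta\sqrt{\beta_{k_i}}/\kappa_{\rm n}\}\to 0$.

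Before the main argument I would dispose of the degenerate case in which only finitely many iterations are successful. After the last accepted step the iterate is frozen at some $x_\ast$, every later trial is rejected, and the update rule drives $\beta_k\to 0$. Assuming $c(x_\ast)\neq 0$, Lemma~\ref{lem3.1} gives $\Delta q_k^N\ge\theta\sqrt{\beta_k}/\kappa_{\rm n}$ for small $\beta_k$, hence $\Delta q_k\ge\nu\mu_k\theta\sqrt{\beta_k}/\kappa_{\rm n}$ by \eqref{deltam}, whereas $\|d_k\|^2=O(\beta_k)$ follows from $\|v_k\|\le\sqrt{\beta_k}$ and Lemma~\ref{lem3.3}; the model-error bound \eqref{pmerror} then gives $1-\rho_k\le\kappa_0(1+\mu_k)\|d_k\|^2/\Delta q_k\to 0$, so eventually $\rho_k\ge\eta_1$ and a trial is accepted, contradicting that all later trials are rejected. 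Hence $c(x_\ast)=0$ and the claim holds in this case.

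The principal obstacle is the case of infinitely many successful iterations, where I must exclude that the minimum above vanishes through $\beta_{k_i}\to 0$ while the infeasibility persists. Suppose, toward a contradiction, that $\|c(x_{k_i})\|\ge\epsilon>0$ along a subsequence; then necessarily $\beta_{k_i}\to 0$. The same estimate as above, now with $\Delta q_k\ge\nu\mu_k\min\{\epsilon,\theta\sqrt{\beta_k}/\kappa_{\rm n}\}$ together with \eqref{pmerror} and the uniformly bounded factor $(1+\mu_k)/\mu_k\le 1+1/\mu_{-1}$, shows that whenever $\|c(x_k)\|\ge\epsilon$ and $\beta_k$ is small enough one has $\rho_k>\eta_2$; such a step is accepted and \emph{increases} $\beta$. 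Since $\beta$ is reduced only on rejected steps and by a controlled amount there, this produces a positive lower bound $\beta_k\ge\beta_{\min}(\epsilon)$ for all $k$ with $\|c(x_k)\|\ge\epsilon$, contradicting $\beta_{k_i}\to 0$. Therefore $\|c(x_{k_i})\|\to 0$ along the successful iterations.

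Finally, since the iterate does not change between consecutive successful iterations, $\|c(x_{k_i})\|\to 0$ propagates to $c(x_k)\to 0$ for the whole sequence, which is the assertion. The two points that need the most care are the uniform control of $(1+\mu_k)/\mu_k$, guaranteed by $\mu_k\ge\mu_{-1}>0$ and the monotonicity of $\mu_k$ from \eqref{miuk}, and the bookkeeping in the $\beta$-reduction loop of Algorithm~\ref{Alg2} ensuring that a single rejected step cannot undershoot the critical threshold $\beta_{\min}(\epsilon)$ by an uncontrolled factor.
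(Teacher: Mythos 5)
Your opening reduction is correct: telescoping \eqref{varphi2} along successful iterations gives $\eta_1\sum_i \Delta q_{k_i}/\mu_{k_i}\le\Phi(x_{k_0},\mu_{k_0})<\infty$, so by \eqref{deltam} and \eqref{lem1} you get $\min\{\|c(x_{k_i})\|,\theta\sqrt{\beta_{k_i}}/\kappa_{\rm n}\}\to 0$; your disposal of the finitely-many-successes case is also sound and essentially reproduces the mechanism of Lemma~\ref{sigmabounded}. The genuine gap is the pivotal claim of your third step, namely that ``$\beta$ is reduced only on rejected steps and by a controlled amount'' yields a uniform bound $\beta_k\ge\beta_{\min}(\epsilon)$ for \emph{all} $k$ with $\|c(x_k)\|\ge\epsilon$. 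Your very-successful threshold forbids rejections only at iterates that are themselves $\epsilon$-infeasible: when $\|c(x_k)\|\ge\epsilon$ and $\beta_k$ is small, \eqref{deltam}, \eqref{lem1} and \eqref{fimerror} indeed force $\rho_k\ge\eta_2$. But it says nothing about reductions of $\beta$ performed while the iterates are nearly feasible, where the guaranteed decrease $\nu\mu_k\min\{\|c(x_k)\|,\theta\sqrt{\beta_k}/\kappa_{\rm n}\}$ degenerates and rejections can drive $\beta$ arbitrarily close to zero (when $c(x_k)=0$ the relevant threshold depends on $\|g_k^Z\|$, not on $\epsilon$). The iterates can then drift back into the region $\{\|c\|\ge\epsilon\}$ carrying an arbitrarily small $\beta$ --- the subsequent very successful steps only multiply $\beta$ by $\gamma_2$, so your subsequence $\{k_i\}$ can catch precisely these re-entry moments with $\beta_{k_i}\to 0$. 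Nothing in your argument excludes this scenario, and it is fully consistent with your step-1 conclusion $\Delta q^N_{k_i}\to 0$, so no contradiction is reached.

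What is needed --- and what the paper's proof of Theorem~\ref{thm3.1} supplies --- is a coupling between $\beta$ and the distance traveled. The paper fixes an arbitrary infeasible $x_l$, works in the ball $\mathcal{O}_l$ of radius $\|c(x_l)\|/(2L_1)$ from \eqref{Ol} on which $\|c(x)\|>\tfrac12\|c(x_l)\|$, shows the iterates must eventually leave $\mathcal{O}_l$ (else $\Phi$ from Lemma~\ref{lem3.6} would decrease by a fixed amount per successful step and become negative), and then uses the step bound $\|d_k\|\le M_{\rm d}\sqrt{\beta_k}$ of \eqref{d39} to get the path-length estimate $\sum_i\sqrt{\beta_{k_i}}\ge\|c(x_l)\|/(4M_{\rm d}L_1)$ in \eqref{cxl}. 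Since each successful step inside $\mathcal{O}_l$ decreases $\Phi$ by at least $\eta_1\nu\theta\sqrt{\beta_{k_i}}/\kappa_{\rm n}$ (or by $\tfrac12\eta_1\nu\|c(x_l)\|$ outright if some $\beta_{k_i}$ is large), every excursion through an infeasible point costs the bounded monotone quantity $\Phi$ at least $\kappa_c\|c(x_l)\|$, which is what actually kills the ``re-entry with tiny $\beta$'' scenario: crossing the infeasibility band with small $\beta$ requires many small steps whose accumulated $\Phi$-decrease is bounded below independently of $\beta$. Your summability bound is strong enough to feed into such an argument, but it must be combined with this Lipschitz/path-length mechanism rather than with the claimed uniform lower bound on $\beta$, which is false as stated. (Your final propagation from successful iterates to the whole sequence is fine, and your caution about the undershoot in the $\beta$-reduction loop is warranted --- the paper's own proofs tacitly use $\gamma_1\beta_k\le\beta_{k+1}$ --- but that bookkeeping issue is secondary to the gap above.)
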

\begin{proof}
Since the result is trivial if $c(x_l)=0$ for all sufficiently large $l$, we consider an arbitrary infeasible iteration $x_l$ such that $\|c(x_l)\|>0$.
Our first aim is to show that Algorithm \ref{Alg2} ($\mathrm{SSARC_{q}K}$) cannot stop at such a point.

The Lipschitz continuity of $\|c(x)\|$ from Assumption (A2) gives that
\begin{eqnarray}\label{L1}
 | \|c(x)\|-\|c(x_l)\| | \leq L_1\|x-x_l\|
\end{eqnarray}
for some $L_1>0$ and all $x$. Thus, \eqref{L1} certainly holds for all $x$ in an open ball
\begin{eqnarray}\label{Ol}
 \mathcal{O}_l\overset{\rm def}{=}\left\{x~|~\|x-x_l\|<\frac{\|c(x_l)\|}{2L_1}\right\}.
\end{eqnarray}
It then follows immediately from \eqref{L1} and \eqref{Ol} that
\begin{eqnarray*}
| \|c(x)\|-\|c(x_l)\| |<\frac{1}{2}\|c(x_l)\|.
\end{eqnarray*}
Thus, we have that
\begin{eqnarray}\label{cl}
 \|c(x)\|>\frac{1}{2}\|c(x_l)\|,
\end{eqnarray}
which ensures that $c(x)$ is bounded away from zero for any $x\in\mathcal{O}_l$. 

From Assumption (A4) and Lemma \ref{lem3.1}, we have that \eqref{lem1} holds. Together with \eqref{deltam} and \eqref{cl}, we get that
\begin{equation}\label{cll}
 \Delta q_k  \geq \nu\mu_k{\rm min}\left\{\|c(x_k)\|, \frac{\theta\sqrt{\beta_k}}{\kappa_{\rm n}}\right\}\geq \nu\mu_k{\rm min}\left\{\frac{1}{2}\|c(x_l)\|, \frac{\theta\sqrt{\beta_k}}{\kappa_{\rm n}}\right\}
\end{equation}
for any $x_k\in\mathcal{O}_l$.


Let
\begin{eqnarray}\label{sigmal}
 \beta^0_l={\rm min}\left\{\left(\frac{\kappa_{\rm n}\|c(x_l)\|}{2\theta}\right)^{2}, \left(\frac{(1-\eta_2)\nu\theta\mu_{-1}}{\kappa_{\rm m}\kappa_{\rm n}(1+\mu_{-1})}\right)^{2}\right\}.
\end{eqnarray}

\textcolor{blue}{From Assumptions $\mathrm{(A2)-(A4)}$, \eqref{fimerror} holds by Lemma \ref{lem3.5}.}

If $\beta_k\leq\beta^0_l$, it follows from \eqref{cll} and \eqref{sigmal} that
\begin{eqnarray*}
 \Delta q_k\geq\nu\mu_k\frac{\theta\sqrt{\beta_k}}{\kappa_{\rm n}},
\end{eqnarray*}
which, together with \eqref{fimerror} and the fact that $\{\mu_k\}$ is nondecreasing, gives that
\begin{eqnarray*}
1-\rho_k&=&\frac{\phi(x_k+d_k,\mu_k)-q(x_k,B_k,\mu_k,d_k)}{\Delta q_k}  \\
&\leq&\frac{\kappa_{\rm m}\kappa_{\rm n}(1+\mu_k)\sqrt{\beta_k}}{\nu\theta\mu_k}\leq \frac{\kappa_{\rm m}\kappa_{\rm n}(1+\mu_{-1})\sqrt{\beta_k}}{\nu\theta\mu_{-1}}\leq 1-\eta_2.
\end{eqnarray*}
Thus, so long as $\beta_k\leq\beta^0_l$, a very successful iteration will be obtained for any $x_k\in\mathcal{O}_l$.

Next, we show that there must be a first successful iterate $x_{k_j},k_j>k_0:=l$, not in $\mathcal{O}_l$.
Suppose that all successful iterates $x_{k_i}\in\mathcal{O}_l$. Then
\begin{eqnarray}\label{deltal}
  \beta_{k_i}\geq\beta^0:=\gamma_1\min\{\beta_l,\beta^0_l\}.
\end{eqnarray}

Since iteration $k_i\in\mathcal{S}$ is successful, \eqref{varphi2} holds from Assumption (A1) and Lemma \ref{lem3.6}. Then from \eqref{varphi2},  \eqref{cll} and \eqref{deltal} we can get that
\begin{eqnarray*}
  \Phi(x_{k_{i+1}},\mu_{k_{i+1}})&\leq&\Phi(x_{k_i},\mu_{k_i})-\eta_1\frac{\Delta q_{k_i}}{\mu_{k_i}}\\
  &\leq&\Phi(x_{k_i},\mu_{k_i})-\eta_1\nu\min\left\{\frac{1}{2}\|c(x_l)\|, \frac{\theta\sqrt{\beta^0}}{\kappa_{\rm n}}\right\}.
\end{eqnarray*}
By summing the first $j$ of these successful iterations, we have that
\begin{eqnarray*}
  \Phi(x_{k_{i+j}},\mu_{k_{i+j}})&\leq&\Phi(x_{k_i},\mu_{k_i})-\eta_1\frac{\Delta q_{k_i}}{\mu_{k_i}}-\cdot\cdot\cdot-\eta_1\frac{\Delta q_{k_{i+j-1}}}{\mu_{k_{i+j-1}}}\\
  &\leq&\Phi(x_{k_i},\mu_{k_i})-j\eta_1\nu\min\left\{\frac{1}{2}\|c(x_l)\|, \frac{\theta\sqrt{\beta^0}}{\kappa_{\rm n}}\right\}.
\end{eqnarray*}

Since the right-hand side of this inequality can be made arbitrarily negative by increasing $j$, $\Phi(x_{k_{i+j}},\mu_{k_{i+j}})$ must eventually be negative. However, this \textcolor{blue}{contradicts} \eqref{varphi1}. Our hypothesis that $x_{k_i}$ \textcolor{blue}{($k_i\in\mathcal{S}$, $k_i\geq l$)} remain in $\mathcal{O}_l$ must be false, and thus there must be a first iterate $x_{k_j},j>0$, not in $\mathcal{O}_l$.

We now consider the iterates between $x_{k_j}$ and $x_{k_0}=x_l$. \eqref{varphi2} and \eqref{cll} imply that
\begin{eqnarray}\label{varphik}
  \Phi(x_{k_{m}},\mu_{k_{m}})\leq\Phi(x_{k_i},\mu_{k_i})-\eta_1\nu\min\left\{\frac{1}{2}\|c(x_l)\|, \frac{\theta\sqrt{\beta_{k_i}}}{\kappa_{\rm n}}\right\}
\end{eqnarray}
for any $i < m \leq j$, since $x_{k_i}\in\mathcal{O}_l$. If
\begin{eqnarray*}
  \beta_{k_i}\geq\left(\frac{\kappa_{\rm n}\|c(x_l)\|}{2\theta} \right)^{2},
\end{eqnarray*}
\eqref{varphi2} and \eqref{varphik} with $m=j$ give that
\begin{eqnarray}\label{varphikj1}
  \Phi(x_{k_j},\mu_{k_j})\leq\Phi(x_{k_i},\mu_{k_i})-\frac{1}{2}\eta_1\nu\|c(x_l)\|
  \leq\Phi(x_l,\mu_l)-\frac{1}{2}\eta_1\nu\|c(x_l)\|.
\end{eqnarray}
On the other hand, if
$
  \beta_{k_i}<\left(\frac{\kappa_{\rm n}\|c(x_l)\|}{2\theta} \right)^{2}
$
for all $0\leq i\leq j$, we have that
\begin{eqnarray}\label{varphiki}
 \Phi(x_{k_{i+1}},\mu_{k_{i+1}})\leq\Phi(x_{k_i},\mu_{k_i})-
 \frac{\eta_1\nu\theta\sqrt{\beta_{k_i}}}{\kappa_{\rm n}}
\end{eqnarray}
for each $0\leq i\leq j$. \textcolor{blue}{Next, by taking the sum of \eqref{varphiki} for each $i$ from $0$ to $j-1$,} we get that
\begin{eqnarray}\label{varphi11}
  \Phi(x_{k_j},\mu_{k_j})\leq\Phi(x_l,\mu_l)-\frac{\eta_1\nu\theta}{\kappa_{\rm n}}
  \sum_{i=0}^{j-1}\sqrt{\beta_{k_i}}.
\end{eqnarray}
But as $x_{k_j}\notin\mathcal{O}_l$, \eqref{d39} and \eqref{Ol} imply that
\begin{eqnarray}\label{cxl}
 \frac{\|c(x_l)\|}{2L_1}\leq\|x_{k_j}-x_l\|\leq \sum_{i=0}^{j-1}\|x_{k_{i+1}}-x_{k_i}\| \leq \textcolor{blue}{M_{\rm d}}
 \sum_{i=0}^{j-1}\sqrt{\beta_{k_i}}.
\end{eqnarray}
Then \eqref{varphi11} and \eqref{cxl} give that
\begin{eqnarray}\label{varphikj2}
  \Phi(x_{k_j},\mu_{k_j})\leq\Phi(x_l,\mu_l)-\frac{\eta_1\nu\theta\|c(x_l)\|}{2M_{\rm d}L_1\kappa_{\rm n}}.
\end{eqnarray}
Thus in all cases \eqref{varphikj1} and \eqref{varphikj2} imply that
\begin{eqnarray}\label{Them1}
 \Phi(x_{k_j},\mu_{k_j})\leq\Phi(x_l,\mu_l)-\kappa_c\|c(x_l)\|,
\end{eqnarray}
where $\kappa_{\rm c}
 :=\frac{1}{2}\eta_1\nu\textcolor{blue}{{\min}}\left\{1,\frac{\theta}{M_{\rm d}L_1\kappa_{\rm n}} \right\}$. \eqref{Them1} proves that $\|c(x_l)\|$ cannot be bounded away from zero since $l$ is arbitrary and  $\{\Phi(x_{k_j},\mu_{k_j})\}$ is decreasing and bounded from below from Lemma \ref{lem3.6}.
\end{proof}
\subsection{Optimality}
The feasibility of the algorithm has been proved in Theorem \ref{thm3.1}. Then we turn to the optimality condition.
\begin{lemma}\label{thm3.2}
\textcolor{blue}{Let Assumptions $\mathrm{(A1)}$, $\mathrm{(A3)}$, and $\mathrm{(A4)}$ hold} for all $k$ sufficiently large. Then there exist constants $\mu_{\max}>0$ and $\kappa_{\rm mk}>0$, and an index $k_1$ such that
\begin{eqnarray}\label{mkt1}
 \Delta q_k\geq\Delta q^H_k+(\mu_k-\kappa_{\rm mk})\Delta q^N_k
\end{eqnarray}
and
\begin{eqnarray}\label{mumax}
 \mu_k=\mu_{\max}
\end{eqnarray}
for all $k\geq k_1$.
\end{lemma}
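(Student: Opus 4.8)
The plan is to derive both conclusions from a single uniform estimate that controls the feasibility-model term $\Delta q^F_k$ by the normal decrease $\Delta q^N_k$. First I would expand $\Delta q^F_k=-g_k^Tv_k-\frac{1}{2}v_k^TB_kv_k$ and bound its magnitude: Assumptions $\mathrm{(A1)}$ and $\mathrm{(A3)}$ give uniform bounds $\|g_k\|\le\kappa_{\rm g}$ and $\|B_k\|\le\kappa_{\rm B}$, while $\|v_k\|\le\kappa_{\rm n}\|c(x_k)\|$ from \eqref{star} together with the boundedness of $c$ shows that $\|v_k\|$ is uniformly bounded. Hence $|\Delta q^F_k|\le C\|v_k\|$ for a constant $C$ collecting these quantities. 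Combining this with \eqref{kbon}, i.e. $\kappa_{\rm mn}\|v_k\|\le\Delta q^N_k$, yields the central estimate $|\Delta q^F_k|\le\kappa_{\rm mk}\Delta q^N_k$ with $\kappa_{\rm mk}=C/\kappa_{\rm mn}$, valid for every $k$ (trivially when $v_k=0$ forces $\Delta q^F_k=\Delta q^N_k=0$).

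Given this, \eqref{mkt1} is immediate: substituting the lower bound $\Delta q^F_k\ge-\kappa_{\rm mk}\Delta q^N_k$ into the decomposition $\Delta q_k=\Delta q^H_k+\mu_k\Delta q^N_k+\Delta q^F_k$ gives $\Delta q_k\ge\Delta q^H_k+(\mu_k-\kappa_{\rm mk})\Delta q^N_k$.

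For \eqref{mumax} I would bound the candidate $\mu^c_k=-(\Delta q^F_k+\Delta q^H_k)/((1-\nu)\Delta q^N_k)$ from \eqref{muc} above. Since $\Delta q^H_k\ge0$ by Lemma \ref{lem3.2} and $-\Delta q^F_k\le\kappa_{\rm mk}\Delta q^N_k$, whenever $\Delta q^N_k>0$ the factor $\Delta q^N_k$ cancels and $\mu^c_k\le\kappa_{\rm mk}/(1-\nu)$ uniformly in $k$. I then examine the update rule \eqref{miuk}: $\mu_k$ is raised only when $\mu_{k-1}<\mu^c_k$, and any such raise increases it by at least $\tau_2$ since $\mu_k\ge\mu_{k-1}+\tau_2$. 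Because $\mu^c_k$ is uniformly bounded above, only finitely many increases are possible, so there is an index $k_1$ past which $\mu_{k-1}\ge\mu^c_k$ always holds; consequently $\mu_k=\mu_{k-1}=:\mu_{\rm max}$ for all $k\ge k_1$.

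The step I expect to be the main obstacle is making the estimate on $\Delta q^F_k$ genuinely uniform and robust to the degenerate regime where $\Delta q^N_k\to0$ (which occurs as $c(x_k)\to0$ by Theorem \ref{thm3.1}): I must ensure $|\Delta q^F_k|\le\kappa_{\rm mk}\Delta q^N_k$ persists when $\Delta q^N_k$ vanishes and that the very same constant $\kappa_{\rm mk}$ serves in both \eqref{mkt1} and the upper bound on $\mu^c_k$. Anchoring both conclusions to this one estimate is what keeps the argument tight.
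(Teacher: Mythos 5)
Your proposal is correct and follows essentially the same route as the paper: bound $\Delta q^F_k\geq-\kappa_{\rm mk}\Delta q^N_k$ using the boundedness of $g_k$, $B_k$, $c(x_k)$ together with \eqref{kbon}, plug this into the decomposition of $\Delta q_k$ to get \eqref{mkt1}, and then conclude \eqref{mumax} because $\mu^c_k$ is uniformly bounded by $\kappa_{\rm mk}/(1-\nu)$ while each update of $\mu_k$ in \eqref{miuk} increases it by at least a fixed amount, allowing only finitely many increases. One cosmetic remark: since the lemma only assumes $\mathrm{(A1)-(A3)}$ and \eqref{kbon}, you should obtain the bound on $\|v_k\|$ from $\|v_k\|\leq\Delta q^N_k/\kappa_{\rm mn}\leq\|c(x_k)\|/\kappa_{\rm mn}$ rather than from \eqref{star}, which rests on $\mathrm{(A4)}$.
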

\begin{proof}
We know from \eqref{deltam} that $\mu_k$ satisfies
\begin{equation}\label{mkt2}
  \Delta q_k\geq\nu\mu_k\Delta q^N_k.
\end{equation}
From Assumption (A4) and Lemma \ref{lem3.7}, \eqref{kbon} holds.
Since Assumptions (A1) and (A3) imply that both $g_k$ and $B_k$ are bounded, \eqref{kbon} gives that 
$\|v_k\|\leq\Delta q^N_k/\kappa_{\rm mn}$ and $\Delta q^N_k\leq\|c(x_k)\|$.

From Assumption (A1), both $\|c(x_k)\|$ and $\|g_k\|$ are bounded. Thus, \textcolor{blue}{by the definition of $q^F(x,B,v)$ in $\eqref{qFdef}$ and the definition of $\Delta q^F_k$ in \eqref{deltaqFdef},} we have that
\begin{eqnarray}\label{59}
\begin{aligned}
  \Delta q^F_k &= -g_k^Tv_k-\frac{1}{2}v_k^TB_kv_k  \\
  &\geq-\|g_k\|\|v_k\|-\frac{1}{2}\kappa_{\rm B}\|v_k\|^2 =\|v_k\|(-\|g_k\|-\frac{1}{2}\kappa_{\rm B}\|v_k\|) \\
  &\geq\frac{\Delta q_k^N}{\kappa_{\rm mn}}\left(-\|g_k\|-\frac{1}{2}\kappa_{\rm B}\frac{\|c(x_k)\|}{\kappa_{\rm mn}}\right)\geq-\kappa_{\rm mk}\Delta q_k^N
  \end{aligned}
\end{eqnarray}
for some constant $\kappa_{\rm mk}>0$. Then \eqref{deltam} and \eqref{59} give that
$$
 \Delta q_k =  \Delta q_k^H+\mu_k\Delta q_k^N+\Delta q_k^F \geq  \Delta q_k^H+(\mu_k-\kappa_{\rm mk})\Delta q_k^N.
$$
The conclusion of \eqref{mkt1} is true.

Due to $\Delta q_k^H > 0$, we have that
\begin{eqnarray*}
 \Delta q_k\geq(\mu_k-\kappa_{\rm mk})\Delta q_k^N.
\end{eqnarray*}
Hence \eqref{mkt2} is evidently satisfied for all $\mu_k\geq\frac{\kappa_{\rm mk}}{1-\nu}$. In our algorithm, every increase of an insufficient $\mu_{k-1}$ must be by at least ${\max}\{(\tau_1-1)\mu_0, \tau_2\}$, and therefore the penalty parameter $\mu_k$ can only be increased a finite number of times. The required value $\mu_{\max}$ is at most the first value of $\mu_k$ that exceeds $\frac{\kappa_{\rm mk}}{1-\nu}$. This proves the lemma.
\end{proof}
\begin{lemma}\label{lem3.8}
\textcolor{blue}{Let Assumptions $\mathrm{(A1)}$, $\mathrm{(A3)}$, and $\mathrm{(A4)}$ hold} for all $k$ sufficiently large. Then, there exists a constant $\kappa_{\rm btm}>0$ and an index $k_1$ such that
\begin{eqnarray}\label{kbtm}
 \Delta q_k\geq\kappa_{\rm btm}\Delta q^H_k
\end{eqnarray}
for all $k\geq k_1$.
\end{lemma}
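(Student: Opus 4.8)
The plan is to obtain this as an immediate corollary of Lemma~\ref{thm3.2}. Because the hypotheses of the present lemma coincide with those of Lemma~\ref{thm3.2}, I may freely use both conclusions \eqref{mkt1} and \eqref{mumax} for all $k\geq k_1$. The natural starting point is \eqref{mkt1}, namely $\Delta q_k\geq\Delta q^H_k+(\mu_k-\kappa_{\rm mk})\Delta q^N_k$, which already isolates $\Delta q^H_k$ up to a remainder proportional to $\Delta q^N_k$; the whole task reduces to showing that this remainder is nonnegative once the penalty parameter has settled.

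The key step is to check the sign of the coefficient $\mu_k-\kappa_{\rm mk}$. By \eqref{mumax} we have $\mu_k=\mu_{\rm max}$ for every $k\geq k_1$, and the stabilization argument in the proof of Lemma~\ref{thm3.2} guarantees $\mu_{\rm max}\geq\kappa_{\rm mk}/(1-\nu)$. Since $0<\nu<1$ forces $1/(1-\nu)>1$, this yields $\mu_{\rm max}>\kappa_{\rm mk}$, so that $\mu_k-\kappa_{\rm mk}>0$ for all $k\geq k_1$. It then remains only to recall that $\Delta q^N_k\geq0$, which is immediate from the lower bound in \eqref{lem1} (a minimum of two nonnegative quantities), or equivalently from \eqref{kbon}.

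Combining these facts, the product $(\mu_k-\kappa_{\rm mk})\Delta q^N_k$ is nonnegative for all $k\geq k_1$, and discarding it from \eqref{mkt1} gives $\Delta q_k\geq\Delta q^H_k$. Hence the asserted inequality holds with the explicit constant $\kappa_{\rm btm}=1$. I expect no genuine difficulty here: the lemma is essentially a bookkeeping consequence of Lemma~\ref{thm3.2}, and the only point meriting care is extracting the strict inequality $\mu_{\rm max}>\kappa_{\rm mk}$ from the penalty-stabilization mechanism, which is precisely what licenses dropping the $\Delta q^N_k$ term at no cost.
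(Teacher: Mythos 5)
Your reduction to Lemma~\ref{thm3.2} is the right starting point and matches the paper's strategy, but the step where you claim $\mu_{\rm max}\geq\kappa_{\rm mk}/(1-\nu)$ is a genuine gap, and it is exactly the point where the paper's proof diverges from yours. The stabilization argument in Lemma~\ref{thm3.2} establishes only an \emph{upper} bound on where the penalty parameter can settle: once $\mu_k$ exceeds $\kappa_{\rm mk}/(1-\nu)$ it is never increased again, so the number of increases is finite. It does not assert that $\mu_k$ ever reaches that threshold. The update rule \eqref{miuk} raises $\mu_{k-1}$ only when $\mu_{k-1}<\mu^c_k$, and $\mu^c_k$ defined in \eqref{muc} can stay small for the entire run --- indeed $\mu^c_k\leq 0$ whenever $\Delta q^F_k+\Delta q^H_k\geq 0$ --- so $\mu_{\rm max}$ may stabilize far below $\kappa_{\rm mk}$. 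Note also that $\kappa_{\rm mk}$ is a worst-case constant manufactured from the uniform bounds on $g_k$, $B_k$ and $c(x_k)$ in \eqref{59}; nothing ties the algorithm's actual penalty parameter to it. Consequently the coefficient $\mu_k-\kappa_{\rm mk}$ in \eqref{mkt1} may be negative, the term $(\mu_k-\kappa_{\rm mk})\Delta q^N_k$ cannot simply be discarded, and your conclusion $\kappa_{\rm btm}=1$ is unsupported.

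The paper repairs precisely this scenario by a two-case argument. If $-\frac{1}{2}\Delta q^H_k\leq(\mu_{\rm max}-\kappa_{\rm mk})\Delta q^N_k$, then \eqref{mkt1} directly gives $\Delta q_k\geq\frac{1}{2}\Delta q^H_k$ (this subsumes your favorable case, at the harmless cost of the factor $\frac{1}{2}$). If instead $-\frac{1}{2}\Delta q^H_k>(\mu_{\rm max}-\kappa_{\rm mk})\Delta q^N_k$, then since $\Delta q^H_k,\Delta q^N_k\geq 0$ one must have $\mu_{\rm max}\leq\kappa_{\rm mk}$, and the inequality rearranges to
\begin{equation*}
\Delta q^N_k>\frac{\Delta q^H_k}{2(\kappa_{\rm mk}-\mu_{\rm max})},
\end{equation*}
which combined with the sufficient-decrease condition \eqref{deltam}, $\Delta q_k\geq\nu\mu_{\rm max}\Delta q^N_k$, yields $\Delta q_k>\frac{\nu\mu_{\rm max}}{2(\kappa_{\rm mk}-\mu_{\rm max})}\Delta q^H_k$. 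Taking $\kappa_{\rm btm}=\frac{1}{2}\min\bigl\{1,\frac{\nu\mu_{\rm max}}{\kappa_{\rm mk}-\mu_{\rm max}}\bigr\}$ covers both cases. The essential idea you are missing is that when the linear term in \eqref{mkt1} works against you, the enforced decrease \eqref{deltam} lets you convert a lower bound on $\Delta q^N_k$ into a lower bound on $\Delta q_k$, bypassing any need for a lower bound on $\mu_{\rm max}$.
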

\begin{proof}
Since Assumptions $\mathrm{(A1)}$, $\mathrm{(A3)}$ and $\mathrm{(A4)}$ hold for all $k$ sufficiently large, \eqref{mkt1} and \eqref{mumax} hold from Lemma \ref{thm3.2}.

Consider $k\geq k_1$ as in Lemma \ref{thm3.2}, in which case $\mu_k=\mu_{\max}$.
Let
\begin{eqnarray*}
 \kappa_{\rm btm}:=\frac{1}{2}\min\left\{1,\frac{\nu\mu_{\max}}{\kappa_{\rm mk}-\mu_{\max}}\right\}.
\end{eqnarray*}

On the one hand, suppose that
\begin{eqnarray*}
 -\frac{1}{2}\Delta q^H_k >(\mu_{\max}-\kappa_{\rm mk})\Delta q^N_k.
\end{eqnarray*}
 Since both $\Delta q^H_k$ and $\Delta q^N_k$ are nonnegative, it must be that $\mu_{\max}\leq\kappa_{\rm mk}$, in which case \eqref{deltam} and \eqref{muc} imply that
\begin{eqnarray*}
  \Delta q_k>\frac{\nu\mu_{\max}}{2(\kappa_{\rm mk}-\mu_{\max})}\Delta q^H_k.
\end{eqnarray*}
This proves \eqref{kbtm}.

On the other hand, suppose that
\begin{eqnarray*}
 -\frac{1}{2}\Delta q^H_k\leq(\mu_{\max}-\kappa_{\rm mk})\Delta q^N_k.
\end{eqnarray*}
 In this case, \eqref{mkt1} implies that $\Delta q_k\geq\frac{1}{2}\Delta q^H_k$. Thus,
we complete the proof of the lemma.
\end{proof}
\begin{lemma}\label{sigmabounded}
\textcolor{blue}{Let Assumptions $\mathrm{(A2)-(A4)}$ hold.}
Suppose that the iterate $x_k$ generated by Algorithm \ref{Alg2} is not a first-order critical point for \eqref{NLP}. Then there exists a $\beta_k^0>0$ such that $\rho_k\geq \eta_2$ if $\beta_k\leq \beta_k^0$.
\end{lemma}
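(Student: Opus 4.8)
The plan is to bound $1-\rho_k$ from above by a quantity that tends to $0$ as $\beta_k\downarrow 0$, and then to define $\beta_k^0$ so that this quantity stays below $1-\eta_2$. First I would rewrite $1-\rho_k$ in a convenient form. Since $q(x_k,B_k,\mu_k,0)=\phi(x_k,\mu_k)$, the denominator of $\rho_k$ in \eqref{rhok1} is exactly $\Delta q_k$, and a one-line manipulation gives
\begin{equation*}
1-\rho_k=\frac{\phi(x_k+d_k,\mu_k)-q(x_k,B_k,\mu_k,d_k)}{\Delta q_k}.
\end{equation*}
The numerator is precisely the quantity controlled by \eqref{fimerror}, so $1-\rho_k\le \kappa_{\rm m}(1+\mu_k)\beta_k/\Delta q_k$. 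It then suffices to produce a lower bound on $\Delta q_k$ that scales like $\sqrt{\beta_k}$, since the resulting ratio $\beta_k/\sqrt{\beta_k}=\sqrt{\beta_k}$ can be driven below $1-\eta_2$.

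Because $x_k$ is not a first-order critical point for \eqref{NLP}, either $\|c(x_k)\|>0$ or the reduced gradient $\|g_k^Z\|>0$, and I would split the argument accordingly. In the infeasible case I would combine \eqref{deltam} with the lower bound \eqref{lem1}: once $\beta_k\le(\kappa_{\rm n}\|c(x_k)\|/\theta)^2$, the minimum in \eqref{lem1} is attained by the $\sqrt{\beta_k}$ term, so $\Delta q_k\ge \nu\mu_k\theta\sqrt{\beta_k}/\kappa_{\rm n}$ and hence $1-\rho_k\le \kappa_{\rm m}\kappa_{\rm n}(1+\mu_k)\sqrt{\beta_k}/(\nu\theta\mu_k)$; this is exactly the estimate already employed in the proof of Theorem \ref{thm3.1}. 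In the feasible-but-nonstationary case $\Delta q_k^N=0$, so instead I would invoke Lemma \ref{lem3.8}, which for large $k$ gives $\Delta q_k\ge\kappa_{\rm btm}\Delta q_k^H$, followed by Lemma \ref{lem3.2}, which yields $\Delta q_k^H\ge\kappa_{\rm qkh}\sqrt{\beta_k\|g_k^Z\|^3}$; together these give $1-\rho_k\le \kappa_{\rm m}(1+\mu_k)\sqrt{\beta_k}/(\kappa_{\rm btm}\kappa_{\rm qkh}\|g_k^Z\|^{3/2})$.

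In both cases the right-hand side is a constant (depending on $x_k$ through $\|c(x_k)\|$ or $\|g_k^Z\|$) times $\sqrt{\beta_k}$. I would also use that $\mu_k/(1+\mu_k)$ is bounded away from zero, since $\mu_k\ge\mu_{-1}>0$ by the monotonicity of \eqref{miuk} (indeed $\mu_k=\mu_{\rm max}$ for large $k$ by Lemma \ref{thm3.2}), so the coefficient is finite and positive. Taking $\beta_k^0$ to be the largest value of $\beta_k$ for which the relevant coefficient times $\sqrt{\beta_k}$ is at most $1-\eta_2$ (and, in the infeasible case, not exceeding $(\kappa_{\rm n}\|c(x_k)\|/\theta)^2$ so that \eqref{lem1} selects the $\sqrt{\beta_k}$ branch) then forces $1-\rho_k\le 1-\eta_2$, i.e.\ $\rho_k\ge\eta_2$, whenever $\beta_k\le\beta_k^0$.

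I expect the main obstacle to be the feasible case: it cannot be treated by \eqref{deltam} and \eqref{lem1} alone, since $\Delta q_k^N$ vanishes there, so the lower bound on $\Delta q_k$ must be routed through the horizontal-step decrease, which requires the reduced-gradient estimate of Lemma \ref{lem3.2} and the domination bound of Lemma \ref{lem3.8} (both valid only for $k$ sufficiently large). The crux is to keep the dependence on $x_k$ explicit in $\beta_k^0$ and to verify that the $\sqrt{\beta_k}$ scaling of the denominator genuinely beats the $\beta_k$ scaling of the model error from \eqref{fimerror}.
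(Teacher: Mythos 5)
Your overall strategy is the paper's: rewrite $1-\rho_k$ as the model error over $\Delta q_k$, bound the numerator by \eqref{fimerror}, and split on whether $c(x_k)$ vanishes. In the infeasible case your argument is identical to the paper's, down to the threshold $\left(\kappa_{\rm n}\|c(x_k)\|/\theta\right)^2$ that activates the $\sqrt{\beta_k}$ branch of \eqref{lem1} and the bound $(1+\mu_k)/\mu_k\leq(1+\mu_{-1})/\mu_{-1}$ from the monotonicity of \eqref{miuk}.

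The genuine gap is in the feasible case. You route the lower bound on $\Delta q_k$ through Lemma \ref{lem3.8} and control $\mu_k$ via $\mu_k=\mu_{\rm max}$ from Lemma \ref{thm3.2}; as you yourself flag, both hold only for $k\geq k_1$. But the lemma is stated for an \emph{arbitrary} non-critical iterate $x_k$, with no restriction to large $k$, so your argument establishes only a weaker, asymptotic version; moreover it silently imports the hypotheses of Lemmas \ref{thm3.2} and \ref{lem3.8} (in particular \eqref{kbon}), which are not among the stated assumptions of this lemma. The paper closes this with an observation you missed: when $c(x_k)=0$, \eqref{nk} together with \eqref{nkc} forces $v_k=0$, hence $\Delta q_k^N=0$ and $\Delta q_k^F=0$, so that $\Delta q_k=\Delta q_k^H$ holds \emph{exactly} --- no factor $\kappa_{\rm btm}$, no appeal to Lemma \ref{lem3.8}, no index $k_1$. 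Lemma \ref{lem3.2} then gives $\Delta q_k\geq\kappa_{\rm qkh}\sqrt{\beta_k\|g_k^Z\|^3}$ directly, with $\|g_k^Z\|=\|Z_k^Tg_k\|\neq0$ because $x_k$ is feasible but not critical. For the penalty parameter the paper likewise avoids Lemma \ref{thm3.2}: letting $l_k$ be the last iteration with $c(x_{l_k})\neq0$, the update rule \eqref{miuk} yields $\mu_k\leq\bar{\mu}:=\max\{\tau_1\mu_{l_k},\mu_{l_k}+\tau_2\}$ at every such $k$, not only asymptotically. With these two replacements your estimate becomes $1-\rho_k\leq\kappa_{\rm m}(1+\bar{\mu})\sqrt{\beta_k}\big/\bigl(\kappa_{\rm qkh}\|g_k^Z\|^{3/2}\bigr)$, the choice $\beta_k^0=\bigl(\kappa_{\rm qkh}\|g_k^Z\|^{3/2}(1-\eta_2)/(\kappa_{\rm m}(1+\bar{\mu}))\bigr)^2$ goes through verbatim, and the lemma is proved for all $k$ as stated.
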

\begin{proof}
There are two cases to consider, namely, $c(x_k)\neq 0$ and $c(x_k)= 0$.

Firstly, suppose that $c(x_k)\neq 0$. 

From Assumption $\mathrm{(A4)}$, \eqref{lem1} holds by Lemma \ref{lem3.1}.
Then we have from \eqref{deltam} and \eqref{lem1} that
\begin{equation}\label{case1}
\Delta q_k \geq \nu\mu_k\min\left\{\|c(x_k)\|, \frac{\theta\sqrt{\beta_k}}{\kappa_{\rm n}}\right\}.
\end{equation}
From Assumptions $\mathrm{(A2)-(A4)}$, \eqref{fimerror} holds by Lemma \ref{lem3.5}.
Let
\begin{equation}\label{sigmak01}
\beta_k^0:=\min\left\{\left(\frac{\nu\theta\mu_{-1}(1-\eta_2)}{\kappa_{\rm m}\kappa_{\rm n}(1+\mu_{-1})}\right)^2, \left(\frac{\kappa_{\rm n}\|c(x_k)\|}{\theta}\right)^2\right\}.
\end{equation}
If $\beta_k\leq \beta_k^0$, \eqref{case1} and \eqref{sigmak01} show that
\begin{eqnarray*}
\Delta q_k \geq \nu\mu_k\frac{\theta\sqrt{\beta_k}}{\kappa_{\rm n}},
\end{eqnarray*}
which, together with \eqref{fimerror} and the fact that $\{\mu_k\}$ is nondecreasing, gives that
\begin{eqnarray*}
  1-\rho_k &=& \frac{\phi(x_k+d_k,\mu_k)-q(x_k,B_k,\mu_k,d_k)}{\Delta q_k} \\
   &\leq& \frac{\kappa_{\rm m}\kappa_{\rm n}(1+\mu_k)\sqrt{\beta_k}}{\nu\theta\mu_k}\leq \frac{\kappa_{\rm m}\kappa_{\rm n}(1+\mu_{-1})\sqrt{\beta_k}}{\nu\theta\mu_{-1}} \leq 1-\eta_2,
\end{eqnarray*}
 which implies that $\rho_k\geq \eta_2$ and $\beta_{k+1}\geq \beta_k$.

Now suppose instead that $c(x_k)=0$.  Since \eqref{nk} and \eqref{nkc} imply that $v_k=0$, we have from \eqref{deltam} that $\Delta q_k= \Delta q_k^H$. It then follows from Lemma \ref{lem3.2} that
\begin{equation}\label{nk0modeldecrease}
\Delta q_k \geq  \kappa_{\rm qkh}\sqrt{\beta_k\|g_k^Z\|^3}.
\end{equation}
Let $l_k$ be the index of the last iteration such that $c(x_{l_k})\neq 0$. Then
\begin{equation}\label{mubar}
\mu_k\leq \bar{\mu}:=\max\{\tau_1\mu_{l_k},\mu_{l_k}+\tau_2\}.
\end{equation}
Note that $\|g^Z_k\|\neq 0$ since $x_k$ is not a first-order critical point and $c(x_k)=0$.

Let
\begin{equation}\label{sigmakk0}
\bar{\beta}_k^0 := \left(\frac{\kappa_{\rm qkh}\|g_k^Z\|^{\frac{3}{2}}(1-\eta_2)}{\kappa_{\rm m}(1+ \bar{\mu})}\right)^2.
\end{equation}
If $\beta_k\leq \bar{\beta}_k^0$, we can obtain from \eqref{fimerror}, \eqref{nk0modeldecrease}, \eqref{mubar} and \eqref{sigmakk0} that
\begin{eqnarray*}
1-\rho_k &=& \frac{\phi(x_k+d_k,\mu_k)-q(x_k,B_k,\mu_k,d_k)}{\Delta q_k} \\
&\leq& \frac{\kappa_{\rm m}(1+\mu_k)\sqrt{\beta_k}}{\kappa_{\rm qkh}\|g_k^Z\|^{\frac{3}{2}}}\leq \frac{\kappa_{\rm m}(1+ \bar{\mu})\sqrt{\beta_k}}{\kappa_{\rm qkh}\|g_k^Z\|^{\frac{3}{2}}}\leq 1-\eta_2.
\end{eqnarray*}
Then, we also obtain that $\rho_k\geq \eta_2$ and $\beta_{k+1}\geq \beta_k$, which proves the lemma.
\end{proof}
\begin{theorem}\label{thm3.3}
\textcolor{blue}{Let Assumptions $\mathrm{(A1)-(A4)}$ hold.} 
Then
\begin{eqnarray*}
{\underset {k\rightarrow\infty}{\rm{\liminf}}}~ \|Z^T_kg_k\|=0.
\end{eqnarray*}
\end{theorem}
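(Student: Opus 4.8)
The plan is to argue by contradiction. Suppose the conclusion fails, so that there exist $\epsilon>0$ and an index $k_0$ with $\|Z_k^Tg_k\|\ge\epsilon$ for all $k\ge k_0$. The first step is to transfer this lower bound to the reduced gradient $g_k^Z=Z_k^T(g_k+B_kv_k)$ that actually drives the horizontal step. By Theorem \ref{thm3.1} we have $c(x_k)\to0$, and Lemma \ref{lem3.7} together with $\Delta q_k^N\le\|c(x_k)\|$ then forces $\|v_k\|\to0$. Since $\|Z_k^TB_kv_k\|\le\kappa_{\rm B}\|v_k\|$ by Assumption (A3), we obtain $\|g_k^Z\|\ge\|Z_k^Tg_k\|-\kappa_{\rm B}\|v_k\|\ge\epsilon/2$ for all $k$ sufficiently large.

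Next I would produce a lower bound on the model decrease $\Delta q_k$ in terms of $\beta_k$ alone. By Lemma \ref{thm3.2} the penalty parameter stabilizes, $\mu_k=\mu_{\rm max}$ for $k\ge k_1$, and by Lemma \ref{lem3.8} we have $\Delta q_k\ge\kappa_{\rm btm}\Delta q_k^H$. Feeding in the horizontal decrease bound of Lemma \ref{lem3.2} and the reduced-gradient lower bound from the previous step gives
\begin{equation*}
\Delta q_k\ge\kappa_{\rm btm}\kappa_{\rm qkh}\sqrt{\beta_k\|g_k^Z\|^3}\ge\kappa_{\rm btm}\kappa_{\rm qkh}\left(\tfrac{\epsilon}{2}\right)^{3/2}\sqrt{\beta_k}
\end{equation*}
for all sufficiently large $k$.

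The decisive step is to show that $\beta_k$ is bounded away from zero. Combining the last inequality with the error bound \eqref{fimerror}, namely $\phi(x_k+d_k,\mu_k)-q(x_k,B_k,\mu_k,d_k)\le\kappa_{\rm m}(1+\mu_{\rm max})\beta_k$, yields $1-\rho_k\le C\sqrt{\beta_k}$ for a constant $C$ depending only on $\epsilon,\kappa_{\rm m},\kappa_{\rm btm},\kappa_{\rm qkh},\mu_{\rm max}$; this is exactly the content of Lemma \ref{sigmabounded} once the reduced gradient is bounded below. Hence there is a uniform threshold $\beta^0>0$ such that any iteration with $\beta_k\le\beta^0$ is very successful and therefore increases $\beta$. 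Tracing the update rule for $\beta_k$ in Algorithm \ref{Alg2} then gives a uniform lower bound $\beta_k\ge\beta_{\min}>0$ for all large $k$.

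Finally I would telescope the merit decrease. On every successful iteration $\phi(x_k,\mu_{\rm max})-\phi(x_{k+1},\mu_{\rm max})\ge\eta_1\Delta q_k\ge\eta_1\kappa_{\rm btm}\kappa_{\rm qkh}(\epsilon/2)^{3/2}\sqrt{\beta_{\min}}$, a fixed positive amount; since there are infinitely many successful iterations and $\phi(\cdot,\mu_{\rm max})=f+\mu_{\rm max}\|c\|$ is bounded below by \eqref{fmin} together with $c(x_k)\to0$, the total decrease would be infinite, a contradiction. This forces $\liminf_k\|g_k^Z\|=0$, and along a subsequence on which $\|g_k^Z\|\to0$ the bound $\|Z_k^Tg_k\|\le\|g_k^Z\|+\kappa_{\rm B}\|v_k\|$ with $v_k\to0$ gives $\|Z_k^Tg_k\|\to0$. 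I expect the main obstacle to be precisely the uniform lower bound on $\beta_k$: the success threshold in Lemma \ref{sigmabounded} degrades with $\|c(x_k)\|$ in the infeasible case, so the argument must rest on the curvature-only decrease $\Delta q_k\ge\kappa_{\rm btm}\Delta q_k^H$ of Lemma \ref{lem3.8} rather than on the feasibility-based estimate, in order to keep the threshold $\beta^0$ independent of $c(x_k)$.
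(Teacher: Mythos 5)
Your proposal is correct and follows essentially the same path as the paper's own proof: contradiction hypothesis, transfer of the lower bound from $\|Z_k^Tg_k\|$ to $\|g_k^Z\|$ via Theorem \ref{thm3.1} and (A3), the decrease bound $\Delta q_k\geq\kappa_{\rm btm}\kappa_{\rm qkh}\sqrt{\beta_k\|g_k^Z\|^3}$ from Lemmas \ref{lem3.2}, \ref{thm3.2} and \ref{lem3.8}, the estimate $1-\rho_k\leq C\sqrt{\beta_k}$ from \eqref{fimerror} forcing $\beta_k\geq\gamma_1\beta^0$, and a telescoped merit decrease contradicting boundedness below. The only difference is cosmetic: you infer infinitely many successful iterations from the uniform lower bound on $\beta_k$ (an infinite unsuccessful tail would contract $\beta_k\to0$ since $\beta_{k+1}\leq\gamma_1\beta_k$), whereas the paper first concludes the number of successes is finite and then rules out the resulting unsuccessful tail by invoking Lemma \ref{sigmabounded} --- logically equivalent arguments.
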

\begin{proof}
Assume that $Z^T_kg_k$ is bounded away from zero, that is,
\begin{eqnarray}\label{klcg}
  \|Z^T_kg_k\|\geq2\epsilon>0
\end{eqnarray}
for some constant $\epsilon>0$ and for all $k$. 

\textcolor{blue}{$\|B_k\|$ is bounded from Assumption (A3). Due to the definition of $v_k$ in \eqref{nk}, \eqref{aa1h} and Assumption (A4),} $$\textcolor{blue}{\|v_k\|\leq\alpha_k \kappa_{\rm n}\|c(x_k)\|\leq\kappa_{\rm n}\|c(x_k)\|.}$$  
\textcolor{blue}{Then we have from \eqref{Zkbd} that}
$$\textcolor{blue}{0\leq\|Z_k^TB_kv_k\|\leq\|Z_k^T\| \|B_k\| \|v_k\|\leq \kappa_{\rm Z}\kappa_{\rm B}\kappa_{\rm n}\|c(x_k)\|.}$$ 
\textcolor{blue}{It then follows from Theorem \ref{thm3.1} that $Z_k^TB_kv_k$ tends to zero.}

Thus, by \eqref{klcg} and the definition of $g^Z_k$ in \eqref{Bzkgzkdef}, there exists an index $k_2$ such that
\begin{eqnarray}\label{lbg2}
  \|g^Z_k\|\geq\epsilon>0
\end{eqnarray}
for all $k\geq k_2$. 

From Assumptions $\mathrm{(A1)-(A3)}$, \eqref{lema2} and \eqref{kbtm} hold by Lemma \ref{lem3.2} and Lemma \ref{lem3.8}.
Then it follows from \eqref{lbg2}, \eqref{lema2}, and \eqref{kbtm} that
\begin{equation}\label{68}
 \Delta q_k\geq\kappa_{\rm btm}\kappa_{\rm qkh}\sqrt{\beta_k\|g_k^Z\|^3}\geq\kappa_{\rm btm}\kappa_{\rm qkh}\epsilon^{\frac{3}{2}}\sqrt{\beta_k}
\end{equation}
for all $k\geq k_3:={\max}\{k_1,k_2\}$.\\
Let
\begin{eqnarray}\label{sigma00}
 \bar{\beta}^0:=(\frac{\kappa_{\rm btm}\kappa_{\rm qkh}\epsilon^{\frac{3}{2}}(1-\eta_2)}{\kappa_{\rm m}(1+\mu_{\max})})^2.
\end{eqnarray}
Suppose that $\beta_k\leq\bar{\beta}^0$. Then, from \eqref{fimerror}, \eqref{mumax}, \eqref{68} and \eqref{sigma00}, we get
\begin{eqnarray*}
  1-\rho_k&=& \frac{\phi(x_k+d_k,\mu_k)-q(x_k,B_k,\mu_k,d_k)}{\Delta q_k}\\
   &\leq& \frac{\kappa_{\rm m}(1+\mu_{\max})}{\kappa_{\rm btm}\kappa_{\rm qkh}\epsilon^{\frac{3}{2}}}\sqrt{\beta_k}\leq 1-\eta_2,
\end{eqnarray*}
which implies that $\rho_k\geq\eta_2$. Therefore, the iteration must be very successful whenever $k\geq k_3$ and $\beta_k\leq\bar{\beta}^0$.

We now prove that
\begin{eqnarray}\label{gamma1}
 \beta_k>\gamma_1\bar{\beta}^0
\end{eqnarray}
for all $k\geq k_3$.

Suppose that \eqref{gamma1} is false. Let $k$ be the first iteration such that $\beta_{k+1}\leq\gamma_1\bar{\beta}^0$. Since $\gamma_1\beta_k\leq\beta_{k+1}$ follows from the Algorithm \ref{Alg2}, we have $\beta_k\leq\bar{\beta}^0$. But then, as concluded in the previous paragraph, iteration $k$ must have been very successful, and  thus $\beta_k\leq\beta_{k+1}$. This contradicts the assumption that $k$ is the first iteration such that $\beta_{k+1}\leq\gamma_1\bar{\beta}^0$. Hence, the inequality \eqref{gamma1} must hold.
So we get from \eqref{68} and \eqref{gamma1} that
\begin{eqnarray*}
  \Delta q_k\geq s_3:=\kappa_{\rm btm}\kappa_{\rm qkh}\epsilon\sqrt{\gamma_1\bar{\beta}^0\epsilon}.
\end{eqnarray*}
\textcolor{blue}{For all $k\in\mathcal{S}$ and $k\geq k_3$,}
\begin{eqnarray*}
\phi(x_k,\mu_{\max})-\phi(x_{k+1},\mu_{\max})\geq\eta_1\Delta q_k\geq\eta_1s_3.
\end{eqnarray*}
Summing the above over the set of successful iterations between iteration $k_3$ and $k$, and letting $\tau_k$ be the number of such successful iterations, we deduce that
\begin{eqnarray*}
 \phi(x_k,\mu_{\max})\leq\phi(x_{k_3},\mu_{\max})-\tau_k\eta_1s_3.
\end{eqnarray*}
Since Assumption (A1) implies that $\{\phi(x_k,\mu_{\max})\}$ is bounded from below, we obtain that $\tau_k$ is bounded. But this is impossible, since this would imply that there is an index $l$ such that $x_k=x_l$ for all $k\geq l$ after which the cubic regularisation parameter $\beta_k$ must converge to zero, which is not compatible with Lemma \ref{sigmabounded}. Therefore, our initial hypothesis \eqref{klcg} does not hold and the conclusion of this theorem is true.
\end{proof}

A stronger conclusion can be obtained under the following assumption.\\
\indent $\bf(A5)$~~~$Z^T_kg_k$ is uniformly continuous on the sequence of iterates $\{x_k\}$, namely, $\|Z^T_{t_i}g_{t_i}-Z^T_{l_i}g_{l_i}\|\rightarrow0, ~{\rm whenever}~\|x_{t_i}-x_{l_i}\|\rightarrow0,~i\rightarrow\infty$, where $\{x_{t_i}\}$ and $\{x_{l_i}\}$ are subsequences of $\{x_k\}$.

\begin{theorem}\label{thm3.4}
Let Assumptions $\mathrm{(A1)-(A5)}$ hold. Then
\begin{eqnarray*}
 \lim_{k\rightarrow\infty}\|Z^T_kg_k\|=0.
\end{eqnarray*}
\end{theorem}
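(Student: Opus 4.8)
The plan is to upgrade the $\liminf$ conclusion of Theorem \ref{thm3.3} to a full limit via the standard contradiction-with-subsequences argument that couples the sufficient-decrease bound to uniform continuity (A5). I would suppose, for contradiction, that $\lim_{k\to\infty}\|Z_k^Tg_k\|=0$ fails. Then there exists $\epsilon>0$ and an infinite subsequence of indices $\{t_i\}$ with $\|Z_{t_i}^Tg_{t_i}\|\ge 2\epsilon$. By Theorem \ref{thm3.3} we know $\liminf_k\|Z_k^Tg_k\|=0$, so between each spike at $t_i$ and some later index the quantity drops below $\epsilon$; this lets me define, for each $t_i$, the first subsequent index $l_i>t_i$ at which $\|Z_{l_i}^Tg_{l_i}\|<\epsilon$. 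Thus on the block $t_i\le k<l_i$ one has $\|Z_k^Tg_k\|\ge\epsilon$, and by the argument already used in Theorem \ref{thm3.3} (combining Theorem \ref{thm3.1} with (A3) to pass from $\|Z_k^Tg_k\|$ to $\|g_k^Z\|$) we get $\|g_k^Z\|\ge\epsilon/2$ for large $k$ on each such block.

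Next I would quantify the total displacement across each block. On successful iterations in the block, Lemma \ref{lem3.8} and Lemma \ref{lem3.2} give
\begin{eqnarray*}
\Delta q_k\ge\kappa_{\rm btm}\kappa_{\rm qkh}\sqrt{\beta_k\|g_k^Z\|^3}\ge\kappa_{\rm btm}\kappa_{\rm qkh}(\epsilon/2)^{3/2}\sqrt{\beta_k},
\end{eqnarray*}
while the merit-function decrease on a successful step is at least $\eta_1\Delta q_k$. Summing $\phi(x_k,\mu_{\rm max})-\phi(x_{k+1},\mu_{\rm max})\ge\eta_1\Delta q_k$ over successful $k$ in the block and invoking that $\phi(\cdot,\mu_{\rm max})$ is bounded below by (A1) forces $\sum_{k\in\mathcal S}\sqrt{\beta_k}<\infty$ over the tail; in particular $\sum_{t_i\le k<l_i}\sqrt{\beta_k}\to 0$ as $i\to\infty$. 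Since each step satisfies $\|x_{k+1}-x_k\|\le\|d_k\|\le M_{\rm d}\sqrt{\beta_k}$ by Lemma \ref{lem3.4} (whose hypotheses \eqref{37}--\eqref{38} hold on the block because $\|g_k^Z\|\ge\epsilon/2$ and $\beta_k\to 0$ along it), the telescoped bound
\begin{eqnarray*}
\|x_{l_i}-x_{t_i}\|\le\sum_{k=t_i}^{l_i-1}\|x_{k+1}-x_k\|\le M_{\rm d}\sum_{k=t_i}^{l_i-1}\sqrt{\beta_k}\to 0
\end{eqnarray*}
shows the endpoints of each block coalesce.

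Finally I would derive the contradiction from (A5). Uniform continuity of $Z_k^Tg_k$ along the iterates gives $\|Z_{t_i}^Tg_{t_i}-Z_{l_i}^Tg_{l_i}\|\to 0$ as $i\to\infty$, since $\|x_{t_i}-x_{l_i}\|\to 0$. But by construction $\|Z_{t_i}^Tg_{t_i}\|\ge 2\epsilon$ while $\|Z_{l_i}^Tg_{l_i}\|<\epsilon$, so the difference is bounded below by $\epsilon$ for every $i$, contradicting convergence to zero. Hence no such $\epsilon$ exists and the full limit holds.

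The main obstacle I anticipate is the bookkeeping needed to legitimately apply Lemma \ref{lem3.4} on each block: its hypothesis \eqref{38} is stated as $\sqrt{\beta_k\|g_k^Z\|/t}\to 0$, so I must confirm that $\beta_k\to 0$ along the relevant indices. This is exactly what the summability $\sum\sqrt{\beta_k}<\infty$ on successful iterations delivers for the successful steps, but I would need to be careful that unsuccessful iterations inside a block do not move the iterate (so they contribute nothing to displacement) and that the block really consists of steps where the displacement bound applies; reconciling the summability statement (which concerns successful iterations) with the per-step displacement bound over a contiguous block of indices is the delicate point, and mirrors the careful index management already carried out in the proof of Theorem \ref{thm3.1}.
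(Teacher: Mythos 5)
Your argument is correct and follows essentially the same route as the paper's proof of Theorem \ref{thm3.4}: the same spike-to-drop blocks $t_i<l_i$ built on Theorem \ref{thm3.3}, the same passage from $\|Z_k^Tg_k\|$ to $\|g_k^Z\|$ via Theorem \ref{thm3.1} and (A3), the same decrease bound $\Delta q_k\geq\kappa_{\rm btm}\kappa_{\rm qkh}\sqrt{\beta_k\|g_k^Z\|^3}$ from Lemmas \ref{lem3.8} and \ref{lem3.2}, the same displacement bound via Lemma \ref{lem3.4}, and the identical contradiction with (A5) — the paper merely phrases the telescoped merit decrease through the auxiliary function $\Phi$ (equivalent to your $\phi(\cdot,\mu_{\rm max})$ once $\mu_k$ stabilizes by Lemma \ref{thm3.2}) and uses thresholds $3\epsilon/2\epsilon$ where you use $2\epsilon/\epsilon$. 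One small correction to your wording: the summability conclusion should be $\sum_{k\in\mathcal{K}}\sqrt{\beta_k}<\infty$ with $\mathcal{K}$ the set of \emph{successful iterations inside the blocks} (the only indices where the $\sqrt{\beta_k}$ lower bound on $\Delta q_k$ is available), not all successful $k$ in the tail — which is harmless here since your subsequent estimates use only the block sums.
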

\begin{proof}
Suppose that there is an infinite subsequence $\{t_i\}\subset\mathcal{S}$ such that
\begin{eqnarray}\label{3epsilon}
 \|Z^T_{t_i}g_{t_i}\|\geq3\epsilon
\end{eqnarray}
for some $\epsilon>0$ and for all $i$. Theorem \ref{thm3.3} ensures that for each $t_i$, there is a first successful iteration $l_i>t_i$ such that $\|Z^T_{l_i}g_{l_i}\|<2\epsilon$. Thus $\{l_i\}\subset\mathcal{S}$, and for all $i$ and $t_i\leq k<l_i$,
\begin{eqnarray}\label{epsilon}
  \|Z^T_kg_k\|\geq2\epsilon,~{\rm and}~\|Z^T_{l_i}g_{l_i}\|<2\epsilon.
\end{eqnarray}
Similar to \eqref{lbg2}, we have that
\begin{equation}\label{gnepsilon}
\|g^Z_k\|\geq\epsilon>0.
\end{equation}
Let $\mathcal{K}:=\{k\in\mathcal{S}:t_i\leq k<l_i\}$, where $\{t_i\}$ and $\{l_i\}$ are defined above. Since $\mathcal{K}\in\mathcal{S}$, we can get from \eqref{lema2}, Assumption (A3), \eqref{kbtm} and \eqref{gnepsilon}  that
\begin{eqnarray}\label{tmd}
 \Delta q_k\geq \kappa_{\rm btm}\kappa_{\rm qkh}\epsilon\sqrt{\beta_k\epsilon},~k\in\mathcal{K}.
\end{eqnarray}
From \eqref{varphi2}, \eqref{gnepsilon} and \eqref{tmd},  we have for all $~k\in\mathcal{K}$ that
\begin{eqnarray*}
 \Phi(x_k,\mu_k)-\Phi(x_{k+1},\mu_{k+1})\geq\frac{\eta_1\kappa_{\rm btm}\kappa_{\rm qkh}\epsilon^{\frac{3}{2}}}{\mu_k}\sqrt{\beta_k}.
\end{eqnarray*}
It then follows from \eqref{mumax} that
\begin{eqnarray}\label{phixk}
 \Phi(x_k,\mu_k)-\Phi(x_{k+1},\mu_{k+1})\geq \frac{\eta_1\kappa_{\rm btm}\kappa_{\rm qkh}\epsilon^{\frac{3}{2}}}{\mu_{\max}}\sqrt{\beta_k}
\end{eqnarray}
for all sufficiently large $k\in\mathcal{K}$. From \eqref{epsilon} and \eqref{phixk}, we have that \eqref{d39} is satisfied when $\mathcal{I}=\mathcal{K}$, and thus
\begin{eqnarray*}
 \Phi(x_k,\mu_k)-\Phi(x_{k+1},\mu_{k+1})\geq \frac{\eta_1\kappa_{\rm btm}\kappa_{\rm qkh}\epsilon^{\frac{3}{2}}}{\mu_{\max}M_{\rm d}}  \|d_k\|
\end{eqnarray*}
for all $t_i\leq k<l_i$, $k\in\mathcal{S}$, $i$ sufficiently large. Summing the above between iteration $t_i$ and $l_i$, and employing the triangle inequality, we deduce that,
\begin{eqnarray}\label{xli}
 &&\frac{\mu_{\max}M_{\rm d}}{\eta_1\kappa_{\rm btm}\kappa_{\rm qkh}\epsilon^{\frac{3}{2}}} (\Phi(x_{t_i},\mu_{t_i})-\Phi(x_{l_i},\mu_{l_i}))\nonumber\\
 &\geq&\sum^{l_i-1}_{k=t_i,k\in\mathcal{S}}\|d_k\|  = \sum^{l_i-1}_{k=t_i,k\in\mathcal{S}}\|x_{k+1}-x_k\| =\|x_{l_i}-x_{t_i}\|
\end{eqnarray}
for all $i$ sufficiently large. Since $\{\Phi(x_k,\mu_k)\}$ is convergent, we have
\begin{eqnarray*}
 \lim_{i\rightarrow\infty}\|\Phi(x_{t_i},\mu_{t_i})-\Phi(x_{l_i},\mu_{l_i})\|=0,
\end{eqnarray*}
and then \eqref{xli} implies that
\begin{eqnarray*}
 \lim_{i\rightarrow\infty}\|x_{l_i}-x_{t_i}\|=0.
\end{eqnarray*}
Assumption (A5) implies that
\begin{eqnarray}\label{Nti}
 \lim_{i\rightarrow\infty}\|Z^T_{l_i}g_{l_i}-Z^T_{t_i}g_{t_i}\|=0.
\end{eqnarray}
But from \eqref{3epsilon} and \eqref{epsilon}, we have
\begin{eqnarray*}
 \|Z^T_{l_i}g_{l_i}-Z^T_{t_i}g_{t_i}\|\geq\|Z^T_{t_i}g_{t_i}\|-\|Z^T_{l_i}g_{l_i}\|\geq\epsilon,
\end{eqnarray*}
which is a contradiction to \eqref{Nti}. Hence, the theorem is proved.
\end{proof}
\section{Numerical Results}\label{sec4}
 In this section, we present the numerical results of $\mathrm{SSARC_{q}K}$ (Algorithm \ref{Alg2}) which have been performed on a desktop with Intel(R) Core(TM) i5-8265U CPU @ 1.60GHz. $\mathrm{SSARC_{q}K}$ is implemented under MATLAB(R2022a).

In the implementation of $\mathrm{SSARC_{q}K}$, $B_k$ is the Hessian of the Lagrangian function and the vector of Lagrange multipliers is computed
 by solving
 $\underset{s\in\mathbb{R}^p}{\min} ~\|g_k-J_k^Ts\|.$

Other parameters are set as follows.
$$\gamma_1=0.1,\gamma_2=5,\eta_1 = 0.01, \eta_2 = 0.75,\tau_1=2, \tau_2=1,\nu=10^{-4}, \xi=0.1, \zeta=0.01.$$
The stop tolerance for $\mathrm{SSARC_{q}K}$  is
 \begin{eqnarray*}
 \mathrm{Res}=\mathrm{max}\{\|Z^T_kg_k\|,\|c(x_k)\|\}\leq \epsilon.
\end{eqnarray*}

In this section, $N_{it}$ denotes the number of iterations, $N_f$, $N_c$, and $N_g$ are the number of computation of the objective function,  the number of computation of the constraint function, and the number of computation of the gradient of the objective function, respectively. $n$ denotes the number of variables. $p$ denotes the number of equality constraints.

We first test $\mathrm{SSARC_{q}K}$ on a set of equality constrained optimization problems from CUTEst collection \cite{Gould2015cutest}. The setting of termination criteria is $\mathrm{Res}\leq \epsilon=10^{-8}$. The numerical results are reported in Table \ref{table5.1}, where the CPU-time is counted in seconds. 

\textcolor{blue}{Although the CUTEst test problems in Table \ref{table5.1} are not so large in scale, they represent most of numerical difficulties in practice, such as badly scaled objective and constraint functions, badly scaled variables, ill-conditioned optimization problems, non-regular solutions at points where the constraint qualification is not satisfied, different local solutions, and infinitely many solutions. Therefore, we use these problems to test the robustness of the algorithm. The numerical results show that $\mathrm{SSARC_qK}$ is robust and efficient when solving these small- and medium-sized problems.
Then we compare the performance of $\mathrm{SSARC_qK}$ and algorithms in \cite{2002JOTA,Liu2011SIOPT,Chen2020COAP}, respectively.}
\subsection*{\textcolor{blue}{(I) Comparison with Algorithm A-SQP in \cite{2002JOTA}}}
\textcolor{blue}{Algorithm A-SQP in \cite{2002JOTA} is an adaptive algorithm for constrained least-squares problems. A-SQP is also compared with the method in \cite{Bergou2021SIAMJSC}, which is a Levenberg--Marquardt algorithm for nonlinear least squares subject to nonlinear equality constraints.
We compare the performance of A-SQP and $\mathrm{SSARC_qK}$ on two sets of test problems; one is a parameter estimation problem (Example 4.1 in \cite{2002JOTA}); the other is from Section 4 in \cite{2002JOTA} (Examples 4.2 in \cite{2002JOTA}). The stop tolerance $\epsilon=10^{-5}$ is the same as in \cite{2002JOTA} in the comparison.}

{\textcolor{blue}{\bf (i) Example 4.1 in \cite{2002JOTA}}}.\\
\textcolor{blue}{\indent We start the comparison from Example 4.1 in \cite{2002JOTA} as follows.}  
\begin{subequations}
\begin{align*}
\textcolor{blue}{{\underset {x\in\mathbb{R}^3}{{\rm minimize}}}}  &\ \ \  \textcolor{blue}{\frac{1}{2} \sum_{i=1}^{\hat{n}} r_{i}^{2}(x)} \ \\
\mbox{\textcolor{blue}{subject to}} &\ \ \  \textcolor{blue}{\alpha + \beta + \gamma^2 = 0,}
\end{align*}
\end{subequations}
\textcolor{blue}{where the parameter $x = (\alpha, \beta, \gamma)^T$, the residual}
$$ \textcolor{blue}{r_{i}(x) := y_{i} - \left( e^{-\alpha t_{i}} - e^{-\beta t_{i}} - \gamma e^{-t_{i}} - e^{-10 t_{i}} \right),}$$
\textcolor{blue}{and $ y_1, y_2, \ldots, y_{\hat{n}} $ are observations of some parameter estimation problem, that is,}
$$
\textcolor{blue}{y_i = e^{-\alpha t_i} - e^{-\beta t_i} - \gamma e^{-t_i} - e^{-10 t_i} + \epsilon_i,}
$$
\textcolor{blue}{where $ t_i = i / {\hat{n}} $ and $ \{\epsilon_i\}_{i=1}^{\hat{n}} $ are independent Gaussian random variables with mean $ 0 $ and variance $ \sigma^2 $.} 

\textcolor{blue}{In the comparison, the experimental setup is the same as that in \cite{2002JOTA}, that is, 
the simulation data $ y_i $ is generated using the true parameter values:}
$$\textcolor{blue}{\alpha = 1.0, \quad \beta = -2.0, \quad \gamma = 1.0.}$$
\textcolor{blue}{Three cases for the noise standard deviation $ \sigma $ are tested:}
$$\textcolor{blue}{\sigma = 1.0, \quad \sigma = 0.1, \quad \sigma = 0.01.}$$

\textcolor{blue}{For all cases, the initial point is set as: $x_0 = [0.8, -2.3, 0.9]^T.$}

\textcolor{blue}{The number of observations $ \hat{n} $ is varied as: 
$\hat{n} = 10, \quad 10^2, \quad 10^3, \quad 10^4.$}

\textcolor{blue}{Since $ \{\epsilon_i\}_{i=1}^n $ are independent Gaussian random variables, we run $\mathrm{SSARC_{q}K}$ 50 times and select the best results to compare with A-SQP. The numerical comparison results are reported in Table \ref{2002JOTAcomparison} in terms of the number of function evaluations $ N_f $ and gradient evaluations $ N_g $.}

\textcolor{blue}{From the numerical experiment results, we can observe that A-SQP outperforms $\mathrm{SSARC_qK}$. This is because A-SQP exploits and utilizes the  structure properties of least-squares problems, whereas $\mathrm{SSARC_qK}$ does not use these properties.}

{\textcolor{blue}{{\bf (ii) Examples 4.2 in \cite{2002JOTA}}}.\\
\textcolor{blue}{\indent We compare $\mathrm{SSARC_{q}K}$ with A-SQP on some standard test examples. A detailed description of these examples can be found in \cite{Gould2015cutest} and references therein. All the problems are numbered as in these references. For those problems having inequality constraints, only the constraints active at the solution are included. Linearly constrained problems have been excluded. The experimental setup is also the same as that in \cite{2002JOTA}. We include the corresponding numerical results of A-SQP. The comparison numerical results are reported in Table \ref{comparison2002ex42}.}

\textcolor{blue}{To display the performance based on the numerical results in Table \ref{comparison2002ex42} visually, we use the logarithmic performance profiles \cite{Dolan} (see Figure \ref{ngnf2002jota}). From Table \ref{comparison2002ex42} and Figure \ref{ngnf2002jota}, it can be seen that $\mathrm{SSARC_{q}K}$ performs better than Algorithm A-SQP in \cite{2002JOTA} for the given problems.}
\begin{figure}[htbp]
	\centering
	\includegraphics[width=5.55cm,height=4.8cm]{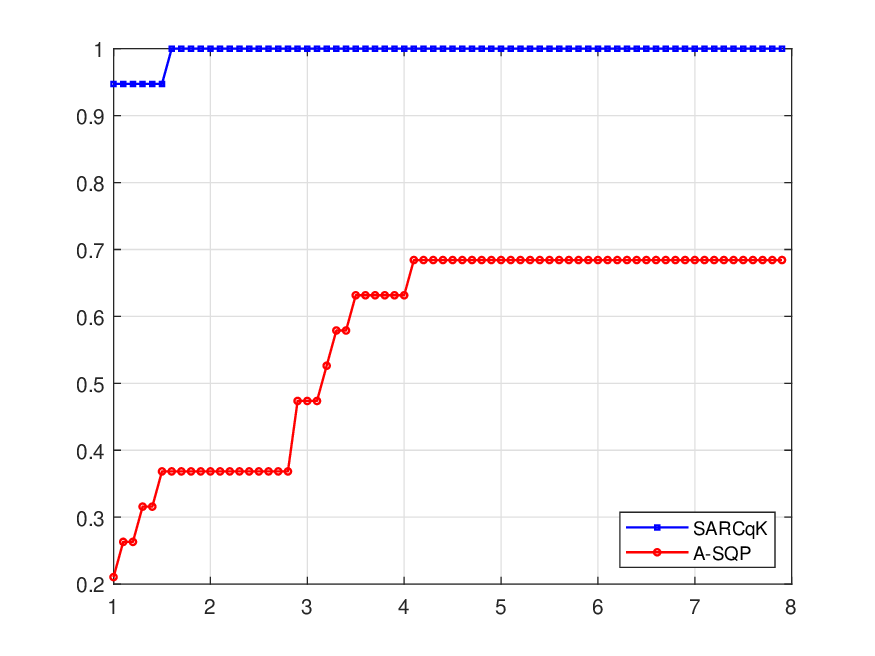}
	\quad
	\includegraphics[width=5.55cm,height=4.8cm]{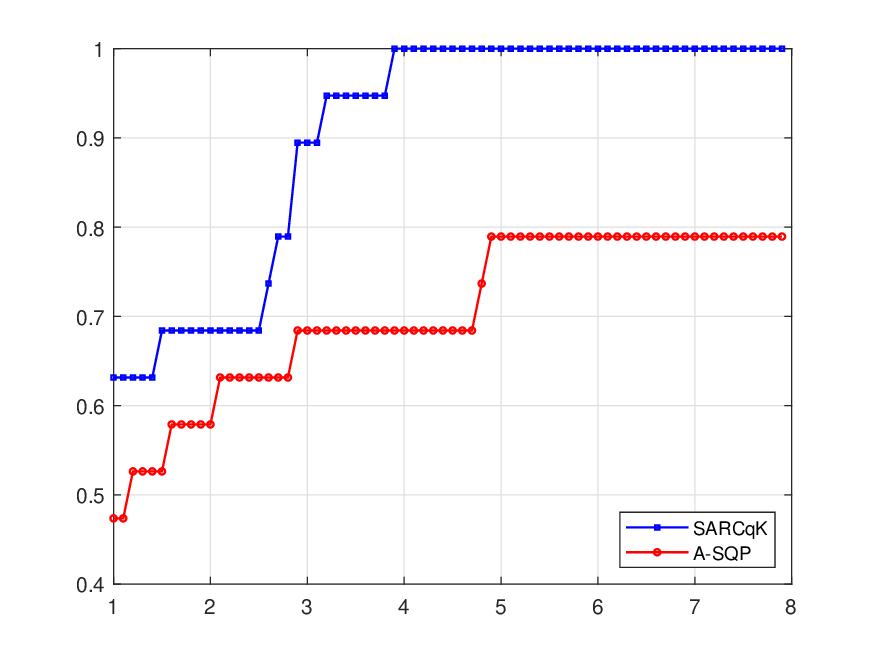} \\[0pt]
	\caption{\textcolor{blue}{Performance profiles based on numbers of objective function evaluation $N_f$ (left) and numbers of objective function's gradient   
 evaluation $N_g$ (right).}}\label{ngnf2002jota}
\end{figure}
\subsection*{\textcolor{blue}{(II) Comparison with Algorithm 2.2 in \cite{Liu2011SIOPT}}}
\textcolor{blue}{Algorithm 2.2 in \cite{Liu2011SIOPT} is a sequential quadratic programming method without using a penalty function or
a filter for solving nonlinear equality constrained optimization. It is robust and efficient when solving some small- and medium-sized problems.
The comparison results are listed in Table \ref{Liu2011comparison}, where $N_j$ denotes the number of Jacobian matrix $J(x)$ evaluations. 
Figure \ref{nfngliu} displays the performance based on the numerical results in Table \ref{Liu2011comparison}. The stop tolerance $\epsilon=10^{-5}$ is the same as in \cite{Liu2011SIOPT} in the comparison.}
\begin{figure}[htbp]
	\centering
	\includegraphics[width=5.55cm,height=4.8cm]{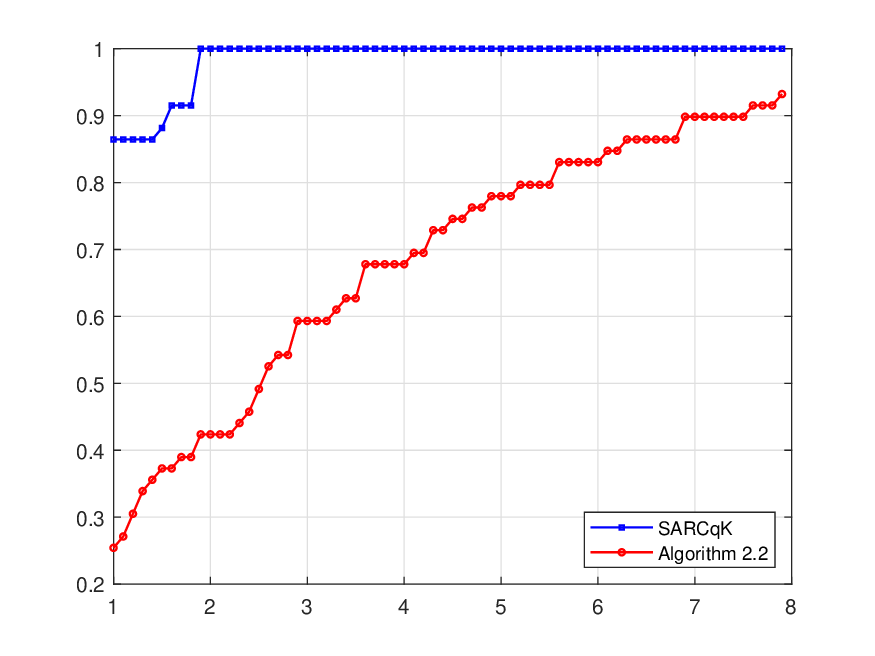}
	\quad
	\includegraphics[width=5.55cm,height=4.8cm]{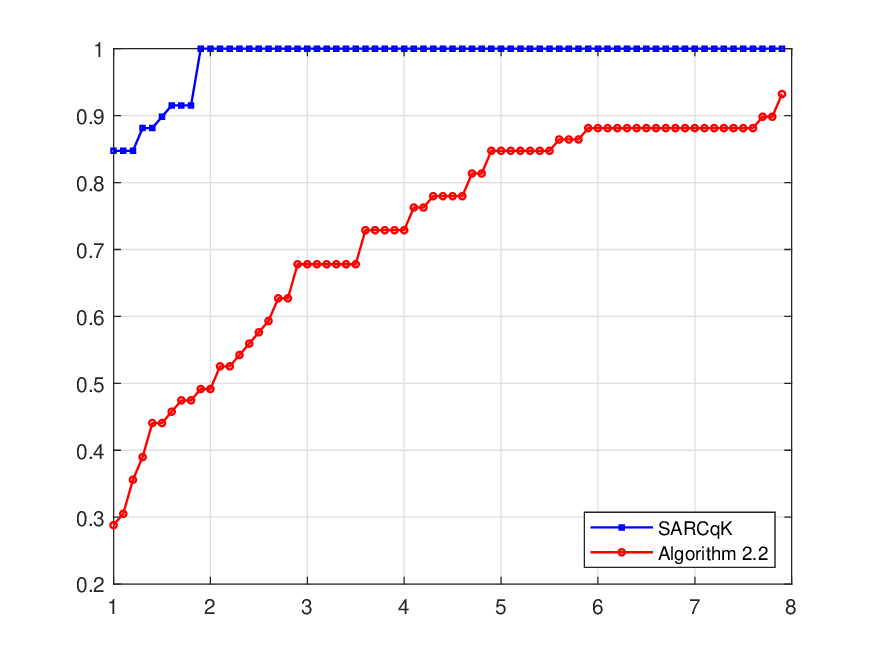} \\[0pt]
	\caption{\textcolor{blue}{Performance profiles on $N_f=N_c$ (left) and $N_g=N_j$ (right).}}\label{nfngliu}
\end{figure}

\textcolor{blue}{From these comparison results, it can be seen that $\mathrm{SSARC_qK}$ is competitive with Algorithm 2.2 \cite{Liu2011SIOPT} for the given problems.}

\subsection*{\textcolor{blue}{(III) Comparison with Algorithm 2.1 in \cite{Chen2020COAP}}}
\textcolor{blue}{We first test $\mathrm{SSARC_qK}$ for the following two-dimensional problem given by Powell \cite{PowellSIAMReview1986}.}
\begin{subequations}\label{maratosexm}
\begin{align}
\textcolor{blue}{{\underset {x\in\mathbb{R}^2}{{\rm minimize}}}}   &\ \ \  \textcolor{blue}{f(x)=-x_1 +\rho (x_1^2+x_2^2-1)}\ \\
\mbox{\textcolor{blue}{subject to}} &\ \ \  \textcolor{blue}{c(x)=x_1^2+x_2^2-1= 0.}
\end{align}
\end{subequations}
\textcolor{blue}{This problem is used to illustrate the Maratos effect in \cite{Chen2020COAP} with $\rho=2$. The Maratos effect is very serious for $\rho\geq 10$. We test $\mathrm{SSARC_qK}$ for $\rho=2, 10, 100, 1000$, while we compare $\mathrm{SSARC_qK}$ with Algorithm 2.1 in \cite{Chen2020COAP} and its variant which uses objective function in the acceptance criterion in \cite{Chen2020COAP}. The initial points and the setting of termination criteria (${\rm Res}\leq\epsilon=10^{-10}$) are the same as those in \cite{Chen2020COAP}. The comparison results are reported in Table \ref{tabchenobj} and Table \ref{tabchenlag}.}

\textcolor{blue}{In Table \ref{tabchenobj}, F denotes fail when $N_i>1000$. $\mathrm{SSARC_qK}$ outperforms Algorithm 2.1 in \cite{Chen2020COAP} for $\rho=2$. For $\rho=10$, the two algorithms perform basically the same. Algorithm 2.1 in \cite{Chen2020COAP} is significantly superior to $\mathrm{SSARC_qK}$ for $\rho=100$. Both $\mathrm{SSARC_qK}$ and Algorithm 2.1 in \cite{Chen2020COAP} fail for $\rho=1000$ when using the objective function in the acceptance criterion (thus not listed in the table). This may be because both algorithms suffer from the severe Maratos effect.}

\textcolor{blue}{Inspired by Algorithm 2.1 in \cite{Chen2020COAP}, we tried to replace the original objective function with a Lagrangian function in $\mathrm{SSARC_qK}$. The numerical results are significantly improved and almost the same as the performance of Algorithm 2.1 (see Table \ref{tabchenlag}). This indicates that replacing the original objective function with a Lagrangian function may alleviate the Marotos effect of the algorithm, but this requires theoretical analysis and proof. Such analysis and proof are not necessarily straightforward to obtain, which will also be a focus of our future research.} 

\textcolor{blue}{Then we include and compare the corresponding numerical results of Algorithm 2.1 in \cite{Chen2020COAP} for a set of standard test problems from \cite{Gould2015cutest}. The comparison numerical results are reported in Table \ref{tab2020chen}, where $I$ means that Algorithm 2.1 \cite{Chen2020COAP} gets an infeasible stationary point in the case $n=m$.}
\textcolor{blue}{For comparison, the dimensions of the test problems are set to be the same as those in \cite{Chen2020COAP}.}

\textcolor{blue}{Furthermore, to show the comparison of the performance based on the numerical results in Table \ref{tab2020chen} visually, we also use the logarithmic performance profiles \cite{Dolan} (see Figure \ref{nfnc2020chen}). From Table \ref{tab2020chen} and Figure \ref{nfnc2020chen}, it can be seen that $\mathrm{SSARC_qK}$ performs better than Algorithm 2.1 in \cite{Chen2020COAP} for these standard test problems.}
\begin{figure}[htbp]
	\centering
	\includegraphics[width=5.55cm,height=4.8cm]{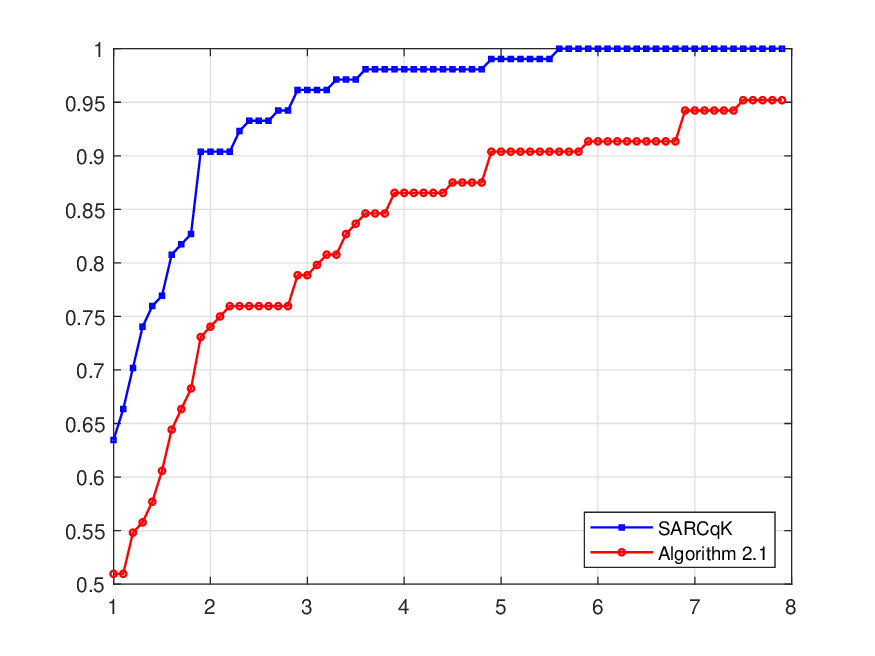}
	\quad
	\includegraphics[width=5.55cm,height=4.8cm]{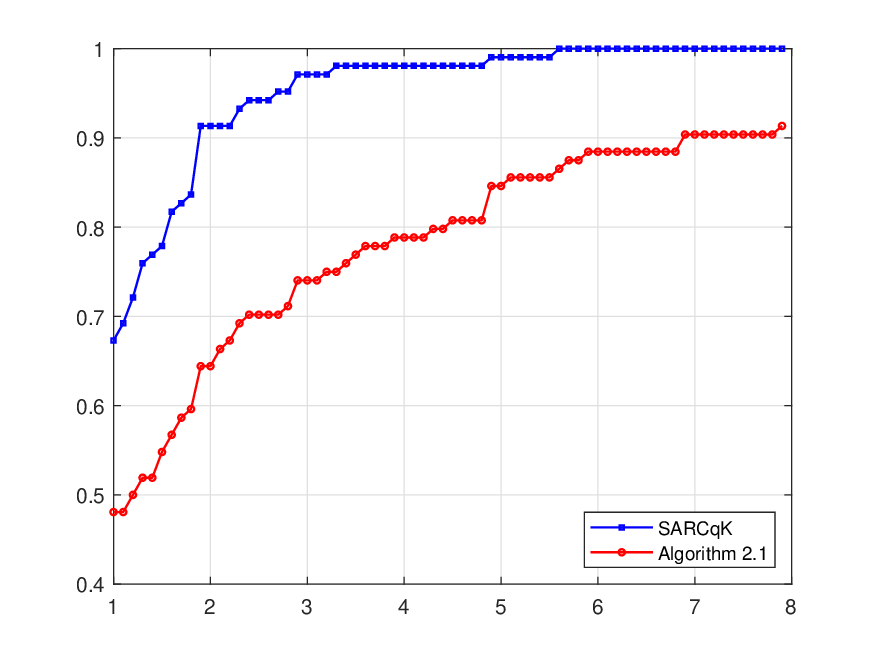} \\[0pt]
	\caption{\textcolor{blue}{Performance profiles on Performance profiles on $N_f$ (left) and $N_c$ (right).}}\label{nfnc2020chen}
\end{figure}

\section{Conclusions}
\label{sec6}
 We have introduced a scalable sequential adaptive cubic regularization algorithm ($\mathrm{SSARC_qK}$) for nonlinear equality constrained optimization. \textcolor{blue}{Composite-step methods are employed to decompose the trial step into the sum of the vertical step and the horizontal step.} The horizontal step is obtained by solving an \textcolor{blue}{ARC subproblem's necessary and sufficient optimality conditions system inexactly. Specifically,} we employ CG-Lanczos procedure with shifts to this end. \textcolor{blue}{This procedure is analogous to Algorithm 2 in \cite{Dussault19}.} Then the exact penalty function is used as the merit function to decide whether the trial step is accepted. Under some suitable assumptions, we prove the global \textcolor{blue}{convergence to first-order critical points.} \textcolor{blue}{Numerical tests and some comparison results are given to demonstrate the performance of $\mathrm{SSARC_qK}$ for small- and medium-sized problems.  
 There are two key distinctions between $\mathrm{SSARC_qK}$ and the two algorithms in \cite{Pei32,Pei33}. First, the approaches to solving ARC subproblems differ: the two previous algorithms solve ARC subproblems in line with optimality conditions \eqref{TKM}, whereas $\mathrm{SSARC_qK}$ adopts an inexact optimality conditions \eqref{TKm} for this purpose. Second, the convergence analysis varies: the two previous algorithms rely on optimality conditions to establish convergence, while $\mathrm{SSARC_qK}$ can only leverage inexact optimality conditions for its convergence proof.}
 
 \textcolor{blue}{In future research, we shall implement and test $\mathrm{SSARC_qK}$ carefully in the large-scale case. For instance, we can consider matrix-free methods similar to those proposed in \cite{Byrd2010MP,Heinkenschloss2014SIOPT,Bergou2021SIAMJSC}, which exhibit high computational efficiency when solving large-scale problems.     
 In addition, $\mathrm{SSARC_qK}$ may suffer from the Maratos effect as discussed in the comparison with Algorithm 2.1 in \cite{Chen2020COAP}. We need to study the corresponding techniques to overcome the Maratos effect. And the worst-case function- and derivative-evaluation complexity of the algorithm and the local convergence are also the issues that we plan to address in our future research.}

\begin{acknowledgement}
The authors are very grateful to the anonymous referees and editors for their helpful comments on this paper.
The authors gratefully acknowledge the supports of the National Natural Science Foundation of China (12071133), Natural Science Foundation of Henan Province (252300421993) and Key Scientific Research Project for Colleges and Universities in Henan Province (25B110005). 
\end{acknowledgement}

\begin{dataa}
 There is no data generated or analysed in this paper.
\end{dataa}

\section*{Declarations}
\small  The authors declare that they have no conflict of interest.

\section*{Appendix: Tables in Numerical Results}

\begin{center}
{
\begin{longtable}{cccccccc}
\caption{Numerical results of $\mathrm{SSARC_{q}K}$ for CUTEst problems}\label{table5.1}
  \endfirsthead   
  \multicolumn{8}{c}{Table \ref{table5.1} continued}\\
  \hline
\multirow{2}{*}{Problem}  & \multicolumn{2}{c}{Dimension} &\multirow{2}{*}{$N_{it}$}  &\multirow{2}{*}{CPU-time} & \multirow{2}{*}{Res}  &\multirow{2}{*}{$N_f$}  &\multirow{2}{*}{$N_g$ }\\
                          \cmidrule{2-3}

                          &$n$&$p$                                                         \\
  \hline
  \endhead
  \hline
\multirow{2}{*}{Problem}  & \multicolumn{2}{c}{Dimension} &\multirow{2}{*}{$N_{it}$}  &\multirow{2}{*}{CPU-time} & \multirow{2}{*}{Res}  &\multirow{2}{*}{$N_f$}  &\multirow{2}{*}{$N_g$ }\\
                          \cmidrule{2-3}

                          &$n$&$p$                       \\
\hline
AIRCRFTA    &8    &5        &3  &0.002129       &2.2215e-16   &4    &3       \\
 ARGTRIG    &200  &200      &3  &0.096873       &8.1517e-10   &4    &3        \\
 ARTIF      &5002 &5000     &6  &70.187258      &1.4900e-09   &7    &6        \\
 AUG2D      &703  &300      &5  &0.344238       &2.4374e-11   &6    &5       \\
 AUG2DC     &703  &300      &11 &0.919471       &1.2970e-09   &12   &11       \\
 BDVALUE    &5002 &5000     &2  &15.120100      &7.3096e-09   &3    &2        \\
 BDVALUES   &10002&10000    &12 &95.128044      &2.2229e-10   &13   &12       \\
 BOOTH      &2    &2        &2  &0.002069       &4.4409e-16   &3    &2        \\
 BRATU2D    &5184 &4900     &2  &41.399009      &7.4054e-09   &3    &2        \\
 BRATU2DT   &5184 &4900     &3  &49.120426      &3.6233e-14   &4    &3        \\
 BRATU3D    &4913 &3375     &3  &30.817134      &1.6127e-13   &4    &3        \\
 BROWNALE   &200  &200      &8  &0.171668       &2.0178e-10   &9    &8       \\
 BROYDN3D   &5000 &5000     &4  &50.936617      &1.0741e-09   &5    &4        \\
  BT1       &2    &1        &5  &0.002108       &7.1663e-09   &6    &5        \\
  BT2       &3    &1        &11 &0.009406       &9.0428e-14   &12   &11       \\
  BT3       &5    &3        &4  &0.004625       &8.9598e-10   &5    &4        \\
  BT4       &3    &2        &6  &0.004607       &7.2720e-14   &7    &6        \\
  BT5       &3    &2        &4  &0.002099       &8.9746e-09   &5    &4        \\
  BT6       &5    &2        &10 &0.005238       &5.4852e-09   &11   &10       \\
  BT7       &5    &3        &12 &0.005609       &3.0271e-14   &13   &12       \\
  BT8       &5    &2        &14 &0.004503       &5.2684e-09   &15   &14       \\
  BT9       &4    &2        &8  &0.002767       &5.2087e-13   &9    &8       \\
  BT10      &2    &2        &6  &0.003610       &4.4039e-09   &7    &6        \\
  BT11      &5    &3        &7  &0.003306       &1.4334e-11   &8    &7        \\
  BT12      &5    &3        &4  &0.002481       &2.3846e-13   &5    &4        \\
  BYRDSPHR  &3    &2        &8  &0.002721       &1.6387e-13   &9    &8       \\
  CBRATU2D  &3200 &2888     &3  &11.201362      &1.5569e-15   &4    &3        \\
  CBRATU3D  &3456 &2000     &3  &8.366288       &3.8684e-13   &4    &3        \\
  CHANDHEU  &500  &500      &15 &1.384879       &9.5858e-09   &16   &15       \\
  CLUSTER   &2    &2        &9  &0.007128       &9.7423e-12   &10   &9        \\
  CUBENE    &2    &2        &5  &0.004295       &0.0000e+0    &6    &5        \\
  DECONVNE  &63   &40       &2  &0.009853       &1.2385e-15   &3    &2        \\
  DTOC1L    &5998 &3996     &9  &353.362691     &1.6970e-09   &10   &9       \\
  DTOC2     &5998 &3996     &7  &230.113825     &1.5442e-10   &8    &7        \\
  DTOC3     &4499 &2998     &5  &63.250006      &2.8069e-10   &6    &5       \\
  DTOC4     &4499 &2998     &2  &25.196262      &4.9860e-10   &3    &2        \\
  DTOC5     &9999 &4999     &2  &122.165166     &3.9108e-09   &3    &2        \\
  EIGENA2   &2550 &1275     &4  &12.710827      &1.1494e-10   &5    &4       \\
  EIGENAU   &2550 &2550     &6  &9.275353       &1.4071e-12   &7    &6        \\
  EIGENB2   &6    &3        &12 &0.004896       &1.6610e-10   &15   &14       \\
  EIGENC2   &2652 &1326     &10 &101.078448     &6.0021e-09   &11   &10      \\
  FLOSP2TL  &2883 &2763     &5  &12.585847      &5.3968e-11   &6    &5       \\
  GENHS28   &10   &8        &3  &0.001739       &6.4625e-11   &4    &3        \\
  GOTTFR    &2    &2        &7  &0.005328       &1.1392e-09   &8    &7        \\
  GRIDNETB  &7564 &3844     &8  &267.990458     &1.0438e-09   &9    &8        \\
  GRIDNETE  &7564 &3844     &8  &284.988885     &5.8997e-10   &9    &8        \\
  HAGER1    &10001 &5000    &2  &120.068329     &9.2810e-09   &3    &2        \\
  HAGER2    &10001 &5000    &3  &175.863774     &1.5798e-09   &4    &3        \\
  HAGER3    &10001 &5000    &3  &177.415412     &1.4325e-09   &4    &3        \\
  HATFLDF   &3    &3        &6  &0.004371       &2.8311e-12   &7    &6        \\
  HATFLDG   &25   &25       &8  &0.007073       &4.7181e-15   &9    &8        \\
  HEART6    &6    &6        &28 &0.012608       &6.2690e-09   &29   &28       \\
  HEART8    &8    &8        &11 &0.007463       &2.1299e-14   &12   &11       \\
  HIMMELBA  &2    &2        &1  &0.000692       &0.0000e+0    &2    &1        \\
  HIMMELBC  &2    &2        &5  &0.002564       &0.0000e+0    &6    &5        \\
  HIMMELBE  &3    &3        &2  &0.001286       &0.0000e+0    &3    &2        \\
  HS6       &2    &1        &7  &0.002632       &5.9686e-13   &8    &7        \\
  HS7       &2    &1        &7  &0.003295       &1.5913e-10   &8    &7        \\
  HS8       &2    &2        &4  &0.002254       &2.2061e-12   &5    &4        \\
  HS9       &2    &1        &6  &0.002361       &1.7298e-09   &7    &6        \\
  HS26      &3    &1        &16 &0.006608       &6.9605e-09   &17   &16      \\
  HS27      &3    &1        &10 &0.003562       &7.4560e-09   &11   &10      \\
  HS28      &3    &1        &3  &0.001554       &7.0407e-12   &4    &3       \\
  HS39      &4    &2        &8  &0.003158       &5.6164e-11   &9    &8      \\
  HS40      &4    &3        &3  &0.001572       &3.4209e-10   &4    &3       \\
  HS42      &4    &2        &3  &0.001530       &9.3173e-09   &4    &3       \\
  HS46      &5    &2        &22 &0.011721       &7.6893e-09   &23   &22      \\
  HS47      &5    &3        &19 &0.012813       &5.4288e-09   &20   &19      \\
  HS48      &5    &2        &3  &0.001755       &8.6179e-13   &4    &3       \\
  HS49      &5    &2        &21 &0.009783       &8.3110e-09   &22   &21      \\
  HS50      &5    &3        &9  &0.005791       &3.5567e-13   &10   &9       \\
  HS51      &5    &3        &2  &0.001446       &4.3252e-09   &3    &2        \\
  HS52      &5    &3        &2  &0.001424       &3.4710e-09   &3    &2        \\
  HS56      &7    &4        &12 &0.006713       &3.5278e-09   &13   &12       \\
  HS61      &3    &2        &5  &0.002398       &1.3609e-11   &7    &6        \\
  \textcolor{blue}{HS65}      &3    &1        &12 &0.004097       &3.1640e-12   &14   &13      \\
  HS77      &5    &2        &9  &0.004594       &8.9764e-10   &11   &10       \\
  HS78      &5    &3        &10 &0.004134       &2.8086e-09   &11   &10       \\
  HS79      &5    &3        &5  &0.002961       &1.1567e-11   &6    &5        \\
  HS100LNP  &7    &2        &6  &0.003542       &2.4747e-12   &9    &8        \\
  HS111LNP  &10   &3        &12 &0.007043       &1.3675e-10   &13   &12       \\
  HYDCAR20  &99   &99       &9  &0.109790       &1.3560e-11   &10   &9        \\
  HYDCAR6   &29   &29       &5  &0.008142       &5.9618e-14   &6    &5          \\
  HYPCIR    &2    &2        &4  &0.002165       &1.9989e-10   &5    &4          \\
  INTEGREQ  &502  &500      &3  &0.482701       &3.9141e-14   &4    &3          \\
  JUNKTURN  &1010 &700      &10 &11.612983      &1.3013e-09   &11   &10           \\
  MARATOS   &2    &1        &3  &0.001913       &1.7235e-09   &4    &3           \\
  METHANB8  &31   &31       &3  &0.003778       &1.9401e-13   &4    &3             \\
  METHANL8  &31   &31       &4  &0.004632       &5.2325e-11   &5    &4           \\
  MSQRTA    &1024 &1024     &6  &1.199536       &2.5112e-14   &7    &6           \\
  MSQRTB    &1024 &1024     &6  &1.255476       &2.5602e-14   &7    &6           \\
  MWRIGHT   &5    &3        &5  &0.003397       &3.6934e-11   &6    &5            \\
  OPTCTRL3  &302  &200      &27 &3.236480       &6.4662e-09   &28   &27            \\
  OPTCTRL6  &302  &200      &22 &2.872068       &3.1631e-09   &23   &22            \\
  ORTHREGA  &517  &256      &29 &20.97357      &6.1036e-09   &30   &29            \\
  ORTHREGB  &27   &6        &3  &0.002302       &9.3454e-11   &4    &3             \\
  ORTHREGC  &1005 &500      &5  &24.67617      &2.6533e-09   &6    &5             \\
  POWELLBS  &2    &2        &12 &0.007841       &7.9716e-09   &13   &12            \\
  POWELLSQ  &2    &2        &27 &0.019013       &3.3565e-12   &28   &27            \\
  RECIPE    &3    &3        &2  &0.001219       &8.9015e-16   &3    &2             \\
  RSNBRNE   &2    &2        &12 &0.007700       &0.0000e+0    &13   &12            \\
  \textcolor{blue}{S216}      &2    &1        &7  &0.003152       &3.0016e-12   &9    &8             \\
  \textcolor{blue}{S219}      &4    &2        &15 &0.004683       &7.9166e-14   &17   &16            \\
  \textcolor{blue}{S235}      &3    &1        &12 &0.002370       &4.8635e-13   &15   &13            \\
  \textcolor{blue}{S252}      &3    &1        &12 &0.002779       &1.4067e-13   &15   &13            \\
  \textcolor{blue}{S254}      &3    &2        &5  &0.000898       &6.0863e-09   &6    &6            \\
  \textcolor{blue}{S269}      &5    &3        &5  &0.001050       &6.6138e-14   &7    &6             \\
  S316-322  &2    &1        &5  &0.003500       &5.0306e-12   &6    &5             \\
  \textcolor{blue}{S327}      &2    &1        &9  &0.001335       &7.6719e-09   &11   &10            \\
  \textcolor{blue}{S335}      &3    &2        &17 &0.001788       &2.3825e-11   &22   &18            \\
  \textcolor{blue}{S336}      &3    &2        &8  &0.001016       &5.8287e-16   &10   &9             \\ 
  \textcolor{blue}{S338}      &3    &2        &7  &0.000981       &3.4717e-13   &8    &8             \\ 
  \textcolor{blue}{S344}      &3    &1        &22 &0.002467       &4.3874e-09   &24   &23            \\ 
  \textcolor{blue}{S345}      &3    &1        &11 &0.001545       &7.4406e-12   &13   &12            \\ 
  \textcolor{blue}{S373}      &9    &6        &9  &0.001929       &4.8822e-11   &11   &10            \\ 
  \textcolor{blue}{S378}      &10   &3        &13 &0.001913       &9.8054e-10   &14   &14            \\ 
  \textcolor{blue}{S394}      &20   &1        &13 &0.003302       &5.3389e-09   &15   &14            \\ 
  \textcolor{blue}{S395}     &50   &1        &12 &0.007564       &3.2195e-09   &13   &13            \\ 
  SINVALNE  &2    &2        &2  &0.000657       &5.4133e-15   &3    &2              \\
  TRIGGER   &7    &6        &7  &0.003478       &6.2235e-10   &8    &7              \\
  \textcolor{blue}{YATP1SQ}   &2600 &2600     &4  &11.198148      &2.6160e-12   &5    &5              \\
  ZANGWIL3  &3    &3        &1  &0.000795       &6.1535e-15   &2    &1              \\
  \hline
\end{longtable}
}
\end{center}

\begin{center}
\begin{longtable}{
  >{\centering\arraybackslash}m{1.2cm}  
  >{\centering\arraybackslash}m{1.0cm}  
  >{\centering\arraybackslash}m{0.8cm}  
  >{\centering\arraybackslash}m{0.8cm}  
  >{\centering\arraybackslash}m{0.8cm}  
  >{\centering\arraybackslash}m{0.8cm}  
  >{\centering\arraybackslash}m{0.8cm}  
  >{\centering\arraybackslash}m{0.8cm}  
}
\caption{\textcolor{blue}{Comparison results with A-SQP in \cite{2002JOTA} for Example 4.1.}}\label{2002JOTAcomparison}\\
\hline
\multirow{2}{*}{Algorithm} & \multirow{2}{*}{$\hat{n}$} & \multicolumn{2}{c}{$\sigma = 1.0$} & \multicolumn{2}{c}{$\sigma = 0.1$} & \multicolumn{2}{c}{$\sigma = 0.01$} \\
\cmidrule{3-4} \cmidrule{5-6} \cmidrule{7-8}
& & $N_f$ & $N_g$ & $N_f$ & $N_g$ & $N_f$ & $N_g$ \\
\endfirsthead
\multicolumn{8}{l}{Table \ref{2002JOTAcomparison} continued}\\
\hline
\multirow{2}{*}{Method} & \multirow{2}{*}{$\hat{n}$} & \multicolumn{2}{c}{$\sigma = 1.0$} & \multicolumn{2}{c}{$\sigma = 0.1$} & \multicolumn{2}{c}{$\sigma = 0.01$} \\
\cmidrule{3-4} \cmidrule{5-6} \cmidrule{7-8}
& & $N_f$ & $N_g$ & $N_f$ & $N_g$ & $N_f$ & $N_g$ \\
\hline
\endhead
\hline
A-SQP& 10     & 26 & 10 & 7  & 3 & 7  & 3 \\
& $10^2$ & 9  & 4  & 7  & 3 & 7  & 3 \\
& $10^3$ & 7  & 3  & 7  & 3 & 7  & 3 \\
& $10^4$ & 7  & 3  & 7  & 3 & 7  & 3 \\
   & Sum & 49 & 20 & 28 & 12& 28 & 12 \\
\cmidrule{2-8}
$\mathrm{SSARC_{q}K}$& 10     & 13  & 13  & 15  & 14 & 9   & 9 \\
& $10^2$ & 11  & 10  & 10  & 10 & 15  & 14 \\
& $10^3$ & 16  & 13  & 14  & 11 & 14  & 12 \\
& $10^4$ & 15  & 13  & 19  & 15 & 18  & 14 \\
   & Sum & 55  & 49  & 58  & 50 & 56  & 49 \\
\hline
\end{longtable}
\end{center}

\begin{center}
{
\begin{longtable}{lccccccc}  %
\caption{\textcolor{blue}{Comparison results with A-SQP in \cite{2002JOTA} for Examples 4.2.}}\label{comparison2002ex42}\\
\hline
\multirow{2}{*}{Problem} & \multicolumn{3}{c}{$\mathrm{SSARC_{q}K}$} && \multicolumn{3}{c}{A-SQP} \\
\cmidrule{2-4} \cmidrule{6-8}
& $N_f$ & $N_g$ & Res && $N_f$ & $N_g$ & Res \\
\hline
\endfirsthead
\multicolumn{8}{l}{Table \ref{comparison2002ex42} continued}\\
\hline
\multirow{2}{*}{Problem} & \multicolumn{3}{c}{$\mathrm{SSARC_{q}K}$} && \multicolumn{3}{c}{A-SQP} \\
\cmidrule{2-4} \cmidrule{6-8}
& $N_f$ & $N_g$ & Res && $N_f$ & $N_g$ & Res \\
\hline
\endhead
\hline
\multirow{2}{*}{Problem} & \multicolumn{3}{c}{$\mathrm{SSARC_{q}K}$} && \multicolumn{3}{c}{A-SQP} \\
\cmidrule{2-4} \cmidrule{6-8}
& $n_f$ & $n_g$ & $\|\nabla V\|$ && $n_f$ & $n_g$ & $\|\nabla V\|$ \\
\hline
\endfoot
\hline
\endlastfoot
HS6   &  7  &  7  & 2.1e-07  && 16  & 11  & 1.1e-30 \\
HS26  &  9  &  9  & 8.1e-06  && 19  & 10  & 4.2e-06 \\
HS42  &  3  &  3  & 9.3e-09  && 105 & 18  & 8.4e-03 \\
HS47  & 16  & 15  & 6.1e-06  && 12  & 8   & 4.8e-11 \\
HS65  & 13  & 12  & 1.5e-06  && 26  & 12  & 4.2e-06 \\
HS77  & 10  & 10  & 9.0e-06  && 12  & 5   & 8.6e-06 \\
HS79  & 5   & 5   & 3.3e-07  && 10  & 5   & 8.5e-07 \\
S216  & 10  & 9   & 1.0e-11  && 26  & 18  & 2.0e-08 \\
S235  & 15  & 13  & 5.0e-08  && 19  & 10  & 5.5e-06 \\
\textcolor{blue}{S249}  & 10  & 10  & 3.8e-06  && 10  & 4   & 1.5e-06 \\
S252  & 15  & 13  & 5.0e-08  && 33  & 17  & 9.1e-06 \\
S316  &  5  & 5   & 7.6e-11  && 68  & 30  & 7.9e-06 \\
S317  & 7   & 6   & 1.1e-06  && 78  & 46  & 9.9e-06 \\
S318  &  6  & 6   & 1.0e-06  && 84  & 49  & 8.7e-06 \\
S327  & 11  & 10  & 7.6e-09  && 12  & 5   & 8.7e-07 \\
\textcolor{blue}{S337}  &  5  & 5   & 5.3e-07  && 48  & 15  & 9.8e-06 \\
S344  & 15  & 15  & 5.1e-06  && 15  & 7   & 4.5e-06 \\
S345  & 27  & 22  & 5.3e-06  && 27  & 12  & 7.7e-06 \\
S373  & 12  & 11  & 8.4e-07  && 62  & 32  & 1.1e-09 \\
\hline
Sum  & 201  & 186 &          && 682 & 314 &
\end{longtable}
}
\end{center}

\begin{center}
	{
		\begin{longtable}{lcccccccc}
			\caption{\textcolor{blue}{Comparison results with Algorithm 2.2 in \cite{Liu2011SIOPT}.}}\label{Liu2011comparison}\\
			\endfirsthead
			\multicolumn{9}{l}{Table \ref{Liu2011comparison} continued}\\
			\hline
			\multirow{2}{*}{Problem} & \multicolumn{2}{c}{Dimension}  && \multicolumn{2}{c}{Algorithm 2.2 \cite{Liu2011SIOPT}} &\multirow{2}{*}{ } & \multicolumn{2}{c}{$\mathrm{SSARC_qK}$}\\
			\cmidrule{2-3}
			\cmidrule{5-6}
			\cmidrule{8-9}
			&$n$&$p$  & &$N_f$,$N_c$&$N_g$,$N_j$ & &$N_f$,$N_c$&$N_g$,$N_j$ \\
			\hline
			\endhead
			\hline
			\multirow{2}{*}{Problem} & \multicolumn{2}{c}{Dimension}  && \multicolumn{2}{c}{Algorithm 2.2 \cite{Liu2011SIOPT}} && \multicolumn{2}{c}{$\mathrm{SSARC_qK}$}\\
			\cmidrule{2-3}
			\cmidrule{5-6}
			\cmidrule{8-9}
			&$n$&$p$  & &$N_f$,$N_c$&$N_g$,$N_j$ & &$N_f$,$N_c$&$N_g$,$N_j$ \\
			\hline
			AIRCRFTA  &8 &5  &&3  &3  &&2  &2\\
			BDVALUE  &502 &500  &&3  &2  &&2  &2\\
			BDVALUES  &12 &10  &&10  &10  &&13  &13\\
			BOOTH  &2 &2  &&2  &2  &&2  &2\\
			BRATU2D  &484 &400  &&2  &2  &&3  &3\\
			BRATU2DT  &484 &400  &&2  &2  &&3  &3\\
			BRATU3D  &1000 &512  &&2  &2  &&3  &3\\
			BT1  &2 &1  &&11  &8  &&6  &6\\
			BT3  &5 &3  &&8  &8  &&3  &3\\
			BT4  &3 &2  &&14  &14  &&6  &6\\
			BT5  &3 &2  &&9  &9  &&5  &5\\
			BT6  &5 &2  &&30  &29  &&10  &10\\
			BT8  &5 &2  &&11  &11  &&10  &10\\
			BT9  &4 &2  &&57  &41  &&8  &8\\
			BT10  &2 &3  &&8  &8  &&7  &7\\
			BT11  &5 &3  &&13  &13  &&7  &7\\
			BT12  &5 &4  &&9  &8  &&4  &4\\
			CBRATU2D  &512 &392  &&2  &2  &&3  &3\\
			CBRATU3D  &686 &250  &&3  &3  &&4  &4\\
			CLUSTER  &2 &2  &&8  &8  &&8  &8\\
			CUBENE  &2 &2  &&12  &5  &&6  &5\\
			DECONVNE  &61 &40  &&4  &4  &&2  &2\\
			GENHS28  &10 &8  &&9  &9  &&2  &2\\
			GOTTFR  &2 &2  &&9  &6  &&7  &7\\
			HATFLDG  &25 &25  &&25  &7  &&7  &7\\
			HIMMELBA  &2 &2  &&2  &2  &&2  &2\\
			HIMMELBC  &2 &2  &&7  &6  &&5  &5\\
			HIMMELBE  &3 &3  &&3  &3  &&3  &3\\
			HS6  &2 &1  &&14  &11  &&7  &7\\
			HS7  &2 &1  &&12  &12  &&7  &7\\
			HS8  &2 &2  &&6  &5  &&5  &4\\
			HS9  &2 &1  &&7  &7  &&7  &6\\
			HS26  &3 &1  &&36  &26  &&9  &9\\
			HS28  &3 &1  &&10  &9  &&3  &3\\
			HS39  &4 &2  &&57  &41  &&8  &8\\
			HS40  &4 &3  &&7  &7  &&3  &3\\
			HS42  &4 &2  &&11  &9  &&3  &3\\
			HS46  &5 &2  &&29  &27  &&16  &16\\
			HS48  &5 &2  &&13  &10  &&3  &3\\
			HS49  &5 &2  &&27  &22  &&16  &16\\
			HS50  &5 &3  &&25  &15  &&8  &8\\
			HS51  &5 &3  &&10  &9  &&3  &2\\
			HS52  &5 &3  &&8  &7  &&3  &2\\
			HS61  &3 &2  &&13  &11  &&6  &6\\
			HS77  &5 &2  &&29  &26  &&10  &10\\
			HS78  &5 &3  &&9  &9  &&8  &8\\
			HS79  &5 &3  &&13  &13  &&5  &5\\
			HS100LNP  &7 &2  &&79  &35  &&8  &8\\
			HYPCIR  &2 &2  &&6  &5  &&5  &4\\
			INTEGREQ  &102 &100  &&2  &2  &&3  &3\\
			MARATOS  &2 &1  &&5  &5  &&4  &4\\
			METHANB8  &31 &31  &&3  &3  &&4  &3\\
			ORTHREGB  &27 &6  &&7  &7  &&3  &3\\
			RECIPE  &2 &2  &&12  &12  &&3  &2 \\
			RSNBRNE  &2 &2  &&36  &10  &&13  &12\\
			S316-322  &2 &1  &&9  &8  &&5  &5\\
			SINVALNE  &2 &2  &&35  &10  &&3  &2\\
			TRIGGER  &7 &6  &&8  &8  &&8  &7 \\
			ZANGWIL3  &3 &3  &&2  &2  &&2  &1\\ 			
			\hline
			Sum  &  &   &&818  &610  &&334  &322\\ 			
			\hline
		\end{longtable}
	}
\end{center}

\begin{center}
{
\begin{longtable}{cccccllcllll}
\caption{\textcolor{blue}{Comparison results with the variant of Algorithm 2.1 \cite{Chen2020COAP} by using objective function}}\label{tabchenobj}\\
\endfirsthead
\multicolumn{10}{l}{Table \ref{tabchenobj} continued}\\
\hline
\multirow{2}{*}{$\rho$} & \multirow{2}{*}{$t$}  && \multicolumn{3}{c}{Algorithm 2.1 \cite{Chen2020COAP}} &\multirow{2}{*}{ } & \multicolumn{3}{c}{$\mathrm{SSARC_{q}K}$}\\
\cmidrule{4-6}
\cmidrule{8-10}
&& &$N_{it}$&$N_f$&Res & &$N_{it}$&$N_f$&Res \\
\hline
\endhead
\hline
\multirow{2}{*}{$\rho$} & \multirow{2}{*}{$t$}  && \multicolumn{3}{c}{Algorithm 2.1 \cite{Chen2020COAP}} && \multicolumn{3}{c}{$\mathrm{SSARC_{q}K}$}\\
\cmidrule{4-6}
\cmidrule{8-10}
&  & &$N_{it}$&$N_f$&Res  & &$N_{it}$&$N_f$&Res \\
\hline
 2         &$\pm10^{-1}$  &&7   &12  &1.2000e-20 &&7 &7 &5.8387e-12 \\
           &$\pm10^{-2}$  &&4   &10  &5.7000e-12 &&7 &7 &6.2791e-13 \\
           &$\pm10^{-3}$  &&5   &15  &8.8000e-16 &&7 &7 &6.2836e-14 \\
           &$\pm10^{-4}$  &&7   &32  &4.3000e-11 &&7 &7 &6.2836e-15 \\
           &$\pm10^{-5}$  &&3   &18  &5.1000e-12 &&7 &7 &6.2836e-16 \\
           & Sum          &&26  &87  &           &&35&35&  \\
  \cmidrule{2-10}
 10        &$\pm10^{-1}$  &&27  &33  &7.5535e-12 &&58&59&9.1573e-11 \\
           &$\pm10^{-2}$  &&11  &18  &2.7445e-13 &&34&34&3.0102e-11 \\
           &$\pm10^{-3}$  &&17  &54  &2.5558e-22 &&34&35&2.0186e-11 \\
           &$\pm10^{-4}$  &&26  &133 &5.9308e-12 &&27&28&2.0778e-11 \\
           &$\pm10^{-5}$  &&11  &68  &6.6169e-14 &&26&27&9.6110e-11 \\
           & Sum          &&92  &306 &           &&179&183&  \\
  \cmidrule{2-10}                                                   
 100       &$\pm10^{-1}$  &&108 &126 &1.2644e-23 &&F &F &4.9581e-06 \\
           &$\pm10^{-2}$  &&167 &517 &2.1458e-12 &&F &F &1.4364e-06 \\
           &$\pm10^{-3}$  &&F   &F   &2.1884e-04 &&F &F &9.0358e-07 \\
           &$\pm10^{-4}$  &&103 &620 &1.7097e-13 &&F &F &2.1818e-06 \\
           &$\pm10^{-5}$  &&F   &F   &1.0992e-07 &&F &F &3.9074e-06 \\
           & Sum          &&378+2F&1263+2F&      &&5F&5F&  \\                                                          
\hline
\end{longtable}
}
\end{center}

\begin{center}
{
\begin{longtable}{cccccllcllll}
\caption{\textcolor{blue}{Comparison results with the variant of Algorithm 2.1 in \cite{Chen2020COAP} by using Lagrangian function}}\label{tabchenlag}\\
\endfirsthead
\multicolumn{10}{l}{Table \ref{tabchenlag} continued}\\
\hline
\multirow{2}{*}{$\rho$} & \multirow{2}{*}{$t$}  && \multicolumn{3}{c}{Algorithm 2.1 \cite{Chen2020COAP}} &\multirow{2}{*}{ } & \multicolumn{3}{c}{$\mathrm{SSARC_{q}K}$}\\
\cmidrule{4-6}
\cmidrule{8-10}
&& &$N_{it}$&$N_f$&Res & &$N_{it}$&$N_f$&Res \\
\hline
\endhead
\hline
\multirow{2}{*}{$\rho$} & \multirow{2}{*}{$t$}  && \multicolumn{3}{c}{Algorithm 2.1 \cite{Chen2020COAP}} && \multicolumn{3}{c}{$\mathrm{SSARC_{q}K}$}\\
\cmidrule{4-6}
\cmidrule{8-10}
&  & &$N_{it}$&$N_f$&Res  & &$N_{it}$&$N_f$&Res \\
\hline    
 2         &$\pm10^{-1}$  &&4 &5 &2.7200e-34 &&5 &5 &4.2130e-15 \\
           &$\pm10^{-2}$  &&3 &4 &3.2500e-31 &&4 &4 &4.6814e-17 \\
           &$\pm10^{-3}$  &&2 &3 &2.5000e-13 &&3 &3 &2.5002e-13  \\
           &$\pm10^{-4}$  &&2 &3 &6.7700e-28 &&3 &3 &9.9841e-16  \\
           &$\pm10^{-5}$  &&2 &3 &0.0000e+00 &&2 &2 &1.0000e-10  \\ 
           & Sum          &&13&18&           &&17&17&            \\
\cmidrule{2-10}                                                            
 10        &$\pm10^{-1}$  &&4 &5 &5.9346e-33 &&5 &5 &4.2130e-15 \\
           &$\pm10^{-2}$  &&3 &4 &1.7347e-30 &&4 &4 &1.2192e-14 \\
           &$\pm10^{-3}$  &&2 &3 &2.5002e-13 &&3 &3 &2.6606e-12 \\
           &$\pm10^{-4}$  &&2 &3 &1.0164e-27 &&3 &3 &3.1570e-14 \\
           &$\pm10^{-5}$  &&2 &3 &3.8118e-30 &&2 &2 &9.9999e-11 \\  
           & Sum          &&13&18&           &&17&17&            \\
\cmidrule{2-10}                                                               
 100       &$\pm10^{-1}$  &&4 &5 &3.9013e-32 &&5 &5 &1.4371e-16 \\
           &$\pm10^{-2}$  &&3 &4 &4.6620e-29 &&4 &4 &1.2192e-14 \\
           &$\pm10^{-3}$  &&2 &3 &2.5002e-13 &&3 &3 &2.6606e-12 \\
           &$\pm10^{-4}$  &&2 &3 &5.8276e-27 &&3 &3 &3.1570e-14 \\
           &$\pm10^{-5}$  &&2 &3 &5.4940e-29 &&2 &2 &9.9999e-11 \\  
           & Sum          &&13&18&           &&17&17&            \\
\cmidrule{2-10}                                                               
 1000      &$\pm10^{-1}$  &&4 &5 &2.2068e-31 &&5 &5 &4.2130e-15 \\
           &$\pm10^{-2}$  &&3 &4 &4.1973e-28 &&4 &4 &1.2192e-14  \\
           &$\pm10^{-3}$  &&2 &3 &2.5002e-13 &&3 &3 &2.6606e-12  \\
           &$\pm10^{-4}$  &&2 &3 &1.0937e-25 &&3 &3 &3.1570e-14  \\
           &$\pm10^{-5}$  &&2 &3 &1.1668e-28 &&2 &2 &9.9999e-11 \\ 
            & Sum          &&13&18&           &&17&17&            \\
\hline
\end{longtable}
}
\end{center}

\begin{center}
{
\begin{longtable}{lcccclllclll}
\caption{\textcolor{blue}{Comparison results with Algorithm 2.1 in \cite{Chen2020COAP}}}\label{tab2020chen}\\
\endfirsthead
\multicolumn{10}{l}{Table \ref{tab2020chen} continued}\\
\hline
\multirow{2}{*}{Problem} & \multicolumn{2}{c}{Dimension}  && \multicolumn{3}{c}{Algorithm 2.1 \cite{Chen2020COAP}} &\multirow{2}{*}{ } & \multicolumn{2}{c}{$\mathrm{SSARC_{q}K}$}\\
\cmidrule{2-3}
\cmidrule{5-7}
\cmidrule{9-11}
&$n$&$p$  & &$N_f$&$N_c$&Res & &$N_f$,$N_c$&Res \\
\hline
\endhead
\hline
\multirow{2}{*}{Problem} & \multicolumn{2}{c}{Dimension}  && \multicolumn{3}{c}{Algorithm 2.1 \cite{Chen2020COAP}} && \multicolumn{2}{c}{$\mathrm{SSARC_{q}K}$}\\
\cmidrule{2-3}
\cmidrule{5-7}
\cmidrule{9-11}
&$n$&$p$  & &$N_f$&$N_c$&Res & &$N_f,N_c$&Res \\
\hline
 AIRCRFTA  &8    &5    &&12  &21  &4.5003e-06 &&4  &2.2215e-16 \\
 ARGTRIG   &200  &200  &&4   &4   &8.1520e-10 &&4  &8.1517e-10 \\
 ARTIF     &502  &500  &&9   &15  &4.6233e-06 &&7  &1.4900e-09 \\
 AUG2D     &703  &300  &&4   &4   &1.6444e-09 &&6  &2.4374e-11 \\
 AUG2DC    &703  &300  &&4   &4   &1.6444e-09 &&12 &1.2970e-09  \\
 BDVALUE   &102  &100  &&4   &6   &2.2861e-06 &&3  &7.3096e-09 \\
 BDVALUES  &102  &100  &&20  &25  &4.9060e-06 &&13 &2.2229e-10 \\
 BOOTH     &2    &2    &&2   &2   &1.6997e-10 &&3  &4.4409e-16 \\
 BRATU2D   &484  &400  &&6   &10  &7.1482e-06 &&3  &7.4054e-09 \\
 BRATU2DT  &484  &400  &&6   &10  &9.0672e-06 &&4  &3.6233e-14 \\
 BRATU3D   &125  &27   &&10  &18  &3.1612e-06 &&4  &1.6127e-13 \\
 BROWNALE  &200  &200  &&7   &7   &1.1686e-09 &&9  &2.0178e-10 \\
 BROYDN3D  &500  &500  &&14  &14  &$I $       &&5  &1.0741e-09 \\
 BT1       &2    &1    &&8   &9   &5.4160e-09 &&6  &7.1663e-09 \\
 BT2       &3    &1    &&11  &11  &4.7125e-08 &&12 &9.0428e-14 \\
 BT3       &5    &3    &&6   &6   &1.0082e-09 &&5  &8.9598e-10 \\
 BT4       &3    &2    &&9   &9   &2.2185e-07 &&7  &7.2720e-14 \\
 BT5       &3    &2    &&5   &5   &3.1301e-08 &&5  &8.9746e-09 \\
 BT6       &5    &2    &&14  &14  &2.3299e-08 &&11 &5.4852e-09 \\
 BT7       &5    &3    &&16  &18  &5.5488e-06 &&13 &3.0271e-14 \\
 BT8       &5    &2    &&12  &19  &3.2450e-11 &&15 &5.2684e-09 \\
 BT9       &4    &2    &&13  &15  &2.3820e-07 &&9  &5.2087e-13 \\
 BT10      &2    &2    &&7   &7   &4.4039e-09 &&7  &4.4039e-09 \\
 BT11      &5    &3    &&8   &8   &7.9708e-09 &&8  &1.4334e-11 \\
 BT12      &5    &3    &&7   &8   &3.3889e-07 &&5  &2.3846e-13 \\
 BYRDSPHR  &3    &2    &&10  &13  &3.8304e-11 &&9  &1.6387e-13 \\
 CBRATU2D  &98   &50   &&9   &16  &4.2945e-06 &&4  &1.5569e-15 \\
 CBRATU3D  &128  &16   &&4   &4   &3.8108e-12 &&4  &3.8684e-13 \\
 CHANDHEU  &100  &100  &&11  &11  &6.3736e-06 &&16 &9.5858e-09 \\
 CLUSTER   &2    &2    &&8   &8   &7.0656e-06 &&10 &9.7423e-12 \\
 CUBENE    &2    &2    &&14  &18  &2.8906e-12 &&6  &0.0000e+0 \\
 DECONVNE  &61   &40   &&2   &2   &1.3050e-10 &&3  &1.2385e-15 \\
 DTOC1L    &598  &396  &&3   &3   &4.2487e-14 &&10 &1.6970e-09 \\
 DTOC2     &598  &396  &&17  &27  &2.5577e-06 &&8  &1.5442e-10 \\
 DTOC3     &299  &198  &&4   &4   &8.8177e-09 &&6  &2.8069e-10 \\
 DTOC4     &299  &198  &&3   &3   &2.6114e-07 &&3  &4.9860e-10 \\
 DTOC5     &19   &9    &&4   &4   &2.2816e-08 &&3  &3.9108e-09 \\
 EIGENA2   &6    &3    &&3   &3   &2.4252e-12 &&5  &1.1494e-10 \\
 EIGENAU   &110  &110  &&7   &7   &3.8718e-10 &&7  &1.4071e-12 \\
 EIGENB2   &6    &3    &&9   &9   &1.2406e-06 &&15 &1.6610e-10 \\
 EIGENC2   &30   &15   &&11  &11  &2.3420e-08 &&11 &6.0021e-09 \\
 FLOSP2TL  &363  &323  &&9   &9   &3.6044e-06 &&6  &5.3968e-11 \\
 GENHS28   &10   &8    &&3   &3   &1.2483e-14 &&4  &6.4625e-11 \\
 GOTTFR    &2    &2    &&9   &14  &2.1680e-10 &&8  &1.1392e-09 \\
 GRIDNETB  &180  &100  &&27  &27  &8.4083e-06 &&9  &1.0438e-09 \\
 GRIDNETE  &180  &100  &&20  &20  &7.5909e-06 &&9  &5.8997e-10 \\
 HAGER1    &1001 &500  &&2   &2   &3.6216e-11 &&3  &9.2810e-09 \\
 HAGER2    &1001 &500  &&6   &6   &1.3610e-09 &&4  &1.5798e-09 \\
 HAGER3    &1001 &500  &&14  &14  &8.8200e-06 &&4  &1.4325e-09 \\
 HATFLDF   &3    &3    &&28  &43  &3.0032e-09 &&7  &2.8311e-12 \\
 HATFLDG   &25   &25   &&8   &9   &4.8314e-06 &&9  &4.7181e-15 \\
 HEART6    &6    &6    &&72  &77  &6.2200e-07 &&29 &6.2690e-09 \\
 HEART8    &8    &8    &&48  &66  &3.5184e-09 &&12 &2.1299e-14 \\
 HIMMELBA  &2    &2    &&3   &3   &1.4965e-10 &&2  &0.0000e+0 \\
 HIMMELBC  &2    &2    &&5   &5   &6.8086e-06 &&6  &0.0000e+0 \\
 HIMMELBE  &3    &3    &&3   &3   &6.2354e-11 &&3  &0.0000e+0 \\
 HS6       &2    &1    &&10  &13  &3.5271e-10 &&8  &5.9686e-13 \\
 HS7       &2    &1    &&9   &11  &7.0501e-11 &&8  &1.5913e-10 \\
 HS8       &2    &2    &&5   &5   &7.8113e-08 &&5  &2.2061e-12 \\
 HS9       &2    &1    &&4   &4   &6.5239e-08 &&7  &1.7298e-09 \\
 HS26      &3    &1    &&19  &19  &6.4478e-06 &&17 &6.9605e-09 \\
 HS27      &3    &1    &&23  &25  &9.9924e-06 &&11 &7.4560e-09 \\
 HS28      &3    &1    &&2   &2   &1.7764e-15 &&4  &7.0407e-12 \\
 HS39      &4    &2    &&13  &15  &2.3820e-07 &&9  &5.6164e-11 \\
 HS40      &4    &3    &&4   &4   &8.6843e-10 &&4  &3.4209e-10 \\
 HS42      &4    &2    &&4   &4   &5.2056e-09 &&4  &9.3173e-09 \\
 HS46      &5    &2    &&17  &17  &5.4805e-06 &&23 &7.6893e-09 \\
 HS47      &5    &3    &&18  &18  &3.2632e-06 &&20 &5.4288e-09 \\
 HS48      &5    &2    &&3   &3   &1.7311e-15 &&4  &8.6179e-13 \\
 HS49      &5    &2    &&15  &15  &4.0666e-06 &&22 &8.3110e-09 \\
 HS50      &5    &3    &&9   &9   &4.4177e-12 &&10 &3.5567e-13 \\
 HS51      &5    &3    &&2   &2   &3.1426e-15 &&3  &4.3252e-09 \\
 HS52      &5    &3    &&3   &3   &2.5670e-10 &&3  &3.4710e-09 \\
 HS56      &7    &4    &&9   &9   &3.5344e-06 &&13 &3.5278e-09 \\
 HS61      &3    &2    &&6   &6   &3.1253e-10 &&7  &1.3609e-11 \\
 HS77      &5    &2    &&16  &16  &7.9904e-09 &&11 &8.9764e-10 \\
 HS78      &5    &3    &&5   &5   &9.7961e-11 &&11 &2.8086e-09 \\
 HS79      &5    &3    &&5   &5   &4.5896e-09 &&6  &1.1567e-11 \\
 HS100LNP  &7    &2    &&8   &10  &8.4215e-06 &&9  &2.4747e-12 \\
 HS111LNP  &10   &3    &&12  &13  &2.7335e-07 &&13 &1.3675e-10 \\
 HYDCAR20  &99   &99   &&223 &235 &9.3669e-06 &&10 &1.3560e-11 \\
 HYDCAR6   &29   &29   &&73  &84  &1.6698e-10 &&6  &5.9618e-14 \\
 HYPCIR    &2    &2    &&5   &6   &1.6698e-10 &&5  &1.9989e-10 \\
 INTEGREQ  &102  &100  &&2   &2   &1.8756e-10 &&4  &3.9141e-14 \\
 JUNKTURN  &510  &350  &&25  &34  &4.7652e-09 &&11 &1.3013e-09 \\
 MARATOS   &2    &1    &&4   &4   &1.5854e-08 &&4  &1.7235e-09 \\
 METHANB8  &31   &31   &&3   &3   &1.2763e-07 &&4  &1.9401e-13 \\
 METHANL8  &31   &31   &&8   &8   &9.0804e-08 &&5  &5.2325e-11 \\
 MSQRTA    &100  &100  &&30  &34  &4.9768e-09 &&7  &2.5112e-14 \\
 MSQRTB    &100  &100  &&44  &51  &2.1423e-09 &&7  &2.5602e-14 \\
 MWRIGHT   &5    &3    &&8   &8   &1.5122e-09 &&6  &3.6934e-11 \\
 OPTCTRL3  &302  &200  &&21  &21  &3.5950e-07 &&28 &6.4662e-09 \\
 OPTCTRL6  &302  &200  &&21  &21  &3.5950e-07 &&23 &3.1631e-09 \\
 ORTHREGA  &517  &256  &&16  &27  &4.5780e-06 &&30 &6.1036e-09 \\
 ORTHREGB  &27   &6    &&25  &30  &7.7097e-07 &&4  &9.3454e-11 \\
 ORTHREGC  &1005 &500  &&12  &12  &3.9966e-06 &&6  &2.6533e-09 \\
 POWELLBS  &2    &2    &&26  &30  &2.2353e-07 &&13 &7.9716e-09 \\
 POWELLSQ  &2    &2    &&12  &15  &9.2826e-08 &&28 &3.3565e-12 \\
 RECIPE    &3    &3    &&12  &12  &6.0225e-06 &&3  &8.9015e-16 \\
 RSNBRNE   &2    &2    &&18  &26  &1.1040e-12 &&13 &0.0000e+0 \\
 S316-322  &2    &1    &&5   &5   &5.9127e-07 &&6  &5.0306e-12 \\
 SINVALNE  &2    &2    &&16  &20  &3.0518e-12 &&3  &5.4133e-15 \\
 TRIGGER   &7    &6    &&13  &24  &3.0015e-06 &&8  &6.2235e-10 \\
 ZANGWIL3  &3    &3    &&6   &6   &4.9115e-09 &&2   &6.1535e-15 \\
\hline
 Sum  &     &    &&1443   &1664   &  &&883   &  \\
\hline
\end{longtable}
}
\end{center}
\bibliography{cites}   
\bibstyle{sn-aps}

\end{document}